\documentclass[mnsc,nonblindrev]{oo} 

\OneAndAHalfSpacedXII 

\usepackage{amsmath}
\usepackage{url}
\usepackage{enumitem}
\usepackage[english]{babel}
\usepackage{bbm}
\usepackage{bm}
\usepackage{comment}
\usepackage{hyperref} 

\newcommand{\R}{\mathbb{R}}

\newcommand{\bsubeq}{\begin{subequations}}
\newcommand{\esubeq}{\end{subequations}}
\newcommand{\BI}{\begin{itemize}}
\newcommand{\EI}{\end{itemize}}
\newcommand{\I}{\item}

\newcommand{\cgreen}{\color{black}}
\newcommand{\cred}{\color{red!85!black}}

\newcommand{\mb}{\mathbb}

\usepackage{graphicx}
\usepackage{enumerate}
\usepackage{subfigure}
\usepackage[normalem]{ulem}

\renewcommand{\S}{\mathcal{S}} 

\newcommand{\vpi}{\boldsymbol{\pi}}

\usepackage{longtable}
\newcommand{\sm}{\setminus}

\usepackage{mathtools}

\usepackage{multirow}
\usepackage{graphicx}
\usepackage{caption}
\usepackage{breqn}
\allowdisplaybreaks

\renewcommand{\mc}{\mathcal}

\newcommand{\G}{\mc{G}}
\newcommand{\T}{\mc{T}}
\newcommand{\X}{\mc{X}}

\newcommand{\bqp}{\text{MBQP}}

\newcommand{\sdp}{\text{SDP}}
\newcommand{\cpp}{\text{CPP}}

\newcommand{\uc}{\mathcal{UC}}

\renewcommand{\a}{\mathbf{a}}
\newcommand{\aphi}{\a}
\newcommand{\aphitop}{\a^{\top}}

\newcommand{\x}{\mathbf{x}}

\newcommand{\opt}{\text{opt}}
\newcommand{\rel}{\text{rel}}
\newcommand{\feas}{\text{Feas}}
\newcommand{\tr}{\text{Tr}}

\newcommand{\pmax}{P^{\max}_g}
\newcommand{\pmin}{P^{\min}_g}

\usepackage{placeins}
\newcommand{\minup}{\Delta^{\rm{Up}}_{g}}
\newcommand{\mindn}{\Delta^{\rm{Down}}_{g}}
\newcommand{\rup}{R^{\rm{U}}_g}
\newcommand{\rdn}{R^{\rm{D}}_g}

\newcommand{\pmbqp}{\mc{P}^{\rm{MBQP}}}
\newcommand{\pcpp}{\mc{P}^{\rm{CPP}}}
\newcommand{\psdp}{\mc{P}^{\rm{SDP}}}

\newcommand{\mbqp}{\rm{MBQP}}
\newcommand{\vrd}{\mathbf{r}^d}

\newcommand{\vo}{\textbf{0}}
\newcommand{\va}{\mathbf{a}}

\newcommand{\vc}{\textbf{c}}

\newcommand{\vr}{\mathbf{r}}
\newcommand{\vu}{\textbf{u}}
\newcommand{\vx}{\mathbf{x}}
\newcommand{\vh}{\mathbf{h}}

\newcommand{\vz}{\textbf{z}}
\renewcommand{\vs}{\mathbf{s}}
\newcommand{\bpm}{\begin{pmatrix}}
\newcommand{\epm}{\end{pmatrix}}

\newcommand{\N}{\mathbb{N}}

\newcommand{\ve}{\mathbf{e}}

\newcommand{\vrel}{V^{\rm{Rel}}}

\newcommand{\vD}{\mathbf{D}}
\newcommand{\zfeas}{z^{\rm{Feas}}}
\newcommand{\vxfeas}{\mathbf{x}^{\rm{Feas}}}
\newcommand{\vsfeas}{\mathbf{s}^{\rm{Feas}}}
\newcommand{\vrfeas}{\mathbf{r}^{\rm{Feas}}}
\renewcommand{\N}{\mc{N}}

\newcommand{\vmax}{V^{\max}_k}
\newcommand{\vmin}{V^{\min}_k}
\newcommand{\smax}{S^{\max}_{ij}}
\newcommand{\vv}{\mathbf{v}}
\newcommand{\vvr}{\mathbf{v}^R}
\newcommand{\vvi}{\mathbf{v}^I}
\renewcommand{\vr}{v^{R}}
\newcommand{\vi}{v^{I}}
\newcommand{\vWr}{\bm{W}^R}
\newcommand{\vWi}{\bm{W}^I}
\newcommand{\vWc}{\bm{W}^{C}}
\newcommand{\Wr}{W^R}
\newcommand{\Wi}{W^I}
\newcommand{\Wc}{W^C}
\newcommand{\Dr}{D^R}
\newcommand{\Di}{D^I}
\newcommand{\img}{\mathbf{j}}
\newcommand{\qmax}{Q^{\max}_g}
\newcommand{\qmin}{Q^{\min}_g}
\newcommand{\sdpbd}{\rm SDP-bd}
\newcommand{\sdpdual}{\rm SDP-dual}
\newcommand{\psdpdual}{\mc{P}^{\rm SDP-dual}}
\newcommand{\psdpbd}{\mc{P}^{\rm SDP-bd}}
\newcommand{\fd}{f^{\rm d}}
\newcommand{\feastar}{\feas^*}
\newcommand{\thetalb}{\underline{\theta}}
\newcommand{\thetaub}{\bar{\theta}}
\newcommand{\ucsdp}{\rm UC-SDP}
\newcommand{\ac}{\mc AC}

\newcommand{\ploss}{p^{\rm Loss}}

\newcommand{\dr}{\rm d-R}
\newcommand{\di}{\rm d-I}
\newcommand{\lambdar}{\lambda^R}
\newcommand{\lambdai}{\lambda^I}

\newcommand{\vuhat}{\hat{\mathbf{u}}}
\newcommand{\vw}{\mathbf{w}}

\newcommand{\Ddotkt}{D^{\cdot}_{kt}}
\renewcommand{\hom}{\rm Hom}
\newcommand{\homdual}{\rm Hom-dual}

\newcommand{\Atilde}{\tilde{\bm{A}}}
\newcommand{\Btilde}{\tilde{\bm{B}}}
\newcommand{\Gtilde}{\tilde{\bm{G}}}

\newcommand{\Ytilde}{\tilde{\bm{Y}}}
\newcommand{\kcone}{\mc{K}_{\rm con}}
\newcommand{\sigmatilde}{\tilde{\sigma}}
\newcommand{\kappatilde}{\tilde{\kappa}}
\newcommand{\phitilde}{\tilde{\phi}}
\newcommand{\Thetatilde}{\tilde{\Theta}}
\newcommand{\Omegatilde}{\tilde{\Omega}}
\newcommand{\mutilde}{\tilde{\mu}}
\newcommand{\vxtilde}{\tilde{\vx}}
\newcommand{\Xtilde}{\tilde{\bm{X}}}

\usepackage{stackengine}
\stackMath

\usepackage{multicol}
\allowdisplaybreaks[4]
\usepackage{soul}
\sethlcolor{yellow}
\def\st{{\rm s.t.}}
\usepackage{calc}
\usepackage{pifont}

\newcommand{\xmark}{\text{\ding{55}}}

\usepackage{xr}
\usepackage{placeins}
\usepackage{xcolor}
\usepackage{booktabs} 
\usepackage{multirow} 
\usepackage{placeins}
\usepackage[ruled]{algorithm2e} 

\usepackage{natbib}
 \bibpunct[, ]{(}{)}{,}{a}{}{,}%
 \def\bibfont{\small}%
 \def\bibsep{\smallskipamount}%
\TheoremsNumberedThrough     
\ECRepeatTheorems

\EquationsNumberedThrough    

\MANUSCRIPTNO{}

\begin{document}


\RUNTITLE{Pricing Discrete and Nonlinear Markets With Semidefinite Relaxations} 

\TITLE{Pricing Discrete and Nonlinear Markets With Semidefinite Relaxations}


\ARTICLEAUTHORS{%
\AUTHOR{Cheng Guo, Lauren Henderson}
\AFF{School of Mathematical and Statistical Sciences, Clemson University, Clemson, South Carolina\\
\EMAIL{cguo2@clemson.edu, lahende@clemson.edu}}
\AUTHOR{Ryan Cory-Wright}
\AFF{Department of Analytics, Marketing and Operations, Imperial Business School, London, UK\\
\EMAIL{r.cory-wright@imperial.ac.uk}} 
\AUTHOR{Boshi Yang}
\AFF{School of Mathematical and Statistical Sciences, Clemson University, Clemson, South Carolina\\
\EMAIL{boshiy@clemson.edu}
}}

\ABSTRACT{
Nonconvexities in markets with discrete decisions and nonlinear constraints make efficient pricing challenging, often necessitating subsidies. A prime example is the unit commitment (UC) problem in electricity markets, where costly subsidies are commonly required. We propose a new pricing scheme for nonconvex markets with both discreteness and nonlinearity, by convexifying nonconvex structures through a semidefinite programming (SDP) relaxation and deriving prices from the relaxation's dual variables. When the choice set is bounded, we establish strong duality for the SDP, which allows us to extend the envelope theorem to the value function of the relaxation. This extension yields a marginal price signal for demand, which we use as our pricing mechanism. We demonstrate that under certain conditions—for instance, when the relaxation's right hand sides are linear in demand—the resulting lost opportunity cost is bounded by the relaxation's optimality gap. This result highlights the importance of achieving tight relaxations. The proposed framework applies to nonconvex electricity market problems, including for both direct current and alternating current UC. Our numerical experiments indicate that the SDP relaxations are often tight, reinforcing the effectiveness of the proposed pricing scheme. Across a suite of IEEE benchmark instances, the lost opportunity cost under our pricing scheme is, on average, 46\% lower than that of the commonly used fixed-binary pricing scheme.
} 




\maketitle

%


\section{Introduction} \label{ch: intro}
In many real-world markets, market participants select their purchase or selling quantities from a well-behaved convex choice set. In such markets, resource-allocation problems can be formulated as tractable optimization problems for which strong duality holds. As a result, market design principles such as social-welfare maximization and incentive compatibility are well justified by the First and Second Welfare Theorems \citep{feldman2006welfare}. Specifically, the welfare theorems guarantee that \textit{if strong duality is attained}, then simultaneously solving a resource allocation problem and its dual yields efficient dispatching of participants. See \cite{ferris2022dynamic} for a contemporary discussion. However, there exist markets whose choice set is nonconvex due to, e.g., discrete decisions and nonlinear feasible constraints, breaking strong duality. Consequently, dual solutions to the resource-allocation optimization problem may be unattainable or fail to support efficient allocation, thereby complicating price formation. 

A classic example is the electricity market, where system operators solve a unit commitment (UC) problem for efficient dispatch \citep{liberopoulos2016critical}. UC introduces discrete generator commitment (i.e., on/off) decisions and is thus nonconvex. Such nonconvexity makes it difficult to obtain prices that support efficient dispatch, necessitating uplift payments that subsidize generators so they are willing to follow an efficient dispatch. Uplift payments are criticized for being discriminatory and thus potentially misalign incentives \citep{schiro2015convex, mays2019quasi}. Recently, challenges posed by nonconvexity have become even more prominent in electricity market design, as there is a growing interest in incorporating alternating current (AC) into market models. Today's electricity market relies on direct current UC (DCUC) in market operations, with linear power-flow constraints. On the other hand, nonlinear AC power flows better capture network physics, and recent advances in the solvability of realistic-scale AC optimal power flow (ACOPF) problems make their practical implementation more promising. Yet, there is relatively little work on market design for ACUC problems, which are nonconvex due to both discrete decisions and nonlinear physical constraints. Beyond UC, nonconvexity structures appear in markets where allocation combines fixed costs, indivisibilities, or nonlinear physical constraints, such as in airport time slot allocation \citep{rassenti1982combinatorial} and in fisheries markets \citep{milgrom2025walrasian}.

Our main contributions are as follows: Our work proposes a computationally tractable pricing scheme for nonconvex markets with both discreteness and nonlinearity. We convexify the nonconvex market-clearing problem via a tight semidefinite programming (SDP) relaxation, and we define prices from this relaxation's dual variables based on the envelope theorem. The incentives under this pricing scheme are measured by the lost opportunity cost (LOC), the profit shortfall relative to the best response at posted prices. We show that tighter relaxations yield lower LOC under certain conditions. Numerical experiments on the electricity market show that our SDP relaxation is tight, and that the LOC under our pricing scheme is on average 46\% lower than under the fixed-binary pricing used in practice. Our framework is sufficiently general to encompass both use cases in the electricity market, namely the currently implemented DCUC model and the emerging ACUC setting. 

Our pricing step is solvable in polynomial time in theory, as it solves an SDP \citep{vandenberghe1996semidefinite}. Moreover, in practice SDPs can be solved efficiently using solvers such as MOSEK \citep{aps2020mosek}. In contrast, existing pricing schemes, such as fixed-binary, convex-hull, and copositive duality pricing as reviewed below, require solving either mixed-integer linear programs (MILPs) or copositive programs in pricing steps, and are thus NP-hard. 

The rest of this paper is organized as follows: Section \ref{ch: backgrnd} introduces the SDP relaxations for mixed-integer quadratic programs and for DCUC. Section \ref{ch: pricing_sdp} develops the envelope theorem for SDP relaxations and, based on this result, proposes a pricing scheme for DCUC. Section \ref{ch: ac} extends the proposed pricing scheme for ACUC. Section \ref{ch: loc} derives a tight upper bound on LOC. Section \ref{sec:pricingscheme} presents numerical comparisons on optimality gap and LOC levels. Proofs not included in the paper are in Sections \ref{ch: proof}-\ref{ch: proof_3} of the Appendix.

\subsection{Literature Review}\label{ch: lit}
Our work relates to the following streams of research in the economics and operations literature: (i) pricing in markets with discrete decisions and nonconvex costs, specifically (ii) pricing schemes in electricity markets modeled by UC, (iii) pricing under nonlinear AC network constraints, and (iv) pricing schemes with performance guarantees in nonconvex electricity markets. In what follows, we provide a detailed review of these works. 

First, a substantial body of work explores pricing in markets with discrete decisions, where standard welfare-theoretic logic underlying marginal-cost pricing fails. In discrete exchange economies, agents' demand sets are typically nonconvex, which can lead to the non-existence of competitive equilibria under standard linear pricing \cite{gale1984equilibrium}. Related work characterizes how indivisibilities generate complementarities across items \citep{bevia1999buying}. A key finding is that efficient allocations can often be recovered when agents’ preferences satisfy discrete convexity conditions: under such structures, linear prices may support efficient allocations \citep{danilov2001discrete,baldwin2019understanding}. However, without these conditions, marginal pricing generally cannot clear the market while covering participants’ nonconvex costs \cite{liberopoulos2016critical}, necessitating alternative schemes. These issues are particularly evident in electricity markets with unit commitment constraints, leading to the specialized pricing schemes for DCUC reviewed below.

Second, various pricing schemes are employed to price DCUC in real-world electricity markets \citep[see][for reviews]{liberopoulos2016critical,azizan2020optimal,eldridge2021efficient}. 
The fixed-binary pricing scheme \citep{o2005efficient} is widely used by U.S. system operators, obtaining dual prices from a modified DCUC optimization problem, which is a linear optimization problem. Unfortunately, while easy to implement, this approach can violate individual rationality under linear prices, requiring potentially large uplift payments to make participants whole. Conversely, convex-hull pricing \citep{hogan2003minimum,gribik2007market} generally yields lower uplift payments but may not support an equilibrium without out-of-market payments or adjustments. Recently, leveraging an exact copositive programming reformulation of the DCUC optimization and its strong duality, \citet{guo2021copositive} propose a copositive duality pricing scheme that exhibits desirable properties, such as revenue adequacy, and supports market equilibrium when additional constraints are imposed on the formulation. However, copositive programs are computationally challenging; thus, this method is limited to very small DCUC instances. This observation directly motivates the SDP-based pricing scheme proposed in this work. 

Third, our work also studies markets with nonlinearity, particularly driven by the nonconvex physical constraints that model AC power flows in ACUC. As interest in this area has grown in recent years, there is an emerging literature on pricing schemes in electricity markets that utilize AC power flows. Some studies focus on ACOPF models of power markets, which are nonlinear and do not include discrete unit commitment decisions. For instance, \citet{garcia2020generalized} propose a generalized convex hull pricing scheme for such markets, while \citet{ndrio2022pricing} introduce two pricing schemes for ACOPF-based markets, respectively, via Karush-Kuhn-Tucker conditions of the nonconvex ACOPF model and SDP relaxations of ACOPF. \citet{bichler2023getting} demonstrate that incorporating AC power flow in pricing enhances price signals, reducing side payments and redispatch costs. {\cgreen \citet{romero2025linear} propose a pricing method that is based on a tractable cutting-plane algorithm for ACOPF, providing price signals close to those from the Jabr second-order cone relaxation.} Notably, literature on pricing schemes for ACUC is more recent. To the best of our knowledge, \citet{garcia2024average} propose the only existing scheme, namely the average incremental cost pricing. Extending DCUC pricing schemes to ACUC could be nontrivial, e.g., extending the convex hull pricing would require solving partial Lagrangians of mixed-integer nonlinear programs (MINLP). In contrast, we propose an SDP-based pricing framework that is straightforward to implement and applies to both DCUC and ACUC.  


Finally, the literature on performance guarantees for pricing schemes in nonconvex electricity markets is relatively sparse. \citet{gribik2007market} establishes that the LOC under convex hull pricing equals the optimality gap of the partial Lagrangian relaxation. Additionally, \citet{guo2021copositive} derives a bound for the subsidy needed to cover the cost of non-individually rational generators under copositive duality pricing. In this work, we provide an upper bound on LOC, which applies to prices defined as subgradients of the value function of any relaxed UC formulation.
\subsection{Notation}
We use non-boldface characters, such as $b$, to denote scalars, lowercase boldface characters, such as $\bm{x}$, to denote vectors, uppercase boldface characters, such as $\bm{X}$, to denote matrices, and calligraphic uppercase characters, such as $\mathcal{Z}$, to denote sets. We let $[n]$ denote the set of running indices $\{1, ..., n\}$. We let $\mathbf{e}$ denote a vector of all $1$'s, $\bm{0}$ denote a vector of all $0$'s, $(\cdot)^c$ denote the complex conjugate (e.g., $(4+6\img)^c = 4-6\img$), and $\mathbb{I}$ denote the identity matrix. We denote an optimal solution of a variable with $*$. For example, an optimal solution of variable $p_{gt}$ is denoted as $p_{gt}^*$. We also employ a variety of matrix operators. We let 
$\mathcal{S}^n$ denote the $n \times n$ cone of symmetric matrices, $\mathcal{S}_n^+$ denote the $n \times n$ positive semidefinite cone, $\mc{C}_n^+=\{\bm{C} \in \mathcal{S}^n: \bm{C} = \sum_{k\in\mc{K}}\vz^k (\vz^k)^\top \text{for some finite}  \{\vz^k\}_{k\in\mc{K}}\subset \R_+^n\sm\{\vo\}\}\cup\{\vo\}$ denote the $n \times n$ completely positive cone, $\mathcal{C}_n=\{\bm{C} \in \mathcal{S}^n: \bm{x}^\top \bm{C}\bm{x} \geq 0 \ \forall \bm{x} \geq \bm{0}\}$ denote the copositive cone and $DNN_n=\mc{S}_n^+ \cap \mathbb{R}^{n \times n}_{+} \supseteq \mc{C}_n^+$ denote the doubly non-negative cone.
\section{Background}\label{ch: backgrnd}
    In this section, we provide background on semidefinite relaxations of mixed-binary quadratic problems (MBQPs) in Section \ref{ssec:mqqps}, and a semidefinite relaxation of DCUC in Section \ref{ssec:uc_shifted_factor}. This background facilitates the design of our UC pricing scheme in Section \ref{ch: pricing_sdp} and Section \ref{ch: ac}.
    \subsection{Semidefinite Relaxations of Mixed Binary Quadratic Problems}\label{ssec:mqqps}
    In this section, we review semidefinite relaxations for MBQPs of the form:
    \begin{equation}  
    \begin{aligned}
    \pmbqp : \quad\min \quad& \bm{x}^\top \bm{Q}\bm{x}+\vc^\top \bm{x}\label{eq: mbqp}\\ 
    \st \quad
    &\va^\top_{j} \vx = b_{j} && \forall j= 1,...,m \\
    & \vx\in\mb{R}^{n}_+\\
    & x_i \in\{0,1\}&~~&\forall i \in \mathcal{B},
    \end{aligned}
    \end{equation}
    where $\vx$ is a vector of nonnegative decision variables, $\bm{Q}\succeq \bm{0}$ is a positive semidefinite (PSD) objective matrix, and $ \mathcal{B} \subseteq [n]$ denotes the indices of the binary variables $x_i\in \{0, 1\}$. Note that signed variables $x$ can be modeled by Problem \eqref{eq: mbqp} via the standard decomposition $x=x_+ - x_-: x_+, x_- \geq 0$. 
    
    As shown by \citet{burer2009copositive}, MBQPs of the form $\pmbqp$ admit completely positive reformulations of the form:
    \begin{equation}\label{eq:cpp}
    \begin{aligned}
    \pcpp: \quad\min \quad& \tr(\bm{Q}^{\top} \bm{X})+\vc^\top \bm{x}\\ 
    \st \quad
    &\va_{j}^\top \vx = b_{j} && \forall j= 1,...,m\\ 
    & \tr(\va_{j} \va^\top_{j} \bm{X}) = b_{j}^2&~~&\forall j= 1,...,m \\
    & x_i = X_{ii} &&\forall i \in \mathcal{B}\\
     & \bm{Y} \in \mc{C}^+_{n+1},
    \end{aligned}
    \end{equation}
    where, to simplify notation, we write
    \begin{align}\label{eq: y}
    \bm{Y}: = \begin{bmatrix} 1 &\vx^\top\\ \vx & \bm{X}\end{bmatrix}
    \end{align}
    throughout the rest of the paper.
    
    In $\pcpp$, $\bm{Y}$ is a completely positive matrix where $\bm{X}$ models the non-convex outer product $\bm{xx}^\top$. As proven by \citet[Theorem 2.6]{burer2009copositive}, a rank-one constraint can be imposed in \eqref{eq:cpp} without loss of optimality, and in some rank-one solutions, $\bm{X}=\vx\vx^\top$ and $\vx$ has binary components. Thus, Problems \eqref{eq: mbqp}--\eqref{eq:cpp} satisfy \textit{objective value equivalence} in the sense that their optimal values coincide, and all optimal solutions to \eqref{eq: mbqp} solve \eqref{eq:cpp}. Moreover, \eqref{eq:cpp} and its dual satisfy strong duality under mild conditions such as bounded feasible region or a linear objective function in \eqref{eq: mbqp} \citep{brown2024copositive, cifuentes2024sensitivity}, which makes \eqref{eq:cpp} a theoretically appealing device for designing pricing mechanisms for problems with discrete decisions \citep{guo2021copositive}.  
    
    Note that the second set of constraints, $\tr(\va_{j} \va^\top_{j} \bm{X}) = b_{j}^2$, are known as reformulation linearization technique (RLT) constraints \citep{sherali1998exploiting}. They are obtained by squaring the left-hand and right-hand sides of $\va_{j}^\top \vx = b_{j}$, and then replacing the $\vx\vx^\top$ term with $\bm{X}$.

    
    Unfortunately, no solvers currently solve CPPs to optimality for non-trivial instances. 
    In the literature, CPPs are usually solved approximately via their SDP relaxations, such as the doubly non-negative relaxation:
    \begin{subequations}\label{eq: sdp}
    \begin{align}
    \psdp: \quad\min \quad& \tr(\bm{Q}^{\top} \bm{X})+\vc^\top \bm{x}\label{eq: sdp_1}\\ 
    \st \quad
    &\va_{j}^\top \vx = b_{j} && \forall j= 1,...,m && (\gamma_j)\label{eq: sdp_2}\\ 
    & \tr(\va_{j} \va^\top_{j} \bm{X}) = b_{j}^2&~~&\forall j= 1,...,m && (\omega_j)\label{eq: sdp_3}\\%
    & x_i = X_{ii} &&\forall i \in \mathcal{B} && (\kappa_i)\label{eq: sdp_4}\\%
    &\bm{Y} \in\mc{S}_{n+1}^+ &&  && (\Omega)\label{eq: sdp_5}\\
    & \bm{Y} \geq 0 && &&(\Theta),\label{eq: sdp_6}%
    \end{align}
    \end{subequations}
    where $\bm{Y} = \begin{pmatrix} 1 &\vx^\top\\ \vx & \bm{X}\end{pmatrix}$. The last two constraints ensure that $\bm{Y}$ is doubly nonnegative (DNN), i.e., positive semidefinite and elementwise nonnegative. This is a well-known relaxation of the constraint $\bm{Y} \in \mc{C}^+_{n+1}$ \citep[][]{diananda1962non, parrilo2000structured}. The Greek letters in brackets denote the dual variables for each constraint.

    \subsection{Semidefinite Relaxation of DCUC Problems}\label{ssec:uc_shifted_factor}
    In this section, we describe an MILP model and its SDP relaxation for DCUC problems. We present a {\it shift factor} formulation \eqref{eq: uc_model_sf} for DCUC problems \citep{fattahi2017conic}, which we denote by $\uc$. This formulation is equivalent to the commonly-used network flow UC formulation \citep{bienstock2024risk} by eliminating the voltage angle variables. 
    In numerical experiments, we observe that the SDP relaxation for \eqref{eq: uc_model_sf} is more efficient than the relaxation for the network flow formulation.
    We note, however, that our pricing framework can be easily extended to other UC formulations. 
    
    \begin{subequations}\label{eq: uc_model_sf}
    \begin{align}
    \min \quad& \sum_{g\in\mc{G}}\sum_{t\in\mc{T}} \left(C^p_g p_{gt} + C^{u}_g u_{gt} + C^{v}_g v_{gt} + C_g^z z_{gt}\right)\label{eq: uc_model_sf_0}\\
    \st \quad & \sum_{g\in\mc{G}} p_{gt} = \sum_{k\in\mc{N}}D_{kt} &&\forall t\in\mc{T} &&(\lambda_t)\label{eq: uc_model_sf_1}\\
    & B_{ij} \sum_{k=1}^{|\mc{N}|}\left(\left(S_{ik} - S_{jk}\right)(\sum_{g\in\mc{G}_k}p_{gt}-D_{kt})\right) \geq P^{\text{Trans},\min}_{ij} &&\forall (i, j)\in\mc{L}, t\in\mc{T} &&(\xi^{\min}_{ijt})\label{eq: uc_model_sf_2}\\
    & B_{ij} \sum_{k=1}^{|\mc{N}|}\left(\left(S_{ik} - S_{jk}\right)(\sum_{g\in\mc{G}_k}p_{gt}-D_{kt})\right) \leq P^{\text{Trans},\max}_{ij}  &&\forall (i, j)\in\mc{L}, t\in\mc{T}  &&(\xi^{\max}_{ijt})\label{eq: uc_model_sf_3}\\
    & p_{gt} \geq \pmin z_{gt}  &&\forall g\in\mc{G}, t\in\mc{T} && \label{eq: uc_model_sf_4}\\
    & p_{gt} \leq \pmax z_{gt} &&\forall g\in\mc{G}, t\in\mc{T}  && \label{eq: uc_model_sf_5}\\
    & u_{gt}-v_{gt} = z_{gt} - z_{g, t-1}  &&\forall g\in\mc{G}, t\in\mc{T}\sm\{1\}  &&\label{eq: uc_model_sf_7} \\
    & p_{gt} - p_{g,t-1}\leq \rup z_{g,t-1}+\pmin u_{gt} &&\forall g\in\mc{G}, t\in\mc{T}\sm\{1\} &&\label{eq: uc_model_sf_8}\\
    & p_{g,t-1}-p_{gt}\leq \rdn z_{gt}+\pmin v_{gt} &&\forall g\in\mc{G}, t\in\mc{T}\sm\{1\} &&\label{eq: uc_model_sf_9}\\
    & \sum_{\tau=\max(1,t-\minup)}^{t} u_{g\tau} \leq z_{gt} &&\forall g\in\mc{G}, t\in\mc{T}\sm\{1\} &&\label{eq: uc_model_sf_10}\\
    & \sum_{\tau=\max(1,t-\mindn)}^{t} v_{g\tau} \leq 1-z_{gt} && \forall g\in\mc{G}, t\in\mc{T}\sm\{1\} &&\label{eq: uc_model_sf_11} \\
    & u_{gt}, v_{gt}, z_{gt} \in \{0,1\} && \forall g\in\mc{G}, t\in\mc{T} &&   \label{eq: uc_model_sf_12bin} \\
    & p_{gt}, u_{gt}, v_{gt}, z_{gt}\geq 0 &&\forall g\in\mc{G}, t\in\mc{T}\label{eq: uc_model_sf_13} 
    , 
    \end{align}
    \end{subequations}
    where $\mc{N}$ denotes the set of buses (nodes), $\mc{L}$ denotes the set of transmission lines, $\mc{T}$ denotes the set of time periods, $\mc{G}$ denotes the set of generators, and $\mc{G}_k$ denotes the set of generators at bus $k\in\mc{N}$. For the variables, $p_{gt}$ denotes the electricity production of generator $g$ during time period $t$. The binary variable $z_{gt}$ denotes the on or off status, which equals 1 if $g$ is on during time period $t$ and 0 otherwise. The binary variable $u_{gt}$ denotes the startup decision, which equals 1 if generator $g$ is started up during time period $t$ and equals 0 otherwise. The shutdown decision variable $v_{gt}$ is similarly defined. The Greek letters in the rightmost column denote the corresponding dual variables for the linear or semidefinite relaxation of \eqref{eq: uc_model_sf}. 

    The objective \eqref{eq: uc_model_sf_0} minimizes the total cost, which includes the production cost $C^p_g p_{gt}$, the startup cost $C^{u}_g u_{gt}$, the shutdown cost $C^{v}_g v_{gt}$, and the fixed cost of keeping a generator on ($C_g^z z_{gt}$). 
    Constraints \eqref{eq: uc_model_sf_1} are the market-clearing constraints, which ensure that the total demand equals the total production at each time period. Constraints \eqref{eq: uc_model_sf_2} and \eqref{eq: uc_model_sf_3} are respectively the lower and upper bounds for the flow in line $(i, j)$. The left-hand sides of both constraints represent the flow in $(i, j)$, where $B_{ij}$ is the susceptance of line $(i,j)$. 
    Constraints \eqref{eq: uc_model_sf_4} and \eqref{eq: uc_model_sf_5} bound production levels, where $\pmin$ and $\pmax$ are respectively the minimum and maximum production levels of $g$. Constraints \eqref{eq: uc_model_sf_7} connects $u_{gt}, v_{gt}$, and $z_{gt}$. Constraints \eqref{eq: uc_model_sf_8} and \eqref{eq: uc_model_sf_9} respectively bound the rate of ramping up and down, where $\rup$ and $\rdn$ respectively denote the ramping up and down limits. Constraints \eqref{eq: uc_model_sf_10} and \eqref{eq: uc_model_sf_11} respectively enforce the minimum up and down time constraints, where $\minup$ and $\mindn$ respectively denote the minimum up and down time.

    $\uc$ is an MILP, which is a special type of MIQP. We obtain the SDP relaxation of $\uc$ in the form of $\mc{P}^\sdp$ and denote this relaxation problem \eqref{eq: uc_sdp} as $\uc^\sdp$. To formulate $\uc^\sdp$, we add slack variables to inequality constraints of $\uc$ to make them equalities. The $\uc^\sdp$ formulation is as follows: 
    \allowdisplaybreaks
    \begin{subequations}\label{eq: uc_sdp}
    \begin{align}
    \min \quad& \sum_{g\in\mc{G}}\sum_{t\in\mc{T}} \left(C^p_g p_{gt} + C^{u}_g u_{gt} + C^{v}_g v_{gt} + C_g^z z_{gt}\right)\hspace{-3cm}\label{eq: uc_sdp_0}\\
    \st \quad& \sum_{g\in\mc{G}} p_{gt} = \sum_{k\in\mc{N}}D_{kt} && \forall t\in\mc{T} \label{eq: uc_sdp_1}\\
    & \va_{1ijt}^\top\vx = B_{ij} \sum_{k=1}^{|\mc{N}|}\left(S_{ik} - S_{jk}\right)D_{kt}+P^{\text{Trans},\min}_{ij}&&\forall (i, j)\in\mc{L}, t\in\mc{T}\label{eq: uc_sdp_2}\\
    & \va_{2ijt}^\top\vx = B_{ij} \sum_{k=1}^{|\mc{N}|}\left(S_{ik} - S_{jk}\right)D_{kt}+P^{\text{Trans},\max}_{ij} &&\forall (i, j)\in\mc{L}, t\in\mc{T}\label{eq: uc_sdp_3}\\
    &\aphitop_{egt} \vx = b_{egt} && \forall e= 1,\dots,m; g\in\mc{G}, t\in\mc{T}\label{eq: uc_sdp_4}\\
    &\tr(\vh_t \vh^\top_t \bm{X}) = \left(\sum_{k\in\mc{N}}D_{kt}\right)^2&&\forall t\in\mc{T}\label{eq: uc_sdp_5}\\
    &\tr(\aphi_{1ijt} \aphitop_{1ijt} \bm{X}) = \Large(B_{ij} \sum_{k=1}^{|\mc{N}|}\left(S_{ik} - S_{jk}\right)D_{kt}\\ &+ P^{\text{Trans},\min}_{ij}\Large)^2  &&\forall (i, j)\in\mc{L}, t\in\mc{T}\label{eq: uc_sdp_6}\\
    &\tr(\aphi_{2ijt} \aphitop_{2ijt} \bm{X}) = \Large(B_{ij} \sum_{k=1}^{|\mc{N}|}\left(S_{ik} - S_{jk}\right)D_{kt} \\&+P^{\text{Trans},\max}_{ij}\Large)^2 \hspace{-0.3cm}&&\forall (i, j)\in\mc{L}, t\in\mc{T}\label{eq: uc_sdp_7}\\
    &\tr(\aphi_{egt} \aphitop_{egt} \bm{X}) = b_{egt}^2&&\forall e= 1,...,m; g\in\mc{G}, t\in\mc{T}\label{eq: uc_sdp_8}\\
    &x_i = X_{ii} &&\forall i\in\mc{B} \label{eq: uc_sdp_9}\\
    &\bm{Y}\in\mc{S}^+_{\dim(\vx)+1}, \bm{Y}\geq 0, \label{eq: uc_sdp_10}
    \end{align}
    \end{subequations}
    where $\vx\in\R^{\dim{\vx}}$ denotes the vector of all variables, including slack variables, of $\uc$. Note that $\bm{Y}: = \begin{bmatrix} 1 &\vx^\top\\ \vx & \bm{X}\end{bmatrix}$. \eqref{eq: uc_sdp_2} and \eqref{eq: uc_sdp_3} are equality versions of constraints \eqref{eq: uc_model_sf_2} and \eqref{eq: uc_model_sf_3}, with vectors $\va_{1ijt}$ and $\va_{2ijt}$ denoting their constraint coefficients. \eqref{eq: uc_sdp_4} represents generator operational constraints \eqref{eq: uc_model_sf_4} - \eqref{eq: uc_model_sf_11} plus the upper bound on binary variables, where $\va_{egt}$ and $b_{egt}$ respectively denote the constraint coefficient vectors and right-hand sides of those operational constraints. \eqref{eq: uc_sdp_5}-\eqref{eq: uc_sdp_8} are RLT constraints obtained by squaring \eqref{eq: uc_sdp_2}-\eqref{eq: uc_sdp_4}, where $\vh_t$ is the vector of coefficients for \eqref{eq: uc_sdp_1}, i.e., entries of $\vh_t$ equal 1 for $p_{gt},g\in\mc{G}$ and 0 otherwise. \eqref{eq: uc_sdp_9} corresponds to \eqref{eq: sdp_4} in $\mc{P}^\sdp$. \eqref{eq: uc_sdp_10} includes DNN constraints.

    \section{UC Pricing with Semidefinite Relaxation}\label{ch: pricing_sdp}
    In this section, we propose a pricing scheme for UC that leverages the envelope theorem from economics. We first develop an envelope theorem for a semidefinite relaxation of $\pmbqp$ in Section \ref{ch: env_th}. Then, we leverage this envelope theorem to define a pricing scheme for DCUC's semidefinite relaxation in Section \ref{ssec:envthm_sdppricing}. 
    \subsection{Envelope Theorem for Semidefinite Relaxation}\label{ch: env_th}
    The envelope theorem \citep[see][for an overview]{milgrom2002envelope} quantifies how the optimal value of an optimization problem changes when its parameters are perturbed, by characterizing the directional derivative of the optimization problem's value function.
    In this section, we use the envelope theorem to obtain derivatives of value functions for a strengthened SDP relaxation $\mc{P}^\sdpbd$, which is parameterized in the right-hand side. This allows us to analyze the sensitivity of $\pmbqp$ and, thus, to design pricing schemes for DCUC in Section \ref{ssec:envthm_sdppricing}.

    We use the following assumption throughout this paper:

    \begin{assumption}[Bounded continuous variables in MBQP]\label{ass: bound}
        The non-negative continuous variables in MBQP $\mc{P}^\mbqp$ (Section \ref{ssec:mqqps}) are bounded from above. In particular, for each continuous variable $x_i: i\in [n]\sm\mc{B}$, we have the finite bound $x_i \le U_i$ for some $U_i$. 
    \end{assumption}

    Assumption \ref{ass: bound} is standard for real-world UC problems: continuous decision variables represent physical quantities with explicit engineering limits. For instance, the continuous variable for power output $p_{gt}$ is bounded by the generator's bid-based capacity limit $\pmax$, as indicated by the constraint \eqref{eq: uc_model_sf_5}, often coupled with ramping and reserve limits. Similarly, AC power-flow equations (see Section \ref{ch: ac}) incorporate engineering constraints, such as bounds on active/reactive injections and voltage magnitudes. Boundedness is also a common assumption in copositive optimization to ensure strong duality \citep[e.g.,][]{brown2024copositive,cifuentes2024sensitivity}. This assumption is necessary for some of our theoretical results, which require strong duality of SDP.  

    Under Assumption \ref{ass: bound}, we strengthen our SDP relaxation by imposing the valid inequalities $X_{ii}\leq U_i^2$ for each continuous variable index $i\in [n]\sm\mc{B}$ (note that this corresponds to $x_i^2 \leq U_i^2$ in an exact rank-one lifting where $\bm{X}=\bm{x}\bm{x}^\top$). This yields the following strengthened SDP relaxation, in which we introduce a scalar parameter $\theta$ that appears in the right-hand sides of the constraints, and introduce the dummy matrix $\bm{Y} = \begin{pmatrix} 1 &\vx^\top\\ \vx & \bm{X}\end{pmatrix}$ for conciseness:
    \begin{subequations}\label{eq:sdp-bd}
    \begin{align}
    \psdpbd(\theta): \quad\min \quad& \tr(\bm{Q}^{\top} \bm{X})+\vc^\top \bm{x}\label{eq: sdp_bd}\\ 
    \st \quad
    &\va_{j}^\top \vx = b_{j}(\theta) && \forall j= 1,...,m &&(\gamma_j) \label{eq: sdp_bd4}\\ 
    & \tr(\va_{j} \va^\top_{j} \bm{X}) = b_{j}(\theta)^2&~~&\forall j= 1,...,m && (\omega_j)\label{eq: sdp_bd4}\\
    & X_{ii} \leq  U_i^2 &&\forall i \in [n]\sm\mathcal{B} && (\phi_i)\label{eq: sdp_bd3}\\
    & \eqref{eq: sdp_4}-\eqref{eq: sdp_6},
    \end{align}
    \end{subequations}
    where $b_{j}(\theta), j= 1,...,m$ are differentiable functions of $\theta$. {\cgreen Similarly, we write the parameterized $\mc{P}^\mbqp$ with $b_j(\theta)$ in the right-hand side as $\mc{P}^\mbqp(\theta)$.}

    We now define the value function of Problem \eqref{eq:sdp-bd}: 
    \begin{definition}[Primal value function of $\psdpbd(\theta)$]
    Let $\feas(\cdot)$  denote the feasible region. Define the value function 
    \begin{align}
         V^{\sdp}(\theta) := \min_{\bm{Y}\in\feas(\mc{P}^{\sdpbd}(\theta))} \tr(\bm{Q}^{\top}\bm{X})+\vc^\top \bm{x}, 
    \end{align}
    \end{definition}

    To derive an envelope theorem for the strengthened SDP relaxation \eqref{eq:sdp-bd}, we first prove that its feasible region is nonempty, bounded, and consists of two cones and a hyperplane. Such conditions are standard for ensuring strong duality in conic optimization \citep{kim2025strong, brown2024copositive}:

    \begin{proposition}[Strong duality for {\cgreen $\mc{P}^\sdpbd(\theta)$}]\label{th: dual}
        If {\cgreen $\mc{P}^\mbqp(\theta)$} is feasible and satisfies Assumption \ref{ass: bound}, then strong duality holds between the SDP relaxation {\cgreen $\mc{P}^\sdpbd(\theta)$} and its dual {\cgreen $\mc{P}^{\sdpdual}(\theta)$}.
    \end{proposition}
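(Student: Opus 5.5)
Since $\mc{P}^\mbqp(\theta)$ is feasible, the relaxation $\psdpbd(\theta)$ is feasible: for any feasible $\bar\vx$ of $\mc{P}^\mbqp(\theta)$, the rank-one lifting $\bm{Y}=(1,\bar\vx^\top)^\top(1,\bar\vx^\top)$ satisfies all constraints. The linear constraints and the RLT constraints hold because $\tr(\va_j\va_j^\top\bar\vx\bar\vx^\top)=(\va_j^\top\bar\vx)^2=b_j(\theta)^2$. The binary constraints hold because $\bar x_i^2=\bar x_i$ for $i\in\mc{B}$. The bound constraints hold because $\bar x_i^2\le U_i^2$ by Assumption \ref{ass: bound}. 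Finally, $\bm{Y}$ is PSD and entrywise nonnegative since $\bar\vx\ge\vo$. The rest of the plan is to show that the feasible region is bounded and then apply a standard conic strong-duality result.

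\textbf{Boundedness.} It suffices to bound the diagonal of $\bm{Y}$, because for a PSD matrix $|Y_{kl}|\le\sqrt{Y_{kk}Y_{ll}}$.
\begin{itemize}
\item For continuous indices $i\in[n]\sm\mc{B}$, we have $X_{ii}\le U_i^2$ directly.
\item For binary indices $i\in\mc{B}$, we have $X_{ii}=x_i$. The $2\times2$ principal minor gives $x_i^2\le X_{ii}=x_i$, so $0\le x_i\le 1$ and hence $X_{ii}\le 1$. (If the paper's formulation already includes $x_i\le1$ among the linear constraints, this is immediate anyway.)
\item The entry $Y_{00}=1$.
\end{itemize}
Thus the feasible set is contained in $\{\bm{Y}:\tr(\bm{Y})\le 1+|\mc{B}|+\sum_{i\notin\mc{B}}U_i^2\}\cap\mc{S}^+_{n+1}$, which is compact. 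The feasible set is therefore nonempty, closed, convex and bounded. It is the intersection of the PSD cone, the nonnegative orthant, and an affine set (the equalities, including $Y_{00}=1$), with the linear inequalities $X_{ii}\le U_i^2$ handled by slacks.

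\textbf{Strong duality.} With a nonempty compact feasible region, the primal attains its optimum. I would invoke the known result (e.g.\ \citet{kim2025strong}, or a direct argument via closedness of the image cone) that a conic program with nonempty bounded feasible set has zero duality gap with its dual. Concretely, a bounded nonempty feasible region implies there is no nonzero recession direction $\bm{D}$ with $\bm{D}\in\mc{S}^+_{n+1}$, $\bm{D}\ge0$, $D_{00}=0$ and the homogeneous constraints satisfied. By a theorem of alternatives, this gives a strictly feasible (Slater) point for the dual. Slater for the dual together with primal feasibility yields zero duality gap and primal attainment, which is exactly strong duality between $\psdpbd(\theta)$ and $\mc{P}^{\sdpdual}(\theta)$.

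\textbf{Main obstacle.} The delicate step is the passage from boundedness to dual strict feasibility. I would construct the dual certificate explicitly. Set the multiplier of $Y_{00}=1$ to be a large negative scalar $-\mu$, and choose the bound multipliers $\phi_i>0$ and $\kappa_i$ so that the dual slack matrix, roughly $\mathrm{Diag}(\mu,\ldots)$ plus the objective, lies in the interior of $(\mc{S}^+_{n+1}+\mathbb{R}_+^{(n+1)\times(n+1)})$, the dual of the DNN cone. The binary coordinates lack an explicit $X_{ii}$ upper bound with a free multiplier, so for them I would either use the combination $\kappa_i(X_{ii}-x_i)$ together with the $\mu$ term, or add the redundant $X_{ii}\le1$ implied above. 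If this explicit construction becomes messy, I would fall back on the abstract argument: a compact feasible set of a conic program yields a closed image set, and hence zero duality gap.
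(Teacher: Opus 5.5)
Your proof is correct. Your feasibility and boundedness steps coincide with the paper's Steps~1--2, but your duality step follows a genuinely different route.

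The paper homogenizes $\psdpbd(\theta)$ into $\mc{P}^{\hom}$, which has one hyperplane $\tr(\bm{H}\Ytilde)=1$ and two cones, $\mathcal{S}^+_{n+1}$ and $\kcone$. It applies Corollary~1.2 of \citet{kim2025strong} to that form and then transfers the conclusion back in two steps:
\begin{itemize}
\item a Schur-complement argument gives $\feas(\mc{P}^{\hom})\subseteq\feas(\psdpbd)$;
\item an explicit change of multipliers maps each homogenized dual solution to a feasible point of $\mc{P}^{\sdpdual}$ with the same value, and weak duality closes the chain.
\end{itemize}

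You instead treat $\psdpbd(\theta)$ directly as a standard-form program over the single closed pointed cone $\mathrm{DNN}_{n+1}\times\R^p_+$, with $p=n-|\mc{B}|$ slacks for $X_{ii}\le U_i^2$. You then use the classical chain: trivial recession cone $\Rightarrow$ dual Slater point $\Rightarrow$ zero gap with primal attainment. This avoids both homogenization and the transfer steps. It matches the paper's dual because the dual of the DNN cone, $\mathcal{S}^+_{n+1}+\mathcal{E}$ (with $\mathcal{E}$ the entrywise-nonnegative symmetric matrices), is closed, since $\mathcal{S}^+_{n+1}\cap(-\mathcal{E})=\{\vo\}$. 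So it is exactly the $\bm{\Omega}+\bm{\Theta}$ decomposition in $\mc{P}^{\sdpdual}$.

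The step you flag as the main obstacle is easy. Take $\pmb\gamma=\pmb\omega=\vo$, $\kappa_i=t$ for $i\in\mc{B}$, $\phi_i=-t$ for $i\notin\mc{B}$, and $\mu=-st$ with $s>|\mc{B}|/4$. The dual slack matrix is then $\bm{C}+t\bm{N}$ with $\bm{N}=\begin{bmatrix} s & -\tfrac12\bm{1}_{\mc{B}}^\top\\ -\tfrac12\bm{1}_{\mc{B}} & \mathbb{I}\end{bmatrix}$, where $\bm{1}_{\mc{B}}$ is the indicator vector of $\mc{B}$. Its Schur complement is $s-|\mc{B}|/4>0$, so $\bm{N}\succ\vo$, hence $\bm{C}+t\bm{N}\succ\vo$ for large $t$, while $\pmb\phi<\vo$ strictly.

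Drop the blanket appeal to \citet{kim2025strong}. Their corollary covers only the single-hyperplane, two-cone form, which is exactly why the paper must homogenize before invoking it. The paper also warns, citing Pataki, that boundedness alone is not a safe basis for SDP strong duality. So let the recession-cone/Slater argument, or the certificate above, carry the proof: it is what shows boundedness suffices for a standard-form program over one closed pointed cone like yours.

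The paper's route reuses the CPP template of \citet{brown2024copositive}. Yours is more elementary and also yields an explicit dual Slater point and primal attainment. Note that it does not give dual attainment: every feasible $\bm{Y}$ annihilates $(-b_j,\va_j^\top)^\top$, so the primal has no Slater point. Theorem~\ref{prop:envelopetheorem1} presupposes dual attainment when it takes $\Psi^*$ to exist.
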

    Proposition \ref{th: dual} is arguably non-trivial, because there exist families of SDPs with bounded feasible regions and strictly positive duality gaps \cite[Example 1]{pataki2018positive}. Our result essentially follows by showing that {\cgreen $\mc{P}^{\sdpbd}(\theta)$ is feasible and has a compact feasible region}, then leveraging a result due to \citet{kim2025strong} which guarantees that for problems of the form {\cgreen $\mc{P}^{\sdpbd}(\theta)$}, feasibility and primal compactness give strong duality. 

    With strong duality, the primal and dual value functions are equivalent. We write the dual value function of $\psdpbd(\theta)$ explicitly because it is used in the envelope theorem.

    \begin{definition}[Dual value function of $\psdpbd(\theta)$]
        Let $\Psi:= (\pmb \gamma, \pmb \omega, \pmb\phi, \pmb \kappa, \bm{\Omega}, \bm{\Theta})$ denote the tuple of variables in $\mc{P}^{\sdpdual}$. Define 
        \begin{align}
        V^{\sdpdual}(\theta) := \max_{\Psi\in\feas(\psdpdual(\theta))} \fd(\Psi, \theta),
        \end{align}
        where $\fd(\Psi, \theta) := \sum_{j=1}^m \gamma_j b_j(\theta) + \omega_j b_j (\theta)^2+\sum_{i\in [n]\sm\mc{B}} U_i^2 \phi_i$. Further, let $\feastar(\psdpdual(\theta))$ denote the set of optimal dual solutions at $\theta$.
    \end{definition}
    
    We now present an envelope theorem for the tightened SDP relaxation. The result essentially follows by combining \citep[Theorem 1]{milgrom2002envelope} with the strong duality of the SDP relaxation (Proposition \ref{th: dual}):

    \begin{theorem}[Envelope theorem for $\mc{P}^\sdpbd$]\label{prop:envelopetheorem1}
    Let $[\thetalb, \thetaub]$ be an interval where $\psdpbd(\theta)$ is feasible for every $\theta\in [\thetalb, \thetaub]$ and $\Psi^*\in\feastar(\psdpdual(\theta))$. 
    
    If $\theta>\thetalb$ and $V^\sdp(\theta)$ is left differentiable, then $V^{\sdp'}(\theta-) \le \fd_{\theta}(\Psi^*, \theta)$. If $\theta<\thetaub$ and $V^\sdp(\theta)$ is right differentiable, then $V^{\sdp'}(\theta+) \ge \fd_{\theta}(\Psi^*, \theta)$.  If $\theta\in[\thetalb, \thetaub]$ and $V^\sdp(\theta)$ is differentiable, then $V^{\sdp'}(\theta) = \fd_{\theta}(\Psi^*, \theta)$. 
        
    \end{theorem}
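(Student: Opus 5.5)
The plan is the classical Milgrom--Segal argument, applied to the dual representation of the value function rather than the primal one. By Proposition \ref{th: dual}, for every $\theta\in[\thetalb,\thetaub]$ we have $V^\sdp(\theta)=V^{\sdpdual}(\theta)$, since $\psdpbd(\theta)$ is feasible there. Strong duality in that proposition also comes with dual attainment, which is what makes $\feastar(\psdpdual(\theta))$ nonempty. Fix $\theta$ and an optimal dual solution $\Psi^*\in\feastar(\psdpdual(\theta))$.

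The key structural observation is that the dual feasible set does not depend on $\theta$. The parameter enters $\psdpbd(\theta)$ only through the right-hand sides $b_j(\theta)$ and $b_j(\theta)^2$. The dual constraints come from matching coefficients of $\bm{Y}$, and so involve only $\bm{Q}$, $\vc$, $\va_j$, the unit bound structure and the cone constraints. Hence $\feas(\psdpdual(\theta))=:\mc{D}$ is the same set for all $\theta$, and $\theta$ appears only in the objective $\fd(\Psi,\theta)$. This is why the dual, not the primal, is the right object: in the primal the feasible set moves with $\theta$.

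Consequently $V^\sdp(\theta')=\max_{\Psi\in\mc{D}}\fd(\Psi,\theta')\ge \fd(\Psi^*,\theta')$ for all $\theta'\in[\thetalb,\thetaub]$, with equality at $\theta'=\theta$. Define $h(\theta'):=V^\sdp(\theta')-\fd(\Psi^*,\theta')\ge 0$, so that $h(\theta)=0$ and $\theta$ is a minimizer of $h$ on the interval. Note that $\fd(\Psi^*,\cdot)=\sum_j \gamma_j^* b_j(\cdot)+\omega_j^*b_j(\cdot)^2+\text{const}$ is differentiable in $\theta'$ because each $b_j$ is. Its derivative is
$$\fd_\theta(\Psi^*,\theta)=\sum_{j=1}^m\bigl(\gamma_j^*+2\omega_j^*b_j(\theta)\bigr)b_j'(\theta).$$

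The one-sided claims then follow from difference quotients. For $\theta'<\theta$ we have $h(\theta')\ge h(\theta)$, so $(h(\theta')-h(\theta))/(\theta'-\theta)\le 0$. Letting $\theta'\uparrow\theta$, which is possible because $\theta>\thetalb$ and $V^\sdp$ is left differentiable, gives $V^{\sdp'}(\theta-)-\fd_\theta(\Psi^*,\theta)\le 0$. Symmetrically, for $\theta'>\theta$ the quotient is $\ge 0$, so right differentiability with $\theta<\thetaub$ yields $V^{\sdp'}(\theta+)\ge \fd_\theta(\Psi^*,\theta)$. If $V^\sdp$ is differentiable at an interior $\theta$, the two inequalities combine to equality. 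At an endpoint, differentiability is understood one-sidedly, and that one-sided derivative is what the statement asserts equals $\fd_\theta$. I would justify this as in \citep[Theorem 1]{milgrom2002envelope}: the relevant first-order condition of the minimum of $h$ on the interval, together with the two-sided derivative, gives the equality.

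I expect the main obstacle to be twofold. First, one must verify rigorously that the dual feasible region is genuinely $\theta$-independent. This requires writing out $\psdpdual(\theta)$ explicitly from the Lagrangian of \eqref{eq:sdp-bd} and checking that no $\theta$-dependence leaks into the cone constraint on $\Omega,\Theta$. Second, one must confirm that Proposition \ref{th: dual} delivers dual attainment and not just a zero gap. If the cited result of \citet{kim2025strong} gives only a zero gap, I would argue attainment from primal compactness together with a Slater-type condition for the dual. Alternatively, I would restrict the statement to $\theta$ for which $\feastar(\psdpdual(\theta))\neq\emptyset$, which the theorem implicitly assumes by taking $\Psi^*$ in that set.
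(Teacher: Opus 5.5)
Your main argument is the paper's own proof. The dual feasible set is independent of $\theta$, so $V^\sdp(\theta')\ge \fd(\Psi^*,\theta')$ with equality at $\theta'=\theta$, and one-sided difference quotients give $V^{\sdp'}(\theta-)\le \fd_\theta(\Psi^*,\theta)\le V^{\sdp'}(\theta+)$. Your function $h$ just repackages this.

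The step that fails is your endpoint paragraph. Suppose $\theta=\thetalb$ and differentiability is read one-sidedly. Then $\theta$ minimizes $h$ only over $[\thetalb,\thetaub]$, and the first-order condition at a left endpoint is $h'(\thetalb+)\ge 0$. That is the inequality you already have. Milgrom--Segal's Theorem 1 asserts equality only at interior parameters, and under your reading equality is in fact false. Take continuous $x_1,x_2,s\ge 0$ with $U_i=2$, constraints $x_1+x_2=\theta$ and $x_1+s=1$, and objective $x_1+2x_2$. The objective is linear and $\va_j^\top\vx=b_j$, $\vx\ge\vo$ are imposed directly, so the SDP value equals the LP value. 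Hence $V^\sdp(\theta)=2\theta-1$ on $[1,2]$ and $V^{\sdp'}(1+)=2$. Now take the LP dual solution $(\gamma_1,\gamma_2)=(1,0)$ and pad it with $\bm{\omega}=\bm{\phi}=\vo$, $\bm{\Omega}=\bm{0}$, and $\bm{\Theta}$ equal to the elementwise nonnegative slack matrix. This is optimal for $\psdpdual(1)$, yet $\fd_\theta(\Psi^*,1)=1$. Your phrase about using ``the two-sided derivative'' at an endpoint mixes two incompatible readings.

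There are two valid repairs:
\begin{itemize}
\item Claim equality only for $\theta\in(\thetalb,\thetaub)$.
\item Read differentiability at an endpoint as two-sided, so $V^\sdp$ is finite on both sides of $\theta$. Because the dual feasible set does not depend on $\theta$, weak duality gives $V^\sdp(\theta')\ge \fd(\Psi^*,\theta')$ for every $\theta'$, not only $\theta'\in[\thetalb,\thetaub]$. Both difference-quotient bounds then hold at the endpoint and combine to equality.
\end{itemize}
The paper's proof tacitly uses the two-sided reading when it combines its two inequalities, and weak duality is the justification it leaves implicit.

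Separately, your first fallback for dual attainment runs the wrong way. A dual Slater point yields primal attainment, not dual attainment. Primal Slater fails here, because $\bm{Y}\succeq 0$ together with $\va_j^\top\vx=b_j$ and $\tr(\va_j\va_j^\top\bm{X})=b_j^2$ forces $\bm{Y}(-b_j,\va_j^\top)^\top=\vo$. Your second option is the right one: treat the existence of $\Psi^*$ as part of the hypothesis, as the statement does.
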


   Therefore, for any right-hand side parameter, its marginal effect on the optimal objective value at differentiable points is determined by the optimal dual solution of the strengthened SDP relaxation. 

   Theorem \ref{prop:envelopetheorem1} can immediately be extended to provide an envelope theorem as multiple parameters vary simultaneously, a result that will be useful when deriving prices for UC:

    \begin{corollary}[Vector envelope theorem for $\psdpbd$]\label{cor:vector-envelope} Let $\psdpbd(\vu)$ be a version of $\psdpbd$ whose right-hand sides $b_j(\vu), j=1,...,m$ are differentiable functions of vector $\vu\in \mathbb R^d$. Let $\mc{U}\subseteq\mathbb R^d$ be a set where $\psdpbd(\vu)$ is feasible for every $\vu\in\mc{U}$ and $\Psi^*\in \mathrm{Feas}^*(P_{\mathrm{SDP\text{-}dual}}(\vu))$.
    Let $V(\vu)$ denote its value function. Then $V(\vu)=\max_{\Psi\in\mathrm{Feas}(P^{\mathrm{SDP\text{-}dual}}(\vu))} f^d(\Psi,\vu)$. 
    
    If $V(\vu)$ is differentiable at $\vu$, then $\nabla V(\vu)=\nabla_{\vu} f^d(\Psi^*,\vu)$.
\end{corollary}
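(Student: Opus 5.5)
The first claim, $V(\vu)=\max_{\Psi} f^d(\Psi,\vu)$, follows directly from Proposition \ref{th: dual}. The second claim I will reduce to Theorem \ref{prop:envelopetheorem1} by restricting to lines through $\vu$.

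For the first claim, fix $\vu\in\mc{U}$. Then $\psdpbd(\vu)$ is feasible and comes from an MBQP satisfying Assumption \ref{ass: bound}. The constraint matrices and the bounds $U_i$ do not depend on $\vu$, and the argument behind Proposition \ref{th: dual} uses only feasibility and compactness of the feasible region. That compactness comes from $X_{ii}\le U_i^2$, from $X_{ii}=x_i$ with $x_i\le 1$ for binaries, and from $\bm{Y}\succeq 0$ bounding the off-diagonal entries. Hence strong duality holds at $\vu$, and $V(\vu)$ equals the dual optimum $\max_{\Psi\in\feas(\psdpdual(\vu))} f^d(\Psi,\vu)$. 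Moreover the dual optimum is attained, so $\Psi^*$ exists.

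For the gradient, fix a direction $\bm{d}\in\R^d$ and define $\theta\mapsto \vu+\theta\bm{d}$. Set $\tilde b_j(\theta):=b_j(\vu+\theta\bm{d})$; each $\tilde b_j$ is differentiable in $\theta$ by the chain rule. This is exactly the scalar setting of Theorem \ref{prop:envelopetheorem1}, with value function $\tilde V(\theta)=V(\vu+\theta\bm{d})$ and dual objective $\tilde f^d(\Psi,\theta)=f^d(\Psi,\vu+\theta\bm{d})$. The dual feasible set does not depend on the right-hand sides, so $\Psi^*$ remains dual optimal for the scalar problem at $\theta=0$.

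Since $V$ is differentiable at $\vu$, $\tilde V$ is differentiable at $0$ with $\tilde V'(0)=\nabla V(\vu)^\top\bm{d}$. The chain rule also gives $\tilde f^d_\theta(\Psi^*,0)=\nabla_{\vu}f^d(\Psi^*,\vu)^\top\bm{d}$. Theorem \ref{prop:envelopetheorem1} then yields
\begin{equation*}
\nabla V(\vu)^\top\bm{d}=\nabla_{\vu}f^d(\Psi^*,\vu)^\top\bm{d}.
\end{equation*}
Taking $\bm{d}=\ve_1,\dots,\ve_d$ (standard unit vectors) recovers the gradient identity componentwise.

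The main obstacle is the interval hypothesis of Theorem \ref{prop:envelopetheorem1}. It needs $\psdpbd$ to be feasible on a whole interval $[\thetalb,\thetaub]$ around $0$, which requires $\vu+\theta\bm{d}\in\mc{U}$ for small $|\theta|$. This holds if $\vu$ lies in the interior of $\mc{U}$, and I would state that as implicit in differentiability of $V$ at $\vu$: $V$ is finite only on the feasible set, so differentiability requires $V$ to be finite, hence the problem feasible, in a neighborhood of $\vu$.

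If $\vu$ were only a boundary point, the two one-sided inequalities of Theorem \ref{prop:envelopetheorem1} would still apply along feasible directions. For a genuine interior point I would rely on the equality case. Alternatively, the gradient identity can be proved directly with Milgrom--Segal's argument. By weak duality, $\Phi(\vu'):=f^d(\Psi^*,\vu')\le V(\vu')$ for all $\vu'$ near $\vu$, since $\Psi^*$ is dual feasible for every $\vu'$. At $\vu$ the two sides are equal, $\Phi(\vu)=V(\vu)$. So $V-\Phi$ is differentiable at $\vu$ and attains a local minimum there, which forces $\nabla V(\vu)=\nabla_{\vu}f^d(\Psi^*,\vu)$.
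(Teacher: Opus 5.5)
Your proposal is correct and follows the paper's proof essentially step for step: you restrict $V$ to lines $\vu+\theta\bm{d}$, apply Theorem~\ref{prop:envelopetheorem1} with the chain rule, and recover $\nabla V(\vu)$ by ranging over directions. Your two refinements are sound: differentiability of $V$ at $\vu$ already forces feasibility on a neighborhood (the paper simply assumes this via $\underline{\varepsilon},\overline{\varepsilon}$), and weak duality makes $V-f^d(\Psi^*,\cdot)$ locally minimized at $\vu$. One small caution: Proposition~\ref{th: dual} gives a zero duality gap, but compactness of the primal feasible set does not by itself guarantee dual attainment. The RLT equalities give $(-b_j,\va_j^\top)\bm{Y}(-b_j,\va_j^\top)^\top=0$, so no feasible $\bm{Y}$ is positive definite and there is no primal Slater point; take the existence of $\Psi^*$ from the corollary's hypothesis rather than asserting it.
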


    \subsection{Envelope Theorem for DCUC Pricing}\label{ssec:envthm_sdppricing}
    We consider the pricing problem in the day-ahead wholesale electricity market. In this market, the system operator clears the market by solving a DCUC problem that minimizes total operational cost. Since the demand is inelastic, this is equivalent to maximizing social welfare. We use the envelope theorem to identify how variations in demand change social welfare at the margin, and to treat those marginal effects as prices. 

    Because DCUC is an MILP, it generally does not have a dual problem for obtaining shadow prices. Our approach is to apply the envelope theorem on the value function of the DCUC SDP relaxation, and interpret the resulting derivatives as prices.

    \begin{definition}[Strengthened DCUC SDP relaxation and its dual]
        We consider the strengthened DCUC SDP relaxation $\uc^\sdpbd$, which relaxes the DCUC model $\uc$ following $\psdpbd$. That is to say, $\uc^\sdpbd$ strengthens $\uc^\sdp$ (Section \ref{ssec:uc_shifted_factor}) by imposing bounds $X_{ii}\le U_i^2$ for all continuous variables. Define the value function of $\uc^\sdpbd$ parameterized by demand $D_{kt}$ as $V^{\ucsdp}(D_{kt})$. 
        
        We denote the dual problem of $\uc^\sdpbd$ as $\uc^\sdpdual$. Let $f^{\rm uc-d}(\Psi, D_{kt})$ denote the objective of {\cgreen $\uc^\sdpdual$} parameterized by $D_{kt}$. Note that with a slight abuse of notation, we use $\Psi$ to denote the tuple of variables for both the general SDP dual and its UC counterpart. The intended meaning is clear from the context.
    \end{definition}

    To apply the envelope theorem, consider a feasible strengthened DCUC SDP relaxation parameterized by demand $D_{kt}$. We obtain a partial derivative of its dual problem's objective function at an optimal solution with respect to $D_{kt}$, denoted as $r^d_{kt} := f^{\rm uc-d}_{D_{kt}}(\Psi^*, D_{kt})$. We use $r^d_{kt}$ as the electricity price at bus $k$ and time $t$, and it represents the marginal value of supplying demand. The following proposition provides an expression for $r^d_{kt}$.

    \begin{proposition}[Expression for DCUC pricing]\label{th: foc_d}
         Let $\hat{\lambda}_t$ be the dual variable associated with \eqref{eq: uc_sdp_5}. Similarly, $\hat{\xi}^{\min}_{ijt}$ and $\hat{\xi}^{\max}_{ijt}$ are the dual variables of \eqref{eq: uc_sdp_6} and \eqref{eq: uc_sdp_7}, respectively. The price $r^d_{kt}$ is then given by the expression:
        \begin{align}\label{eq: foc_d}
            r^d_{kt} = &\lambda^*_t + 2\hat{\lambda}^*_t\sum_{k'\in\mathcal{N}}D_{k't} + \sum_{(i,j) \in \mathcal{L}} B_{ij}(S_{ik} - S_{jk})(\xi^{\min*}_{ijt} + \xi^{\max*}_{ijt})\nonumber\\
            & + 2\sum_{(i,j) \in \mathcal{L}}B_{ij}^2(S_{ik} - S_{jk})\Big(\sum_{k' \in \mathcal{N}}(S_{ik'} - S_{jk'})D_{k't} \Big)(\hat{\xi}^{\min*}_{ijt} + \hat{\xi}^{\max*}_{ijt}) \nonumber\\
            & + 2\sum_{(i,j) \in \mathcal{L}}B_{ij}(S_{ik} - S_{jk})(P^{\text{Trans},\min}_{ij}\hat{\xi}^{\min*}_{ijt} + P^{\text{Trans},\max}_{ij}\hat{\xi}^{\max*}_{ijt}).
        \end{align}
    \end{proposition}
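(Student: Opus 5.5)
The proof is a direct differentiation of the dual objective of $\uc^\sdpbd$ with respect to a single demand parameter $D_{kt}$. By the definition of $\fd$ for $\psdpbd$, the dual objective is linear in the dual variables, with coefficients equal to the constraint right-hand sides:
\[
\fd(\Psi,\theta)=\sum_j \gamma_j b_j(\theta)+\omega_j b_j(\theta)^2+\sum_{i\in[n]\sm\mc{B}}U_i^2\phi_i .
\]
I will specialize this to $\uc^\sdpbd$ by listing each equality constraint of \eqref{eq: uc_sdp} with its dual variable. The linear constraints are \eqref{eq: uc_sdp_1} with $\lambda_t$, \eqref{eq: uc_sdp_2} with $\xi^{\min}_{ijt}$, \eqref{eq: uc_sdp_3} with $\xi^{\max}_{ijt}$, and \eqref{eq: uc_sdp_4} with some $\gamma_{egt}$. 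The RLT constraints \eqref{eq: uc_sdp_5}--\eqref{eq: uc_sdp_7} carry $\hat\lambda_t$, $\hat\xi^{\min}_{ijt}$ and $\hat\xi^{\max}_{ijt}$, and \eqref{eq: uc_sdp_8} carries some $\omega_{egt}$. The remaining constraints (the binary linking \eqref{eq: uc_sdp_9}, the DNN/PSD cones and the bounds $X_{ii}\le U_i^2$) contribute either zero to the dual objective or terms $U_i^2\phi_i$.

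The first step is to identify which right-hand sides depend on $D_{kt}$. Demand appears only in \eqref{eq: uc_sdp_1}--\eqref{eq: uc_sdp_3} and their squared counterparts \eqref{eq: uc_sdp_5}--\eqref{eq: uc_sdp_7}, and only for the same period $t$. The operational data $b_{egt}$ are generator parameters that do not involve demand. I take the bounds $U_i$ to be fixed engineering limits (e.g.\ $\pmax$), chosen independently of $D$. Hence $\partial f^{\rm uc-d}/\partial D_{kt}$ involves only $\lambda_t,\hat\lambda_t,\xi^{\min}_{ijt},\xi^{\max}_{ijt},\hat\xi^{\min}_{ijt},\hat\xi^{\max}_{ijt}$.

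The second step is to compute each derivative by the chain rule:
\begin{itemize}
\item The right-hand side $\sum_{k'}D_{k't}$ of \eqref{eq: uc_sdp_1} has derivative $1$, which gives $\lambda^*_t$.
\item Its square in \eqref{eq: uc_sdp_5} has derivative $2\sum_{k'}D_{k't}$, which gives $2\hat\lambda^*_t\sum_{k'}D_{k't}$.
\item The line right-hand sides $B_{ij}\sum_{k'}(S_{ik'}-S_{jk'})D_{k't}+P^{\text{Trans},\min/\max}_{ij}$ have derivative $B_{ij}(S_{ik}-S_{jk})$, which gives the $\xi^{\min*}_{ijt}+\xi^{\max*}_{ijt}$ term.
\item The squared line right-hand sides have derivative
\[
2\Big(B_{ij}\sum_{k'}(S_{ik'}-S_{jk'})D_{k't}+P^{\text{Trans},\cdot}_{ij}\Big)B_{ij}(S_{ik}-S_{jk}).
\]
Expanding this product splits it into the $2B_{ij}^2(S_{ik}-S_{jk})\big(\sum_{k'}(S_{ik'}-S_{jk'})D_{k't}\big)(\hat\xi^{\min*}_{ijt}+\hat\xi^{\max*}_{ijt})$ term and the $2B_{ij}(S_{ik}-S_{jk})\big(P^{\text{Trans},\min}_{ij}\hat\xi^{\min*}_{ijt}+P^{\text{Trans},\max}_{ij}\hat\xi^{\max*}_{ijt}\big)$ term.
\end{itemize}
Evaluating at $\Psi^*$ and summing over $(i,j)\in\mc L$ yields \eqref{eq: foc_d}. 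The quantity $r^d_{kt}$ is defined as this partial derivative, and by Corollary~\ref{cor:vector-envelope} (with strong duality from Proposition~\ref{th: dual}) it equals $\partial V^{\ucsdp}/\partial D_{kt}$ wherever the value function is differentiable.

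The main obstacle is bookkeeping rather than analysis. I need to check that the dual of $\uc^\sdpbd$ indeed has the form $\fd$, with the slack-augmented equality constraints and sign conventions. In particular, the right-hand sides in \eqref{eq: uc_sdp_2}--\eqref{eq: uc_sdp_3} must be written with the demand term moved to the right exactly as displayed, so that no sign flip appears in the $\xi$ terms. I also need to confirm that no other right-hand side (e.g.\ bounds on line-slack variables) depends on $D_{kt}$. If such a bound did depend on demand, I would choose a demand-independent valid $U_i$ (e.g.\ $P^{\text{Trans},\max}_{ij}-P^{\text{Trans},\min}_{ij}$ for the line slack), so that the formula is unaffected.
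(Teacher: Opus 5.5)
Your proposal is correct and follows essentially the same route as the paper: treat the right-hand sides of \eqref{eq: uc_sdp_1}--\eqref{eq: uc_sdp_3} and their RLT counterparts \eqref{eq: uc_sdp_5}--\eqref{eq: uc_sdp_7} as the only demand-dependent terms of the dual objective, then differentiate them at $\Psi^*$. The paper first expands these terms explicitly (separating $k'=k$ from $k'\neq k$) and then differentiates, whereas you apply the chain rule to the squared right-hand sides directly; this is a cosmetic difference. Your remark that the bounds $U_i$ must be chosen independently of demand is a point the paper leaves implicit.
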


    Note that for ease of exposition, we treat the value function of the strengthened DCUC SDP relaxation as a function of a single parameter $D_{kt}$ when deriving prices in Sections \ref{ch: pricing_sdp} and \ref{ch: ac}. The same derivation extends to vector-valued demand parameters, as indicated by Corollary \ref{cor:vector-envelope}. This vector perspective is used in Section \ref{ch: loc}, where the value function is written in terms of a demand vector.

    In addition to the price $r^d_{kt}$, each generator also receives an uplift payment for its LOC \cite{azizan2020optimal}, which equals the extra profit it earns if it maximizes its own profit instead of following the optimal dispatch decisions of $\uc$, given the price vector $\vrd=[r^d_{kt}]_{k\in\N, t\in\T}$. We denote the LOC of $g$ as $U_g(\vrd)$. LOC essentially measures the incentive deviation under posted prices. A detailed discussion of LOC is included in Section \ref{ch: loc}, where a bound is provided for the total LOC that the system operator pays. 

    Formally, we define the SDP-based mechanism for DCUC as follows:
    
    \begin{definition}[SDP-based mechanism for DCUC] \label{dcuc_pricing}
        Under the SDP-based mechanism DCUC, the system operator follows optimal dispatch decisions of $\uc$. Prices are obtained via solving the SDP relaxation $\uc^\sdpbd$. The system operator:
        \BI
        \I pays $\pi^G_{gt} = r^d_{k(g),t} p^*_{gt}$ to generator $g$ at each time $t\in\T$, where $k(g)$ denotes the node at which $g$ is located, and $p^*_{gt}$ is the optimal production level from the MILP $\uc$. In addition, the system operator pays an LOC uplift payment $U_g(\vrd)$ to generator $g$; and
        \I collects $\pi^L_t = \sum_{k\in\mc{N}}r^d_{kt} D_{kt}$ from the load at each time $t\in\T$. In addition, the system operator collects a price adder of $\frac{\sum_{g\in\G}U_g(\vrd)}{\sum_{k\in\mc{N}, t\in\T }D_{kt}}$ per unit of demand.
        \EI 
    \end{definition}
Observe that Definition \ref{dcuc_pricing}'s pricing mechanism is cost recovering (i.e., ensures that all generators recover their variable and fixed costs) by construction. Moreover, since the demand is inelastic, the price adder does not affect consumption levels. 

    \begin{remark}
        In instances where there are multiple dual-optimal solutions to $\uc^\sdpbd$, one can use any dual-optimal solution for pricing. Thus, in our numerical experiments, we use the dual solutions provided by the SDP solver without checking for uniqueness. One could also use the lexicographically smallest prices by solving a second optimization problem, analogous to the Pareto-optimal Benders cuts of \citet{magnanti1981accelerating}.
    \end{remark}

    Since the SDP-based price measures marginal effects of local demand at differentiable points of the value function $V^{\ucsdp}(D_{kt})$, it is natural to ask when the value function is differentiable. {\cgreen Proposition \ref{th: continuous}} shows that $V^{\ucsdp}(D_{kt})$ is differentiable everywhere except possibly at finitely many points. {\cgreen It is proved by showing that $V^{\ucsdp}(D_{kt})$ is a semialgebraic function, and thus is nondifferentiable at finitely many points due to the monotonicity lemma \citep[Lemma 2.3]{lee2017generic}.}

    \begin{proposition}[{\cgreen Finite nondifferentiability} of $V^{\ucsdp}(D_{kt})$]\label{th: continuous}
    Fix all demand entries except $D_{kt}$, and let $[\underline{D}_{kt}, \overline{D}_{kt}]$ be any compact interval such that the strengthened SDP relaxation $\uc^\sdpbd(D_{kt})$ is feasible for every $D_{kt}\in [\underline{D}_{kt}, \overline{D}_{kt}]$. Then, the value function $V^{\ucsdp}(D_{kt})$ is differentiable everywhere on $[\underline{D}_{kt}, \overline{D}_{kt}]$ except possibly at finitely many points. 
    \end{proposition}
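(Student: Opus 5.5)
The plan is to show that $V^{\ucsdp}$, restricted to the compact interval $[\underline{D}_{kt}, \overline{D}_{kt}]$, is a \emph{semialgebraic} function of the scalar $D_{kt}$. Once that is done, the monotonicity lemma \citep[Lemma 2.3]{lee2017generic} finishes the argument. That lemma says a semialgebraic function of one real variable admits a finite partition of its domain into points and open intervals, and on each open interval the function is continuous and either constant or strictly monotone and $C^1$ (indeed $C^\infty$). Nondifferentiability can therefore occur only at the finitely many partition endpoints.

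First, I will write the graph of the feasible-set map as a semialgebraic set. Consider the set $\mc{F}:=\{(D_{kt},\bm{Y}) : D_{kt}\in[\underline{D}_{kt}, \overline{D}_{kt}],\ \bm{Y}\in\feas(\uc^\sdpbd(D_{kt}))\}$. The right-hand sides in \eqref{eq: uc_sdp_1}--\eqref{eq: uc_sdp_8} are affine or quadratic polynomials in $D_{kt}$, since the other demands are fixed. The constraints $x_i=X_{ii}$, $X_{ii}\le U_i^2$ and $\bm{Y}\ge 0$ are polynomial equalities and inequalities. The PSD constraint $\bm{Y}\in\mc{S}^+$ is semialgebraic, for example through nonnegativity of all principal minors, or as $\{\bm{Y}: \vz^\top\bm{Y}\vz\ge0\ \forall \vz\}$, which is a first-order formula. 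Hence $\mc{F}$ is semialgebraic.

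Second, I will express the value function by a first-order formula. Its graph is
\[
\{(D,v): \exists \bm{Y}\ ((D,\bm{Y})\in\mc{F},\ f(\bm{Y})=v) \ \wedge\ \forall \bm{Y}'\ ((D,\bm{Y}')\in\mc{F}\Rightarrow f(\bm{Y}')\ge v)\},
\]
where $f$ is the linear objective. By the Tarski--Seidenberg theorem this set is semialgebraic. For the formula to define a genuine real-valued function, I need the minimum to be attained at every $D$ in the interval. Feasibility on the interval is assumed. Boundedness of $\feas(\uc^\sdpbd(D_{kt}))$ follows as in the proof of Proposition \ref{th: dual}: diagonal entries are bounded, via $X_{ii}\le U_i^2$ for continuous variables and $X_{ii}=x_i\le 1$ for binaries, since the binaries' upper bounds are included in \eqref{eq: uc_sdp_4}. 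PSD-ness then bounds the off-diagonal entries by $|Y_{ij}|\le\sqrt{Y_{ii}Y_{jj}}$. The feasible set is also closed, so it is compact and the minimum is attained. Thus $V^{\ucsdp}$ is a semialgebraic function on the interval.

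The main obstacle is this attainment and compactness step, together with making sure every constraint, including the objective coefficients, is genuinely polynomial in $D_{kt}$ with real-algebraic structure. The latter is automatic over the reals, because semialgebraic sets permit arbitrary real coefficients. Once that is in place, applying the monotonicity lemma on the compact interval yields finitely many exceptional points, and the function is differentiable at all other points, including one-sided differentiability at the interval endpoints.
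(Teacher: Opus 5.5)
Your proposal is correct and follows essentially the paper's route: compactness of the feasible set (as in the proof of Proposition~\ref{th: dual}) for attainment, semialgebraicity of the value function via Tarski--Seidenberg, and then the monotonicity lemma. Your quantified formula does the job of the paper's $\mathcal{A}\setminus\mathcal{B}$ construction, and your remark that the right-hand sides are jointly polynomial in $D_{kt}$ is slightly more careful than the paper's fiberwise argument. The one overreach is the closing claim of one-sided differentiability at the interval endpoints, which the monotonicity lemma does not give (a semialgebraic function such as $-\sqrt{D}$ has an infinite one-sided derivative at $0$). This is harmless, since the endpoints can simply be counted among the finitely many exceptional points.
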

    \begin{proof}{Proof of Proposition \ref{th: continuous}}
    Let $\bm{w}$ denote the full vector of primal decision variables in $\uc^\sdpbd(D_{kt})$, including all lifted matrix entries (e.g., taking $\mathrm{Vec}(\bm{X})$ where appropriate), and rewrite the objective function as $\vc^\top \bm{w}$. Then, by Assumption \ref{ass: bound}, all continuous variables in $\bm{w}$ are contained in a compact set, as shown in detail in the proof of Proposition \ref{th: dual}. Hence, since $V^{\mathrm{UC\text{-}SDP}}(D_{kt})$ is feasible for all $D_{kt}\in [\underline{D}_{kt}, \overline{D}_{kt}]$ and has a {\cgreen compact} feasible region, it attains a finite value for all $D_{kt}$ in the interval. 

    \noindent{\it Step 1: } We show that $V^{\mathrm{UC\text{-}SDP}}(D_{kt})$ is a semialgebraic function on $[\underline{D}_{kt}, \overline{D}_{kt}]$. We claim that the set
    \[
    \mathcal S \;:=\;\{(D_{kt},\bm{w},z)\in \R\times\R^p\times\R:\ D_{kt}\in [\underline{D}_{kt}, \overline{D}_{kt}], \bm{w}\in F(D_{kt}),\ z=\vc^\top\vw\}
    \]
    is semialgebraic. Indeed, for any fixed \(D_{kt}\), the feasible region \(F(D_{kt})\) of \(\uc^{\mathrm{SDP\text{-}bd}}(D_{kt})\) is the intersection of a spectrahedron and a polyhedron, with both being semialgebraic. Since semialgebraic sets are closed under finite intersections \citep[Section A.4.4]{blekherman2012semidefinite}, \(F(D_{kt})\) is also semialgebraic. In addition, $\underline D_{kt}\le D_{kt}\le \overline D_{kt}$ and $z=\vc^\top \vw$ are linear inequalities/equalities. It follows that \(\mc{S}\) is a semialgebraic subset of \(\mathbb{R}^{p+2}\).

    
    Let $\mathcal A\subseteq \R^2$ be the projection of $\mathcal S$ onto the $(D_{kt},z)$ coordinates. This set is also semialgebraic, as by the Tarski-Seidenberg theorem, semialgebraic sets are closed under projections \citep[Theorem A.49]{blekherman2012semidefinite}. Similarly, define
\[
\mathcal B \;:=\;\{(D_{kt},z)\in \R^2:\ D_{kt}\in [\underline{D}_{kt}, \overline{D}_{kt}], z= \vc^\top\vw, \st~\vw\in F(D_{kt}) \text{ and }\exists\,\hat{\bm{w}}\in F(D_{kt})\text{ with }\vc^\top\hat{\vw}<z\},
\]
which is the set of $(D_{kt},z)$ pairs where $z$ is not the optimal value at $D_{kt}$. The set $\mc{B}$ is also semialgebraic, since it equals a projection of $\tilde{\mc{B}}$, where $\tilde{\mc{B}} := \{(D_{kt},\bm{w},z)\in \R\times\R^p\times\R:\ D_{kt}\in [\underline{D}_{kt}, \overline{D}_{kt}], \bm{w}\in F(D_{kt}),\ z=\vc^\top\vw, \st~\exists\,\hat{\bm{w}}\in F(D_{kt})\text{ with }\vc^\top\hat{\vw}<z\}$ is a semialgebraic set.

Then, because the optimum of $\uc^\sdp$ is attainable in $[\underline{D}_{kt}, \overline{D}_{kt}]$, we have
\[
\mathrm{graph}(V^{\mathrm{UC\text{-}SDP}}(D_{kt}))\;=\;\mathcal A\setminus \mathcal B.
\]
Since semialgebraic sets are closed under set difference (as a result of being closed under finite intersection and complementation),
$\mathrm{graph}(V^{\mathrm{UC\text{-}SDP}}(D_{kt}))$ is semialgebraic. Therefore, \ $V^{\mathrm{UC\text{-}SDP}}(D_{kt})$ is a semialgebraic function of one real
variable on $[\underline{D}_{kt}, \overline{D}_{kt}]$. 

\noindent{\it Step 2:} To prove $V^{\mathrm{UC\text{-}SDP}}(D_{kt})$ is differentiable except possibly at the finitely many breakpoints, we {\cgreen use the monotonicity lemma \citep[Lemma 2.3]{lee2017generic}, which implies that for the semialgebraic function $V^{\mathrm{UC\text{-}SDP}}(D_{kt})$,} there exists a finite partition
$\underline D_{kt} = d_0 < d_1 < \cdots < d_N = \overline D_{kt}$
such that $V^{\mathrm{UC\text{-}SDP}}(D_{kt})$ is continuously differentiable on each open interval $(d_{i-1},d_i)$.
In particular, $V^{\mathrm{UC\text{-}SDP}}(D_{kt})$ is differentiable except possibly at the finitely many breakpoints
$\{d_1,\dots,d_{N-1}\}$, which proves the second claim. 
%
\qed
    \end{proof}

Importantly, the envelope theorem sensitivity bounds {\cgreen and finite nondifferentiability} underlying our pricing scheme do not rely on the SDP relaxation: they also hold for the original MBQP model via its exact CPP reformulation, since value functions of MBQP and CPP are equivalent and CPP has strong duality under Assumption \ref{ass: bound} \citep{brown2024copositive}. 

{\cgreen As a side note, value functions of MBQP and its SDP relaxation can also be continuous under additional structural conditions, such as sufficiently large value-of-lost-load penalties in $\uc$'s objective, which is common in unit commitment and ensures absolute continuity of value functions via Theorem 2 of \cite{milgrom2002envelope}.}

Proposition \ref{th: continuous} shows differentiability generally holds when a single demand parameter varies. In Corollary \ref{cor:multi-demand-abscont}, we extend this result to demand paths, ensuring that derivatives generally exist under vector-valued demand perturbations.

\begin{corollary}[{\cgreen Finite nondifferentiability} along demand paths]\label{cor:multi-demand-abscont}
Let $\vD \in\mathbb{R}^{|N||T|}$ be a baseline demand vector and let $\Delta \vD\in\mathbb{R}^{|N||T|}$
be any perturbation direction. For $\theta\in\mathbb{R}$, define the affine demand path $D(\theta):=\vD +\theta\,\Delta \vD$. Further, let $[\underline\theta,\overline\theta]$ be any compact interval such that the strengthened SDP relaxation $\uc^\sdpbd(D(\theta))$ is feasible for every $\theta\in[\underline\theta,\overline\theta]$. Then the scalar value function $V^{\mathrm{UC\text{-}SDP}}\bigl(D(\theta)\bigr)$ is differentiable everywhere on $[\underline\theta,\overline\theta]$ except possibly at finitely many points.
\end{corollary}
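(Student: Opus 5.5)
The plan is to repeat the two-step argument from the proof of Proposition \ref{th: continuous}, now with the scalar parameter $\theta$ in place of $D_{kt}$. The only change is that every demand entry becomes an affine function of $\theta$. Define $\phi(\theta) := V^{\mathrm{UC\text{-}SDP}}(D(\theta))$ on $[\underline\theta,\overline\theta]$. As in that proof, let $\bm{w}$ be the full vector of primal variables of $\uc^\sdpbd(D(\theta))$, including $\mathrm{Vec}(\bm{X})$, and write the objective as $\vc^\top\bm{w}$. By Assumption \ref{ass: bound} and the compactness argument from the proof of Proposition \ref{th: dual}, the feasible region $F(D(\theta))$ is compact for each $\theta$. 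It is also nonempty by hypothesis, so $\phi(\theta)$ is finite and attained on the whole interval.

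\textit{Step 1 (semialgebraicity).} Consider the set
\[
\mathcal S_\theta := \{(\theta,\bm{w},z): \theta\in[\underline\theta,\overline\theta],\ \bm{w}\in F(D(\theta)),\ z=\vc^\top\bm{w}\}.
\]
The key observation is that the right-hand sides of $\uc^\sdpbd$ are polynomial in the demand entries. The linear constraints \eqref{eq: uc_sdp_1}--\eqref{eq: uc_sdp_3} are affine in $D$, and the RLT constraints \eqref{eq: uc_sdp_5}--\eqref{eq: uc_sdp_7} are quadratic in $D$. Since $D(\theta)=\vD+\theta\,\Delta\vD$ is affine, each constraint is a polynomial equality or inequality jointly in $(\theta,\bm{w})$. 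The PSD constraint on $\bm{Y}$ does not involve $\theta$ and is semialgebraic, for example through nonnegativity of all principal minors. Hence $\mathcal S_\theta$ is a finite intersection of semialgebraic sets and is therefore semialgebraic.

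With this, the construction of Proposition \ref{th: continuous} carries over verbatim. Let $\mathcal A$ be the projection of $\mathcal S_\theta$ onto the $(\theta,z)$ coordinates. Let $\mathcal B$ be the projection of the set of tuples $(\theta,\bm{w},z)\in\mathcal S_\theta$ for which some $\hat{\bm{w}}\in F(D(\theta))$ has $\vc^\top\hat{\bm{w}}<z$. Both are semialgebraic by Tarski--Seidenberg, since the existential quantifier over $\hat{\bm w}$ is itself a projection. Because the optimum is attained, $\mathrm{graph}(\phi)=\mathcal A\setminus\mathcal B$, and so $\phi$ is a semialgebraic function of one variable.

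\textit{Step 2 (finite nondifferentiability).} By the monotonicity lemma \citep[Lemma 2.3]{lee2017generic}, there is a finite partition $\underline\theta=d_0<\cdots<d_N=\overline\theta$ such that $\phi$ is $C^1$ on each open interval $(d_{i-1},d_i)$. Therefore $\phi$ is differentiable everywhere on $[\underline\theta,\overline\theta]$ except possibly at the breakpoints $d_1,\dots,d_{N-1}$, which proves the corollary.

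The main obstacle is making the joint semialgebraicity in $(\theta,\bm w)$ explicit. In Proposition \ref{th: continuous} this was argued somewhat informally, by considering a fixed $D_{kt}$. Here I would state it carefully by writing the squared right-hand sides, such as $\bigl(\sum_k D_{kt}(\theta)\bigr)^2$, as explicit polynomials in $\theta$. Nothing else in the argument needs to change.
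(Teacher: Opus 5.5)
Your proposal is correct and takes essentially the same approach as the paper: it reruns the two-step argument of Proposition \ref{th: continuous} with $\theta$ as the scalar parameter, first showing the value-function graph $\mathcal A\setminus\mathcal B$ is semialgebraic via Tarski--Seidenberg and then applying the monotonicity lemma. You also observe that the right-hand sides are polynomial in $\theta$, so the parametrized feasible set is jointly semialgebraic in $(\theta,\bm{w})$; this makes precise a step the paper only asserts (a minor aside: the endpoints $d_0,d_N$ should also be listed as possible exceptional points, which does not affect finiteness).
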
 

\section{Pricing for ACUC}\label{ch: ac}
    We now extend SDP-based pricing to the ACUC problem, an area of growing interest among researchers and practitioners. In the ACUC problem, DC flow balance and limit constraints in $\uc$ \eqref{eq: uc_model_sf_1}-\eqref{eq: uc_model_sf_3} are replaced with their AC counterpart \citep{zohrizadeh2018sequential}:
    \begin{subequations}\label{eq: ac_flow}
    \begin{alignat}{4}
        &\sum_{g\in\mc{G}_k} s_{gt} - \sum_{(k, j)\in \mc{L}\cup\mc{L}^R} s_{kjt}= D_{kt} &~~&\forall k\in \mc{N}, t\in\mc{T} && (\lambda_{kt}) &&\label{eq: ac_flow_1}\\
       & s_{ijt} = v_{it}(v_{it}^c-v_{jt}^c)Y^c_{ij} &&\forall (i, j)\in\mc{L}, t\in\mc{T}\label{eq: ac_flow_2_1}\\
       & s_{jit} = v_{jt}(v_{jt}^c-v_{it}^c)Y^c_{ij} &&\forall (i, j)\in\mc{L}, t\in\mc{T}\label{eq: ac_flow_2_2}\\
       & \vmin \leq |v_{kt}|\leq \vmax &&\forall k\in \mc{N}, t\in\mc{T} \label{eq: ac_flow_2_3} \\
        & |s_{ijt}|^2 \leq (\smax)^2 && \forall (i,j)\in\mc{L}\cup\mc{L}^R, t\in\mc{T}, \label{eq: ac_flow_3}
    \end{alignat}
    \end{subequations}
    where $\mc{L}^R$ denotes the set of transmission lines $\mc{L}$ in reversed direction. For the variables, $s_{gt} = p_{gt}+\img q_{gt}$ is the complex AC power generation, where $p_{gt}$ and $q_{gt}$ respectively denote the real and reactive electricity production of generator $g$ during time period $t$. Similarly, $s_{ijt} = p_{ijt}+\img q_{ijt}$ is the complex AC power flow from bus $i$ to bus $j$ at $t$, and $v_{it}=\vr_{it}+\img \vi_{it}$ is the complex AC voltage at bus $i$ and time $t$. The complex dual variable $\lambda_{kt}:= \lambdar_{kt} + \img \lambdai_{kt}$. Constraints \eqref{eq: ac_flow_1} are the network flow constraints, which ensure that the total demand equals the total production minus total outflow at each bus and time period. The coefficient $D_{kt} = \Dr_{kt}+\img \Di_{kt}$ is the complex AC power demand. Constraints \eqref{eq: ac_flow_2_1} and \eqref{eq: ac_flow_2_2} model the power flow on each line, where $Y_{ij} = G_{ij}+\img B_{ij}$ is the complex admittance. Constraints \eqref{eq: ac_flow_2_3} and \eqref{eq: ac_flow_3} respectively model voltage limit on each bus and thermal limit on each line. 

    In addition, constraint \eqref{eq: reactive} is needed to bound reactive power output:
    \begin{align}\label{eq: reactive}
    \qmin z_{gt}  \le q_{gt} \leq \qmax z_{gt} &&\forall g\in\mc{G}, t\in\mc{T}.
    \end{align}

    The ACUC model is defined as follows: \[\ac := \min\left\{\sum_{g\in\mc{G}}\sum_{t\in\mc{T}} \left(C^p_g p_{gt} + C^{u}_g u_{gt} + C^{v}_g v_{gt} + C_g^z z_{gt}\right)|\eqref{eq: ac_flow}, \eqref{eq: reactive}, \eqref{eq: uc_model_sf_4}-\eqref{eq: uc_model_sf_13}\right\}.\]
    \subsection{SDP Relaxation for ACUC}\label{ch: ac_sdp}
    Problem $\ac$ contains both AC- and UC-related nonconvex constraints, resulting in a nonconvex feasible region. Our goal in this section is to obtain an SDP relaxation of $\ac$, which we later use to derive SDP-based pricing for ACUC.
    
    To convexify AC-related nonconvex nonlinear constraints \eqref{eq: ac_flow_2_1} - \eqref{eq: ac_flow_2_3}, we use the well-known SDP-based rectangular formulation in the literature \citep{taylor}. Specifically, an SDP relaxation that lifts the bilinear terms in $v_{it}$ is used \citep{taylor}. In this SDP relaxation, introduce $\bm{W}_t = \begin{bmatrix}\vWr_t &\vWc_t\\ \bm{W}^{C\top}_t &\vWi_t \end{bmatrix}$, where $\vWr_t = \vvr_t \vv^{R\top}_t$, $\vWi_t = \vvi_t \vv^{I\top}_t$, and $\vWc_t=\vvr_t \vv^{I\top}_t$. In other words, $\bm{W}_t = \vv \vv^\top$ where $\vv=\begin{bmatrix}\vv^{R} \\\vv^{I}\end{bmatrix}$ and is thus rank 1.
    Constraints \eqref{eq: ac_flow_2_1}-\eqref{eq: ac_flow_2_3} are relaxed as follows:
    \begin{subequations}\label{eq: rectangular}
    \begin{align}
    & p_{ijt} = G_{ij} (\Wr_{iit} + \Wi_{iit})-G_{ij}(\Wr_{ijt} + \Wi_{ijt}) - B_{ij}(\Wc_{jit} - \Wc_{ijt}) && \forall (i, j)\in\mc{L}, t\in\mc{T}\label{eq: rectangular_1}\\
    & q_{ijt} = - B_{ij}(\Wr_{iit} + \Wi_{iit}) + B_{ij}(\Wr_{ijt} + \Wi_{ijt}) - G_{ij}(\Wc_{jit} - \Wc_{ijt}) &&\forall (i, j)\in\mc{L}, t\in\mc{T}\label{eq: rectangular_2}\\\
    & p_{jit} = G_{ij} (\Wr_{jjt} + \Wi_{jjt})-G_{ij}(\Wr_{ijt} + \Wi_{ijt}) - B_{ij}(\Wc_{ijt} - \Wc_{jit}) &&\forall (i, j)\in\mc{L}, t\in\mc{T}\label{eq: rectangular_3}\\
    & q_{jit} = -B_{ij}(\Wr_{jjt} + \Wi_{jjt}) + B_{ij}(\Wr_{ijt} + \Wi_{ijt}) - G_{ij}(\Wc_{ijt} - \Wc_{jit})&& \forall (i, j)\in\mc{L}, t\in\mc{T}\label{eq: rectangular_4}\\
    & (\vmin)^2\leq \Wr_{kkt}+\Wi_{kkt}\leq (\vmax)^2 && \forall k\in\mc{N}, t\in\mc{T}\label{eq: rectangular_5}\\
    & \bm{W}_t\in \S^+_{2|\mc{N}|} && \forall t\in\mc{T}, \label{eq: rectangular_6}
    \end{align}
    \end{subequations}
    where constraints \eqref{eq: rectangular_1} and \eqref{eq: rectangular_2} are relaxations of \eqref{eq: ac_flow_2_1}. Constraints \eqref{eq: rectangular_3} and \eqref{eq: rectangular_4} are relaxations of \eqref{eq: ac_flow_2_2}. Constraints \eqref{eq: rectangular_5} are relaxations of \eqref{eq: ac_flow_2_3}. Finally, the conic constraints \eqref{eq: rectangular_6} relaxes the constraints $\bm{W}_t = \vv \vv^\top, t\in\T$. 

    With the relaxation of \eqref{eq: ac_flow_2_1} - \eqref{eq: ac_flow_2_3}, the only nonconvexity remaining in $\ac$ is driven by binary variables. We employ the same SDP relaxation technique as for DCUC on the UC-related mixed-integer linear constraints in $\ac$, including \eqref{eq: ac_flow_1}, \eqref{eq: uc_model_sf_4}-\eqref{eq: uc_model_sf_13}, and \eqref{eq: reactive}. To reduce the dimensionality of the lifted matrix, we aim to avoid introducing AC power-flow variables, such as $s_{kjt}$, in the lifting. Thus, we replace \eqref{eq: ac_flow_1} with the following aggregated demand constraints for real power when deriving the corresponding RLT constraints:
    \begin{align}\label{eq: agg_demand}
        \sum_{g\in\mc{G}} p_{gt} - \ploss_t = \sum_{k\in\mc{N}}D^R_{kt} &&\forall t\in\mc{T},
    \end{align}
    where $\ploss_t$ is the real total line loss at $t$, satisfying:
    \begin{align}\label{eq: loss}
        \ploss_t = \sum_{k\in\mc{N}}\sum_{(k, j)\in \mc{L}\cup\mc{L}^R} p_{kjt} &&\forall t\in\mc{T}.
    \end{align}

    For reactive power, the corresponding RLT constraints of aggregated demand constraints can be similarly derived. In our experiments, we observe that without such constraints, the SDP relaxation for $\ac$ is already strong compared with the alternative, and thus we do not include them.

    To obtain an SDP relaxation for the UC-related constraints \eqref{eq: loss}, \eqref{eq: reactive}, and \eqref{eq: uc_model_sf_4}-\eqref{eq: uc_model_sf_13}, we first convert inequality constraints to equality constraints, and then reformulate them with the lifting \eqref{eq: sdp_2}-\eqref{eq: sdp_6}. Then, our SDP relaxation for ACUC, denoted as $\ac^\sdp$, is as follows:
    
    \begin{subequations}\label{eq: ac_sdp}
    \begin{align}
    {\cgreen \ac^\sdp := }\min \quad& \sum_{g\in\mc{G}}\sum_{t\in\mc{T}} \left(C^p_g p_{gt} + C^{u}_g u_{gt} + C^{v}_g v_{gt} + C_g^z z_{gt}\right)\hspace{-3cm}\label{eq: ac_sdp_0}\\
    \st \quad & \eqref{eq: ac_flow_3}, \eqref{eq: rectangular}, \eqref{eq: loss} \label{eq: ac_sdp_1}\\
    & \sum_{g\in\mc{G}_k} s_{gt} - \sum_{(k, j)\in \mc{L}\cup\mc{L}^R} s_{kjt}= D_{kt} &&\forall k\in \mc{N}, t\in\mc{T}\label{eq: ac_sdp_2}\\
    &\aphitop_{egt} \vx = b_{egt} && \forall e= 1,...,m; g\in\mc{G}, t\in\mc{T}\label{eq: ac_sdp_3}\\
    &\tr(\vh^{\rm R}_t \vh^{\rm R \top}_t \bm{X}) = \left(\sum_{k\in\mc{N}}D^{\rm R}_{kt}\right)^2 &&\forall t\in\mc{T}\label{eq: ac_sdp_4}\\
    &\tr(\aphi_{egt} \aphitop_{egt} \bm{X}) = b_{egt}^2&&\forall e= 1,...,m; g\in\mc{G}, t\in\mc{T}\label{eq: ac_sdp_5}\\
    &x_i=X_{ii} && i\in\mc{B}\label{eq: ac_sdp_6}\\
    & X_{ii}\leq U_i^2 &&i\in [n]\sm\mc{B}\label{eq: ac_sdp_6_2}\\
    &\bm{Y}\in\mc{S}^+_{n+1}, \bm{Y}\geq 0,\label{eq: ac_sdp_7}
    \end{align}
    \end{subequations}
    where $\vx$ denotes the vector of all variables in UC-related constraints, including slack variables and $\ploss$. $\bm{Y}: = \begin{bmatrix} 1 &\vx^\top\\ \vx & \bm{X}\end{bmatrix}$. Constraints \eqref{eq: ac_sdp_1} are convexified AC-related constraints and definition for $\ploss$. Constraints \eqref{eq: ac_sdp_2} are flow balance constraints. Constraints \eqref{eq: ac_sdp_4} are RLT constraints obtained by squaring \eqref{eq: agg_demand}, where $\vh^{\rm R}_t$ is the vector of coefficients for real power variables in \eqref{eq: agg_demand}, i.e., entries of $\vh^{\rm R}_t$ equal 1 for $p_{gt},g\in\mc{G}$, equal $-1$ for $\ploss_t$, and 0 otherwise.
    Constraints \eqref{eq: ac_sdp_6_2} impose upper bounds on diagonal terms of $X_{ii}$ that correspond to continuous variables in $\vx$. The remaining constraints have the same definitions as their counterparts in $\uc^\sdp$, with one caveat that the operational constraints now include an equality version of \eqref{eq: reactive}.
    \subsection{Envelope Theorem for ACUC Pricing}\label{ch: ac_price}
    We now extend the results for DCUC pricing in Section \ref{ch: pricing_sdp} to ACUC. First, we show that strong duality holds for $\ac^\sdp$, as its feasible region is bounded and consists of two cones and one hyperplane:

    \begin{proposition}[Strong duality for $\ac^\sdp$]\label{th: acuc_strong_dual} Suppose that $\ac$ is feasible. Then, strong duality holds between $\ac^\sdp$ and its dual.
    \end{proposition}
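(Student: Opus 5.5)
The plan is to reuse the argument behind Proposition \ref{th: dual}: show that $\ac^\sdp$ is feasible and has a compact feasible region, then invoke the strong duality result of \citet{kim2025strong}. That result guarantees zero duality gap for conic programs whose feasible set is the intersection of closed convex cones with an affine subspace, provided the problem is feasible and the primal feasible set is nonempty and compact. Write the feasible set of $\ac^\sdp$ as the intersection of two sets. The first is the product cone $\mc{K} := \{\bm{Y}\in\mc{S}^+_{n+1}, \bm{Y}\ge 0\}\times\prod_{t\in\T}\S^+_{2|\N|}\times$ (the second-order cones encoding \eqref{eq: ac_flow_3}) $\times$ a nonnegative orthant for slacks of the remaining inequalities \eqref{eq: rectangular_5}, \eqref{eq: ac_sdp_6_2}. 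The second is the affine subspace cut out by \eqref{eq: rectangular_1}--\eqref{eq: rectangular_4}, \eqref{eq: loss}, \eqref{eq: ac_sdp_2}--\eqref{eq: ac_sdp_6}, together with $Y_{11}=1$. The result is a problem of exactly the form handled in the proof of Proposition \ref{th: dual}.

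\emph{Feasibility.} Take any feasible solution of $\ac$, with voltages $\vv_t$, flows $s_{ijt}$, generation $s_{gt}$, and binaries $u,v,z$. Collect the UC-related variables (including slacks and $\ploss_t$ from \eqref{eq: loss}) into $\vx$. Set $\bm{X}=\vx\vx^\top$ and $\bm{W}_t=\vv_t\vv_t^\top$ with $\vv_t=(\vvr_t;\vvi_t)$.

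The rank-one lifting then satisfies the constraints as follows.
\begin{enumerate}
\item $\bm{Y}$ is PSD, and it is nonnegative because $\vx\ge 0$.
\item The RLT constraints \eqref{eq: ac_sdp_4}, \eqref{eq: ac_sdp_5} hold by squaring \eqref{eq: agg_demand} and the operational equalities. Here \eqref{eq: agg_demand} follows from summing \eqref{eq: ac_flow_1} over $k$ and taking real parts.
\item $X_{ii}=x_i^2=x_i$ holds for binaries.
\item $X_{ii}=x_i^2\le U_i^2$ holds for continuous entries.
\item \eqref{eq: rectangular} holds because it is obtained by expanding \eqref{eq: ac_flow_2_1}--\eqref{eq: ac_flow_2_3} into real and imaginary parts and substituting $\bm{W}_t=\vv_t\vv_t^\top$.
\end{enumerate}

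\emph{Boundedness.} For the UC block, the argument of Proposition \ref{th: dual} applies verbatim. Assumption \ref{ass: bound} gives $0\le x_i\le U_i$ for continuous variables, and binaries lie in $[0,1]$ via \eqref{eq: ac_sdp_3}. Slack variables are bounded through their defining equalities, and $\ploss_t$ is bounded through \eqref{eq: agg_demand}. Consequently $\mathrm{diag}(\bm{X})$ is bounded by \eqref{eq: ac_sdp_6} and \eqref{eq: ac_sdp_6_2}. Positive semidefiniteness of $\bm{Y}$ then bounds every off-diagonal entry via $|X_{ij}|\le\sqrt{X_{ii}X_{jj}}$.

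For the AC block, \eqref{eq: rectangular_5} bounds the diagonal of $\bm{W}_t$, since $\Wr_{kkt},\Wi_{kkt}\ge0$ by PSD and their sum is at most $(\vmax)^2$. PSD again bounds all entries of $\bm{W}_t$. Next, \eqref{eq: rectangular_1}--\eqref{eq: rectangular_4} express each line flow $p_{ijt},q_{ijt}$ linearly in $\bm{W}_t$, so the flows are bounded; \eqref{eq: ac_flow_3} gives the same conclusion. Finally, \eqref{eq: ac_sdp_2} shows that $s_{gt}$ aggregated per bus is bounded. Individual $p_{gt},q_{gt}$ are already bounded by \eqref{eq: uc_model_sf_5} and \eqref{eq: reactive}, which are part of the operational constraints. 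Since every defining constraint is closed, the feasible region is compact.

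\emph{Main obstacle.} The hardest step is checking that $\ac^\sdp$ genuinely fits the template required by \citet{kim2025strong}. Unlike $\mc{P}^\sdpbd$, it involves several cones (DNN, PSD blocks $\bm{W}_t$, second-order cones) and free complex variables. I would handle this in two moves. First, split complex variables into real and imaginary parts. Second, absorb the product of cones into a single closed convex cone, which can be a block-diagonal PSD cone intersected with a polyhedral cone, after homogenizing with the entry $Y_{11}=1$. This puts the problem in the form ``cone $\cap$ cone $\cap$ hyperplane'' used in Proposition \ref{th: dual}. With feasibility and compactness established, strong duality then follows exactly as before.
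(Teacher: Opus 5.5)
Your feasibility argument (rank-one lifting) and your compactness argument (bounded diagonals of $\bm{X}$ and $\bm{W}_t$, then $2\times 2$ minors, with line flows bounded through \eqref{eq: rectangular_1}--\eqref{eq: rectangular_4}) match the paper's. Your plan to homogenize into ``one hyperplane plus two cones'' is also the paper's route. The gap is your final sentence.

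\citet{kim2025strong} is not a theorem about a general ``cones $\cap$ affine subspace'' problem, as your first paragraph says. As used in the paper, it concerns $\min\{\langle\bm{C},\tilde{\bm{Y}}\rangle:\ \langle\bm{H},\tilde{\bm{Y}}\rangle=1,\ \tilde{\bm{Y}}\in\mathcal{K}_1\cap\mathcal{K}_2\}$. It gives a zero gap between \emph{that} problem and \emph{its own} dual, $\max\{\mu:\ \bm{C}-\mu\bm{H}\in\mathcal{K}_1^*+\mathcal{K}_2^*\}$. After homogenizing, you therefore know strong duality only for the homogenized pair. You do not yet know it for $\ac^{\sdp}$ and its Lagrangian dual, whose objective multiplies separate dual variables by constants such as $D_{kt}$, $b_{egt}$, $U_i^2$, $(\vmax)^2$, $\smax$. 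That Lagrangian dual is the one whose objective the prices in Proposition \ref{th: acuc_price} differentiate.

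The paper closes this loop explicitly, in Steps 4--5 of the proof of Proposition \ref{th: dual} and the last two paragraphs of the ACUC proof:
(i) Every feasible point of the homogenized problem is feasible for $\ac^{\sdp}$, so $\opt(\text{Hom})\ge\opt(\ac^{\sdp})$.
(ii) Every feasible solution of the homogenized dual maps to a feasible dual solution of $\ac^{\sdp}$ with the same value. You keep the multipliers of shared constraints and absorb every constant created by homogenization into the multiplier of $\tilde{Y}_{11}=1$. Hence $\opt(\ac^{\sdp}\text{-dual})\ge\opt(\text{Hom-dual})$.

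Combined with Kim et al.\ and weak duality, this yields $\opt(\ac^{\sdp}\text{-dual})\ge\opt(\text{Hom-dual})=\opt(\text{Hom})\ge\opt(\ac^{\sdp})\ge\opt(\ac^{\sdp}\text{-dual})$. ``Strong duality then follows exactly as before'' is precisely the step that needs this argument.

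Alternatively, you can make your literal claim true without Kim et al. Write $\ac^{\sdp}$ in standard conic form, with slacks, the cone $\mathcal{K}=\mathcal{K}_1\cap\mathcal{K}_2$ and constraint map $\mathcal{A}$. A nonempty compact feasible set forces $\{\bm{D}\in\mathcal{K}:\mathcal{A}(\bm{D})=\bm{0}\}=\{\bm{0}\}$. Hence $\mathcal{K}^*+\mathrm{range}(\mathcal{A}^*)$ is the whole space, with $\mathcal{K}^*=\mathrm{cl}(\mathcal{K}_1^*+\mathcal{K}_2^*)$. So the dual has a point whose slack lies in $\mathrm{ri}(\mathcal{K}^*)$.

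Since $\mathrm{ri}\,\mathrm{cl}(\mathcal{K}_1^*+\mathcal{K}_2^*)=\mathrm{ri}(\mathcal{K}_1^*+\mathcal{K}_2^*)\subseteq\mathcal{K}_1^*+\mathcal{K}_2^*$, that point is feasible for the separate-multiplier dual. So is every point on a segment joining it to any dual-feasible point. Therefore that dual has the same value as the closed-cone dual, which equals the primal value by the relative-interior Slater theorem. Either route works, but one of them must be written out.

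A minor point: \eqref{eq: agg_demand} is not itself a constraint of $\ac^{\sdp}$. Bound $\ploss_t$ via \eqref{eq: loss} and the bounded flows, or via \eqref{eq: ac_sdp_6_2}.
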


    With strong duality, we can immediately extend the envelope theorem for $\mc{P}^\sdpbd$ to $\ac^\sdp$, as the proof of Theorem \ref{prop:envelopetheorem1} and Corollary \ref{cor:vector-envelope} are generally applicable for SDPs with strong duality and parameters appear only in the right-hand sides. 
    
    
    Consider a feasible ACUC SDP relaxation parameterized by demand $D_{kt}$. We obtain the partial derivative of its dual problem's objective function at an optimal solution with respect to $D^R_{kt}$ and $D^I_{kt}$, denoted as $r^\dr_{kt} := f^{ac-d}_{D^R_{kt}}(\Psi^*,D_{kt})$ and $r^\di_{kt} := f^{ac-d}_{D^I_{kt}}(\Psi^*,D_{kt})$ respectively, where $f^{ac-d}(\Psi,D_{kt})$ is the dual objective function and $\Psi^*$ is an optimal solution of the dual of $\ac^\sdp$. We use $r^d_{kt} := r^\dr_{kt}+\img r^\di_{kt}$ as the electricity price for ACUC at node $k$ and time $t$, where $r^\dr_{kt}$ and $r^\di_{kt}$ are respectively the price for real and reactive power. Proposition \ref{th: acuc_price} provides the expression for $r^d_{kt}$.

    \begin{proposition}[Expression for ACUC pricing]\label{th: acuc_price}
    Let $\lambda_{kt}=\lambda^R_{kt}+\img \lambda^I_{kt}$ be the complex dual variable associated with \eqref{eq: ac_sdp_2}, and let $\hat\lambda^R_t$ be the dual variable associated with \eqref{eq: ac_sdp_4}. The price
    \[
    r^{d\text{-}R}_{kt}=\lambda^{R*}_{kt}+2\hat\lambda^{R*}_t\sum_{k'\in N}D^R_{k't},
    \qquad
    r^{d\text{-}I}_{kt}=\lambda^{I*}_{kt}.
    \]
        \end{proposition}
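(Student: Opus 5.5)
The plan is to write down the dual objective $f^{ac\text{-}d}(\Psi,D_{kt})$ of $\ac^\sdp$ explicitly, isolate the terms in which the demand appears, and differentiate. The envelope theorem applies here because, by Proposition \ref{th: acuc_strong_dual}, strong duality holds and the demand enters only through right-hand sides. So, as for Proposition \ref{th: foc_d}, we can invoke Theorem \ref{prop:envelopetheorem1} and Corollary \ref{cor:vector-envelope} and define the prices as partial derivatives of $f^{ac\text{-}d}$ at a fixed optimal $\Psi^*$.

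\textbf{Step 1: dualize.} Form the Lagrangian of \eqref{eq: ac_sdp} with the following multipliers:
\begin{itemize}
\item $\lambda_{kt}$ for the complex flow balance \eqref{eq: ac_sdp_2}, written as two real equalities with multipliers $\lambda^R_{kt}$ and $\lambda^I_{kt}$;
\item $\hat\lambda^R_t$ for the RLT constraint \eqref{eq: ac_sdp_4};
\item $\gamma_{egt},\omega_{egt}$ for \eqref{eq: ac_sdp_3} and \eqref{eq: ac_sdp_5};
\item multipliers for the voltage bounds \eqref{eq: rectangular_5}, the thermal limits \eqref{eq: ac_flow_3} (a convex quadratic or second-order cone constraint), the bounds \eqref{eq: ac_sdp_6_2}, and the conic constraints on $\bm{W}_t$ and $\bm{Y}$.
\end{itemize}
The dual objective collects each right-hand side times its multiplier. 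The result is
\[
f^{ac\text{-}d}(\Psi,D)=\sum_{k,t}\bigl(\lambda^R_{kt}D^R_{kt}+\lambda^I_{kt}D^I_{kt}\bigr)+\sum_t\hat\lambda^R_t\Bigl(\sum_{k}D^R_{kt}\Bigr)^2+R(\Psi),
\]
where $R(\Psi)$ collects all remaining terms.

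\textbf{Step 2: show $R$ is demand-free.} I would verify that $R(\Psi)$ does not depend on $D$. This requires checking the following:
\begin{itemize}
\item $b_{egt}$ consists of generator parameters ($\pmin$, $\pmax$, $\qmin$, $\qmax$, ramping limits, and the binary bounds), none of which involve demand;
\item the constraints in \eqref{eq: rectangular} and \eqref{eq: loss} have zero or demand-free right-hand sides;
\item the thermal limits \eqref{eq: ac_flow_3} involve only $\smax$;
\item unlike the DCUC shift-factor model, no line-flow constraint carries demand in its right-hand side, which is why the extra line-flow terms of Proposition \ref{th: foc_d} are absent.
\end{itemize}

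\textbf{Step 3: differentiate.} Taking partial derivatives at $\Psi^*$ with respect to $D^R_{kt}$ gives $\lambda^{R*}_{kt}+2\hat\lambda^{R*}_t\sum_{k'}D^R_{k't}$. Taking them with respect to $D^I_{kt}$ gives $\lambda^{I*}_{kt}$, since no reactive RLT constraint was included.

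The main obstacle is the sign and scaling convention for the complex dual variable $\lambda_{kt}=\lambda^R_{kt}+\img\lambda^I_{kt}$. The real-valued dual pairing $\mathrm{Re}(\lambda_{kt}^c D_{kt})=\lambda^R_{kt}D^R_{kt}+\lambda^I_{kt}D^I_{kt}$ must be fixed so that splitting \eqref{eq: ac_sdp_2} into its real and imaginary parts reproduces exactly these coefficients. I would state this convention explicitly at the start of the argument. Once it is fixed, the remaining steps are routine bookkeeping.
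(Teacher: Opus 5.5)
Your proposal is correct and follows the paper's proof. The paper likewise observes that $D_{kt}$ enters only the right-hand sides of \eqref{eq: ac_sdp_2} and \eqref{eq: ac_sdp_4}, so the dual objective depends on demand only through $\lambda^{R*}_{kt}D^R_{kt}+\lambda^{I*}_{kt}D^I_{kt}+\hat\lambda^{R*}_t(\sum_{k'\in N}D^R_{k't})^2$ plus a demand-free constant, and then differentiates; your Step 2 check of the remaining right-hand sides and your explicit real pairing $\mathrm{Re}(\lambda_{kt}^c D_{kt})$ spell out what the paper takes for granted.
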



    In the SDP-based mechanism for ACUC, we use the prices obtained from Proposition \ref{th: acuc_price}, which are imposed on both real and reactive power. Computing the dispatch decisions is challenging, because $\ac$ is an MINLP and can become intractable quickly as the instance scales up. In our experiments, even small instances with 14 buses are difficult to solve. We therefore use high-quality feasible dispatch solutions in the mechanism, which can be obtained via local MINLP solvers such as KNITRO \citep{byrd2006k}. The SDP-based mechanism for ACUC is defined as follows:
    \begin{definition}[SDP-based mechanism for ACUC]
        Under the SDP-based mechanism for ACUC, the system operator follows a feasible dispatch decision of $\ac$. Prices are obtained via solving the SDP relaxation $\ac^\sdp$. At time $t$, the system operator 
        \BI
        \I pays $\pi^G_{gt} = r^{\dr}_{k(g),t} p^\feas_{gt} + r^{\di}_{k(g),t} q^\feas_{gt}$ to generator $g$ at each time $t\in\T$, where $k(g)$ denotes the node at which $g$ is located, and $p^\feas_{gt}$ and $q^\feas_{gt}$ are respectively feasible real and reactive production levels from the MINLP $\ac$. In addition, the system operator pays an LOC uplift payment $U_g(\vrd)$ to generator $g$; and
        \I collects $\pi^L_t = \sum_{k\in\mc{N}}\left(r^{\dr}_{kt} D^R_{kt} + r^{\di}_{kt} D^I_{kt}\right)$ from the load at each time $t\in\T$. In addition, the system operator collects a price adder of $\frac{\sum_{g\in\G}U_g(\vrd)}{\sum_{k\in\mc{N}, t\in\T}(D^R_{kt}+D^I_{kt})}$ per unit of demand.  
        \EI 
    \end{definition}
    As in our pricing scheme for DCUC, the pricing scheme here is cost-recovering by construction. 
    
    Finally, we also extend the {\cgreen finite nondifferentiability} result to the value function of $\ac^\sdp$, denoted as $V^{\rm AC-SDP}(\Ddotkt)$, where $\Ddotkt$ a single demand parameter (either $D^R_{kt}$ or $D^I_{kt}$). The proof follows the argument of Proposition \ref{th: continuous} for $\uc^\sdp$. The only difference is that the feasible region of $\ac^\sdp$ includes additional linear matrix inequality (LMI) constraints, such as \eqref{eq: rectangular_6}. Since each spectrahedron defined by an LMI is semialgebraic and semialgebraic sets are closed under finite intersections, adding \eqref{eq: rectangular_6} preserves semialgebraicity of the relevant sets. Hence, the same conclusion holds, and we omit the proof.

    \begin{proposition}[{\cgreen Finite nondifferentiability} of $V^{\rm AC-SDP}(\Ddotkt)$]\label{th: ac_continuous}
    For any interval $[\underline{D}^{\cdot}_{kt}, \overline{D}^\cdot_{kt}]$ such that $\ac^\sdp$ is feasible for all $\Ddotkt \in [\underline{D}^{\cdot}_{kt}, \overline{D}^\cdot_{kt}]$, the value function $V^{\mathrm{AC\text{-}SDP}}$ is differentiable everywhere on $[\underline{D}^{\cdot}_{kt}, \overline{D}^\cdot_{kt}]$ except possibly at finitely many points.
    \end{proposition}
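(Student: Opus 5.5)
The plan is to mirror the two-step argument of Proposition \ref{th: continuous}, with the extra conic blocks of $\ac^\sdp$ treated as additional semidefinite constraints.

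\emph{Preliminaries: finiteness and attainment.} Let $\bm{w}$ collect every primal variable of $\ac^\sdp$: the lifted entries $\mathrm{Vec}(\bm{Y})$, the entries of each $\bm{W}_t$, and the real and imaginary parts of $s_{gt}$ and $s_{ijt}$. Write the objective as $\vc^\top\bm{w}$, and write the feasible region at parameter value $\Ddotkt$ as $F(\Ddotkt)$.

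The first task is to show that $F(\Ddotkt)$ is compact for every $\Ddotkt$ in the interval. I would take the boundedness argument behind Proposition \ref{th: acuc_strong_dual}, which parallels that of Proposition \ref{th: dual}, and check it explicitly.
\begin{itemize}
\item The diagonal of $\bm{W}_t$ is bounded by \eqref{eq: rectangular_5}. Positive semidefiniteness then bounds every entry of $\bm{W}_t$, since $|W_{ij}|\le\sqrt{W_{ii}W_{jj}}$.
\item The flows $p_{ijt}$ and $q_{ijt}$ are linear in $\bm{W}_t$ through \eqref{eq: rectangular}, so they are bounded. They are also bounded directly by \eqref{eq: ac_flow_3}.
\item The block $\bm{Y}$ is bounded by the same mechanism as before. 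Binary diagonals satisfy $X_{ii}=x_i\le 1$, and continuous diagonals satisfy $X_{ii}\le U_i^2$. Together with $\bm{Y}\succeq 0$, these bound all off-diagonal entries of $\bm{Y}$.
\end{itemize}
The feasible region is also closed, so it is compact. Since it is nonempty by hypothesis, the value $V^{\mathrm{AC\text{-}SDP}}(\Ddotkt)$ is finite and attained throughout $[\underline{D}^{\cdot}_{kt}, \overline{D}^\cdot_{kt}]$.

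\emph{Step 1: semialgebraicity of the value function.} Set
\[
\mc{S}:=\{(\Ddotkt,\bm{w},z):\Ddotkt\in[\underline{D}^{\cdot}_{kt}, \overline{D}^\cdot_{kt}],\ \bm{w}\in F(\Ddotkt),\ z=\vc^\top\bm{w}\}.
\]
This set is semialgebraic jointly in $(\Ddotkt,\bm{w},z)$, for three reasons.
\begin{itemize}
\item The linear constraints are polynomial in $(\Ddotkt,\bm{w})$. The demand enters linearly in \eqref{eq: ac_sdp_2} and quadratically in the right-hand side of \eqref{eq: ac_sdp_4}.
\item The quadratic thermal limits \eqref{eq: ac_flow_3} are polynomial inequalities.
\item Each cone constraint $\bm{Y}\succeq 0$ and $\bm{W}_t\succeq 0$ can be written as finitely many polynomial inequalities, namely nonnegativity of all principal minors.
\end{itemize}
Define $\mc{A}$ as the projection of $\mc{S}$ onto the $(\Ddotkt,z)$ coordinates. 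Define $\mc{B}$ as the projection of the set of points in $\mc{S}$ for which some $\hat{\bm{w}}\in F(\Ddotkt)$ has $\vc^\top\hat{\bm{w}}<z$. By Tarski--Seidenberg, both $\mc{A}$ and $\mc{B}$ are semialgebraic. Because the optimum is attained, the graph of $V^{\mathrm{AC\text{-}SDP}}$ equals $\mc{A}\setminus\mc{B}$, which is therefore semialgebraic.

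\emph{Step 2: finite nondifferentiability.} Apply the monotonicity lemma \citep[Lemma 2.3]{lee2017generic} to this one-variable semialgebraic function. It gives a finite partition of the interval such that the function is $C^1$ on each open subinterval. Hence the function is differentiable except possibly at the finitely many breakpoints.

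\emph{Expected obstacle.} The main point needing care is the compactness and attainment step, since Step 2 depends on it.
\begin{itemize}
\item The complex variables $s_{gt}$ must be checked for boundedness. Their real parts are bounded by the $\pmax$ constraints and their imaginary parts by \eqref{eq: reactive}. The flow variables $s_{kjt}$ are bounded through \eqref{eq: rectangular} or \eqref{eq: ac_flow_3}.
\item $\ploss_t$ is a sum of bounded flows by \eqref{eq: loss}, so it is bounded. It should be treated as a continuous component of $\vx$ covered by \eqref{eq: ac_sdp_6_2}.
\end{itemize}
Once every coordinate of $\bm{w}$ is bounded, the rest of the argument goes through exactly as in Proposition \ref{th: continuous}.
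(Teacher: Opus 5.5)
Your proposal is correct and follows the paper's route. The paper says the proof of Proposition~\ref{th: continuous} carries over because the extra LMI blocks such as \eqref{eq: rectangular_6} are semialgebraic, which is exactly your Step~1 followed by the monotonicity lemma in Step~2; your explicit compactness check and your observation that the demand enters polynomially (so $\mc{S}$ is semialgebraic jointly in $(\Ddotkt,\bm{w},z)$, not merely fiberwise) add welcome care without changing the argument.
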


    \section{Bounding Lost Opportunity Cost}\label{ch: loc}
    In this section, we derive upper bounds on the total LOC in the SDP-based mechanism. The results are applicable to both DCUC and ACUC models. Consider a general UC problem parameterized by a demand vector $\vD\in\R^{|\mc{I}|}$, where $\mc{I}$ indexes demand at all buses in the set $\mc{N}$ and all time periods in the set $\T$. Denote each generator $g\in\G$'s decision vector as $\vx_g\in \X_g$, which includes binary commitment variables and continuous production variables, and denote the cost function as $C_g(\vx_g)$. The vector of production level variables $\vs_g$ is a subvector of $\vx_g$. Let $\vr\in\R^{\mc{I}}$ be a vector of net outflow and $\mc{R}$ be a set consisting of network feasibility constraints, which include either DC power flow constraints or AC power flow constraints. The UC problem is: 
    \begin{subequations}\label{eq: min_cost}
    \begin{alignat}{4}
        V(\vD) := &\min_{\substack{\vx_g\in \X_g, \forall g\in\G;\\ \vr\in\mc{R}}} ~~&&\sum_{g\in\G} C_g(\vx_g)\\
        &\st && \bm{A}^s \vs - \vr = \vD, \label{eq: min_cost_1}
    \end{alignat}
    \end{subequations}
    where the objective minimizes the total costs. Constraint \eqref{eq: min_cost_1} enforces flow balance, where $\bm{A}^s$ is the coefficient matrix associated with production decisions, and $\vs$ is the vector of all production level variables aggregated across generators, i.e., \(\vs = (\vs_g)_{g \in \G}\). Thus, \(\bm{A}^s \vs\) represents the total generation at each bus.
    
    We bound the total LOC with the optimality gap of a relaxation. Define the LOC of $g\in\G$ as a function of price vector $\vpi\in\R^{|\mc{I}|}$. Given $\vpi$, LOC is defined as the difference between a generator's maximum possible profit and its realized profit under the feasible decision vectors \(\vxfeas_g\) and \(\vsfeas_g\). Formally, the LOC $U_g(\vpi)$ is defined as follows:

    \begin{align}
        U_g(\vpi) := \max_{\vx_g\in\X_g}\left(\pi_{k(g)}^\top \vs_g - C_g(\vx_g)\right) - \left(\pi_{k(g)}^\top \vsfeas_g - C_g(\vxfeas_g)\right)
    \end{align}
    where \(\vpi_{k(g)}\) denotes the price vector at bus $k(g)$, the location of generator $g$. We assume that the maximum possible profit is attainable and thus $\max_{\vx_g\in\X_g}\left(\pi_{k(g)}^\top \vs_g - C_g(\vx_g)\right) = \sup_{\vx_g\in\X_g}\left(\pi_{k(g)}^\top \vs_g - C_g(\vx_g)\right)$. This assumption is without loss of generality, as generators have bounded capacities.
    
    First, we prove Lemma \ref{th: conjugate} to connect the maximum possible profit with the convex conjugate of the value function $V(\vD)$, which is needed for proving the theorem on the LOC bound. The convex conjugate of $V(\vD)$ is defined as $V^c(\vpi) := \sup_{\vD\in\R^{|\mc{I}|}} \{\vpi^\top \vD - V(\vD)\}$, where $V(\vD)=+\infty$ if \eqref{eq: min_cost} is infeasible at $\vD$. Note that under this convention, the supremum can equivalently be taken over $\rm{\bf dom} V :=\{\vD:V(\vD)<+\infty\}$, since \(\vpi^\top\vD-V(\vD)=-\infty\) whenever \(V(\vD)=+\infty\).
    
    \begin{lemma}\label{th: conjugate}
    The convex conjugate $V^c(\vpi) = \sum_{g\in\G} \max_{\vx_g\in\X_g}\left(\pi_{k(g)}^\top \vs_g - C_g(\vx_g)\right) + \sup_{\vr\in\mc{R}} (-\vpi^\top \vr)$ for $\vpi\in\R^{|\mc{I}|}$.
    \end{lemma}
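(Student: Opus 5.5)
The plan is to unfold the definition of the conjugate and exchange the order of the suprema, which is possible because $\vD$ is completely determined by the other decisions through \eqref{eq: min_cost_1}. Using the convention $V(\vD)=+\infty$ off $\mathbf{dom}\,V$, I would write
\[
V^c(\vpi)=\sup_{\vD\in\mathbf{dom}\,V}\Bigl\{\vpi^\top\vD-\min\Bigl\{\sum_{g\in\G}C_g(\vx_g):\ \vx_g\in\X_g,\ \vr\in\mc{R},\ \bm{A}^s\vs-\vr=\vD\Bigr\}\Bigr\}.
\]
Turning the negative minimum into a supremum gives a joint supremum over $\vD$ and over feasible $(\vx,\vr)$ of $\vpi^\top\vD-\sum_g C_g(\vx_g)$. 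The feasibility requirement is $\vD=\bm{A}^s\vs-\vr$.

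The key step is to eliminate $\vD$. Every choice of $\vx_g\in\X_g$ for all $g$ and $\vr\in\mc{R}$ defines the vector $\vD:=\bm{A}^s\vs-\vr$. That vector lies in $\mathbf{dom}\,V$, because the chosen tuple is feasible for \eqref{eq: min_cost} at it. Conversely, every feasible pair $(\vD,(\vx,\vr))$ arises in this way. So the double supremum equals
\[
\sup_{\vx_g\in\X_g,\ \vr\in\mc{R}}\Bigl\{\vpi^\top(\bm{A}^s\vs-\vr)-\sum_{g}C_g(\vx_g)\Bigr\}.
\]
I would check this equality by proving both inequalities directly, so that I never need the minimum defining $V(\vD)$ to be attained; only the definition of infimum is used.

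Next I would expand $\vpi^\top\bm{A}^s\vs$. Row $i\in\mc I$ of $\bm{A}^s$ sums the production of the generators at the bus and time indexed by $i$. Hence $\vpi^\top\bm{A}^s\vs=\sum_{g}\vpi_{k(g)}^\top\vs_g$, where $\vpi_{k(g)}$ is the subvector of prices at bus $k(g)$ across periods. The objective then separates into one term per generator, $\vpi_{k(g)}^\top\vs_g-C_g(\vx_g)$, depending only on $\vx_g$, plus the term $-\vpi^\top\vr$ depending only on $\vr$. The feasible set is the product $\prod_g\X_g\times\mc R$, so the supremum of this separable sum equals the sum of the individual suprema. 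This is valid in the extended reals provided no term equals $-\infty$, which holds because each $\X_g$ and $\mc{R}$ is nonempty given that \eqref{eq: min_cost} is feasible for some $\vD$.

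Finally, the paper's attainability assumption lets me replace each generator supremum by $\max_{\vx_g\in\X_g}$. The network term stays a supremum, which gives the stated formula. I expect the main obstacle to be careful bookkeeping rather than any deep step. Specifically, I must justify the exchange of suprema, including the case where $\sup_{\vr}(-\vpi^\top\vr)=+\infty$, in which both sides are $+\infty$. I must also match the indexing of $\bm{A}^s$ with $\vpi_{k(g)}$, which in the AC case involves both real and reactive components.
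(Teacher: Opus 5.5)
Your proposal is correct and takes essentially the same route as the paper's proof. You unfold the conjugate, merge the supremum over $\mathbf{D}$ with the inner optimization, eliminate $\mathbf{D}$ via $\mathbf{D}=\mathbf{A}^s\mathbf{s}-\mathbf{r}$, rewrite $\boldsymbol{\pi}^\top\mathbf{A}^s\mathbf{s}$ as generator revenues, and split the supremum over the product set $\prod_{g}\mathcal{X}_g\times\mathcal{R}$; your extra care (working with infima rather than an attained inner minimum, nonemptiness, and the $+\infty$ case) slightly tightens the paper's argument, which writes the inner problem as a max without comment.
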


    The proof relies on the fact that the feasible regions \(\X_g, \forall g\in\G\) and $\mc{R}$ are separable. While this is true for the ACUC model $\ac$, it does not generally hold for the DCUC model $\uc$ in its current form, as the flow limit constraints \eqref{eq: uc_model_sf_2} and \eqref{eq: uc_model_sf_3} depend on $p_{gt}$'s. On the other hand, the network-flow formulation of DCUC \citep{bienstock2024risk} matches the structure of \eqref{eq: min_cost}, with a feasible region that can be separated by $\X_g$'s and $\mc{R}$. Thus for DCUC, the proofs in this section rely on the network-flow formulation. Yet since the two DCUC formulations are equivalent, with identical optimal values and optimal dispatch decisions, the results on LOC bound in Theorem \ref{th: loc} and Corollary \ref{th: loc_eq} apply to DCUC regardless of the chosen formulation.

    Let $\zfeas(\vD)$ be the total costs corresponding to a feasible solution $(\vxfeas, \vrfeas)$ of \eqref{eq: min_cost} at $\vD$. Denote $\vrel(\vD)$ the value function of a relaxation of \eqref{eq: min_cost}, with $\vrel(\vD)\leq V(\vD), \vD\in\R^{|\mc{I}|}$. In Theorem \ref{th: loc}, we show that if the price $\vpi$ is a subgradient of $\vrel(\vD)$, then the LOC under $\vpi$ is bounded by the gap between $\zfeas(\vD)$ and $\vrel(\vD)$. {\cgreen The proof relies on Lemma \ref{th: conjugate}, as well as convex conjugate properties including order-reversing and the Fenchel's inequality.}

    \begin{theorem}\label{th: loc}
        Let $\vpi\in\partial \vrel(\vD)$. We have $\sum_{g\in\G}U_g(\vpi)\leq \zfeas(\vD) - \vrel(\vD)$.
    \end{theorem}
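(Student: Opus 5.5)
The plan is to chain four facts: Lemma \ref{th: conjugate}, the order-reversing property of conjugation, the Fenchel equality at a subgradient, and a direct evaluation of the realized profit at $(\vxfeas,\vrfeas)$.

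\textbf{Step 1: rewrite the total LOC.} Summing the definition of $U_g(\vpi)$ over $g\in\G$ gives
\[
\sum_{g\in\G}U_g(\vpi)=\sum_{g\in\G}\max_{\vx_g\in\X_g}\left(\pi_{k(g)}^\top \vs_g - C_g(\vx_g)\right)-\sum_{g\in\G}\left(\pi_{k(g)}^\top \vsfeas_g - C_g(\vxfeas_g)\right).
\]
By Lemma \ref{th: conjugate}, the first sum equals $V^c(\vpi)-\sup_{\vr\in\mc{R}}(-\vpi^\top\vr)$.

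Next I handle the realized-profit term. Feasibility of $(\vxfeas,\vrfeas)$ gives $\bm{A}^s\vsfeas-\vrfeas=\vD$. Because $\bm{A}^s$ maps production to its bus, $\sum_{g}\pi_{k(g)}^\top\vsfeas_g=\vpi^\top\bm{A}^s\vsfeas$. Hence
\[
\sum_g\left(\pi_{k(g)}^\top\vsfeas_g-C_g(\vxfeas_g)\right)=\vpi^\top\vD+\vpi^\top\vrfeas-\zfeas(\vD).
\]
Substituting, we obtain
\[
\sum_g U_g(\vpi)=V^c(\vpi)-\vpi^\top\vD+\zfeas(\vD)-\left[\sup_{\vr\in\mc{R}}(-\vpi^\top\vr)+\vpi^\top\vrfeas\right].
\]
The bracket is nonnegative because $\vrfeas\in\mc{R}$, so
\[
\sum_g U_g(\vpi)\le V^c(\vpi)-\vpi^\top\vD+\zfeas(\vD).
\]

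\textbf{Step 2: compare conjugates.} Since $\vrel\le V$ pointwise, conjugation reverses the order and gives $V^c(\vpi)\le (\vrel)^c(\vpi)$.

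\textbf{Step 3: use the subgradient.} Because $\vpi\in\partial\vrel(\vD)$, the subgradient inequality $\vrel(\vD')\ge\vrel(\vD)+\vpi^\top(\vD'-\vD)$ holds for all $\vD'$. This yields $\vpi^\top\vD'-\vrel(\vD')\le\vpi^\top\vD-\vrel(\vD)$, so
\[
(\vrel)^c(\vpi)=\vpi^\top\vD-\vrel(\vD).
\]
This is the Fenchel equality; only the direction $\le$ is needed, and it follows directly from the subgradient inequality without assuming convexity of $\vrel$.

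Combining the three steps gives $\sum_g U_g(\vpi)\le \vpi^\top\vD-\vrel(\vD)-\vpi^\top\vD+\zfeas(\vD)=\zfeas(\vD)-\vrel(\vD)$.

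\textbf{Main obstacle.} The delicate point is the bookkeeping in Step 1. I must check that $\sum_g\pi_{k(g)}^\top\vs_g=\vpi^\top\bm{A}^s\vs$, that is, that generator prices are indexed consistently with the demand vector. I also need the separability underlying Lemma \ref{th: conjugate}, which requires the network-flow formulation for DCUC, as noted before the lemma. Finally, the supremum over $\mc{R}$ must be finite so that no $\infty-\infty$ arises; if it is $+\infty$, then $V^c(\vpi)=+\infty$, which contradicts $V^c\le(\vrel)^c<\infty$ whenever the lemma's decomposition is valid.
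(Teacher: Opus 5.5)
Your proof is correct and follows essentially the same route as the paper's: Lemma~\ref{th: conjugate}, order reversal of conjugation from $\vrel\le V$, the Fenchel equality $(\vrel)^c(\vpi)=\vpi^\top\vD-\vrel(\vD)$ at the subgradient, and the identity $\sum_{g}\pi_{k(g)}^\top\vsfeas_g=\vpi^\top(\vD+\vrfeas)$, followed by discarding the nonnegative term $\sup_{\mathbf{r}\in\mc{R}}\{-\vpi^\top\mathbf{r}\}+\vpi^\top\vrfeas$. The only difference in route is the order of steps: you substitute the realized profit before comparing conjugates, whereas the paper does so afterwards. Your finiteness check on the supremum over $\mc{R}$ adds a small point of rigor that the paper leaves implicit.
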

    \proof{Proof of Theorem \ref{th: loc}}
    We use the following well-known properties of convex conjugate in this proof \citep[e.g.][Section 3.3]{borwein2006convex}:
    \BI
    \I {\it Order reversing: }Denote $f\leq g$ when $f(\vx)\leq g(\vx)$ holds pointwise for all $\vx$. If $f\leq g$ then $f^c\geq g^c$.
    \I {\it Fenchel's inequality: }$\vpi^\top \vx\leq f(\vx) + f^c(\vpi)$. The equality holds only when $\vpi\in\partial f(\vx)$.
    \EI

    We have:
    \begin{align*}
        \sum_{g\in\G}\max_{\vx_g\in\X_g}\left(\pi_{k(g)}^\top \vs_g - C_g(\vx_g)\right) = & V^c(\vpi) - \sup_{\vr\in\mc{R}} \{-\vpi^\top \vr\}\\
        \leq & (\vrel)^c(\vpi) - \sup_{\vr\in\mc{R}} \{-\vpi^\top \vr\}\\
        = & \vpi^\top \vD - \vrel(\vD) - \sup_{\vr\in\mc{R}} \{-\vpi^\top \vr\},
    \end{align*}
    where the first equality follows from Lemma \ref{th: conjugate}. The inequality holds because $\vrel \leq V$ and the convex conjugate is order-reversing. The final equality follows from Fenchel's inequality, using the fact that $\vpi \in \partial \vrel(\vD)$. 

    Therefore, the LOC
    \begin{subequations}\label{eq: loc_proof}
    \begin{align}
        \sum_{g\in\G} U_g(\vpi) = &\sum_{g\in\G} \max_{\vx_g\in\X_g}\left(\pi_{k(g)}^\top \vs_g - C_g(\vx_g)\right) - \sum_{g\in\G} \left(\pi_{k(g)}^\top \vsfeas_g - C_g(\vxfeas_g)\right)\\
        \leq & \left(\vpi^\top \vD - \vrel(\vD) - \sup_{\vr\in\mc{R}} \{-\vpi^\top \vr\}\right) - \left(\sum_{g\in\G} \pi_{k(g)}^\top \vsfeas_g - \sum_{g\in\G} C_g(\vxfeas_g)\right)\\
        = & \left(\vpi^\top \vD - \vrel(\vD) - \sup_{\vr\in\mc{R}} \{-\vpi^\top \vr\}\right) - \left(\sum_{g\in\G} \pi_{k(g)}^\top \vsfeas_g - \zfeas(\vD)\right)\\
        = & \left(\vpi^\top \vD - \vrel(\vD) - \sup_{\vr\in\mc{R}} \{-\vpi^\top \vr\}\right) - \left(\vpi^\top (D+\vrfeas) - \zfeas(\vD)\right)\\
        = & \zfeas(\vD) - \vrel(\vD) - \left(\sup_{\vr\in\mc{R}}\{-\vpi^\top \vr\} - (-\vpi^\top \vrfeas)\right)\\
        \leq & \zfeas(\vD) - \vrel(\vD),
    \end{align}
    \end{subequations}
    where the third equality follows from $\sum_{g\in\G} \pi_{k(g)}^\top \vsfeas_g = \vpi^\top \bm{A}^s \vsfeas = \vpi^\top(D + \vrfeas)$.
    \qed
    \endproof

    Note that while our work focuses on SDP-based pricing, Theorem \ref{th: loc} is more general and applies to any relaxation of UC that satisfies $\vrel(\vD)\leq V(\vD)$ at any $\vD\in\R^{|\mc{I}|}$.

    For our SDP-based price vector, a sufficient condition for it to be a subgradient of the SDP relaxation's value function is for $\vD$ to enter linearly in the right-hand sides of the SDP. This is because for a parameterized SDP where the parameter appears only linearly in the right-hand sides, its value function is convex \citep[Section 5.6.1]{boyd2004convex} and thus its gradients are the same as subgradients. Note that the right-hand side parameters of $\uc^\sdp$ or $\ac^\sdp$ are not generally linear in $\vD$. Yet in numerical experiments (Section \ref{ch: loc_result}), our proposed SDP-based pricing leads to lower LOCs compared with the fixed-binary pricing, which are widely-used in practice.
    
    One method to ensure linearity of SDP relaxations' right hand sides is to replace RLT constraints that introduce quadratic terms in $\vD$. For example, in $\ac^\sdp$, we can replace \eqref{eq: ac_sdp_2} with valid inequalities where $\vD$ enters linearly in the right-hand sides, such as $\tr(\vh^{\rm R}_t \vh^{\rm R \top}_t \bm{X}) \leq M\sum_{k\in\mc{N}}D^{\rm R}_{kt}$, with $M> \sum_{k\in\mc{N}}D^{\rm R}_{kt}$ being a large constant. Deriving such valid inequalities is a topic of future work. 

    For the bound in Theorem \ref{th: loc} to be tight, a simple case is when $\vrel(\vD)$ is exact and thus $\vrel(\vD) = V(\vD)$. In this case, the subgradient from $\vrel(\vD)$ yields a price vector that entails no loss of opportunity cost. 

    A more general case for the bound to be tight with a nonzero lost opportunity cost arises when there is no congestion in the network and when the convex conjugates of $V(\vpi)$ and $\vrel(\vpi)$ coincide, {\cgreen as these conditions eliminate the two inequalities in the proof of Theorem \ref{th: loc_eq}:} 

    \begin{corollary}\label{th: loc_eq}
        Let $\vpi\in\partial \vrel(\vD)$. If (1) there is no congestion in the network and (2) $V^c(\vpi)=(\vrel)^c(\vpi)$, then $\sum_{g\in\G}U_g(\vpi) = \zfeas(\vD) - \vrel(\vD)$.
    \end{corollary}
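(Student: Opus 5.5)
We retrace the chain of (in)equalities in the proof of Theorem \ref{th: loc} and show that, under hypotheses (1) and (2), each of its two inequalities holds with equality. No new estimate is required; we only verify that the two slack terms vanish.

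\emph{First inequality.} It arises from the order-reversing step $V^c(\vpi)\le(\vrel)^c(\vpi)$. Hypothesis (2) turns this into an equality directly. Lemma \ref{th: conjugate} then gives
\[
\sum_{g\in\G}\max_{\vx_g\in\X_g}\left(\pi_{k(g)}^\top\vs_g-C_g(\vx_g)\right)=(\vrel)^c(\vpi)-\sup_{\vr\in\mc{R}}\{-\vpi^\top\vr\}.
\]
Because $\vpi\in\partial\vrel(\vD)$, Fenchel's inequality is tight, so $(\vrel)^c(\vpi)=\vpi^\top\vD-\vrel(\vD)$, exactly as in the theorem's proof.

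\emph{Second inequality.} It drops the nonnegative term $\sup_{\vr\in\mc{R}}\{-\vpi^\top\vr\}-(-\vpi^\top\vrfeas)$. We must show this term is zero, i.e., that $\vrfeas$ maximizes $-\vpi^\top\vr$ over $\mc{R}$. This is where hypothesis (1) enters, and it is the main obstacle, because ``no congestion'' must be translated into a precise statement about $\mc{R}$ and $\vpi$.

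Our interpretation is that, with no binding network limits, the network constraints impose no locational price differences. Prices are then uniform across buses within each period, $\pi_{kt}=\pi_t$. Moreover, any $\vr\in\mc{R}$ satisfies $\sum_k r_{kt}=0$ for each $t$ in the lossless DC network-flow formulation, since net outflows sum to zero. Hence $\vpi^\top\vr=\sum_t\pi_t\sum_k r_{kt}=0$ for every $\vr\in\mc{R}$, including $\vrfeas$. The supremum therefore equals $-\vpi^\top\vrfeas=0$, and the slack vanishes.

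The alternative route, if uniform prices are not assumed, is to argue that without congestion the optimal network flows are unconstrained optimizers given $\vpi$. In that case $\vrfeas$ attains $\sup_{\vr\in\mc{R}}\{-\vpi^\top\vr\}$ by the first-order (complementary slackness) conditions on flows. If the paper's notion of congestion is exactly this optimality of $\vrfeas$, the step becomes immediate.

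With both slacks zero, the chain \eqref{eq: loc_proof} collapses to equalities, giving $\sum_{g\in\G}U_g(\vpi)=\zfeas(\vD)-\vrel(\vD)$. I would state explicitly the formalization of (1) being used, namely $\vrfeas\in\arg\max_{\vr\in\mc{R}}\{-\vpi^\top\vr\}$, which is implied by uniform prices in the lossless case.
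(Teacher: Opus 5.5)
Your proposal follows the paper's proof. Both retrace the chain in Theorem \ref{th: loc}, get equality in the order-reversing step directly from hypothesis (2), and eliminate the slack $\sup_{\mathbf{r}\in\mc{R}}\{-\vpi^\top\mathbf{r}\}+\vpi^\top\vrfeas$ by arguing that the network terms drop out when there is no congestion. The paper phrases this as aggregating the model across buses, which is exactly your uniform-price, zero-total-net-outflow argument. Your explicit formalization $\vrfeas\in\arg\max_{\mathbf{r}\in\mc{R}}\{-\vpi^\top\mathbf{r}\}$ and your restriction to the lossless case are, if anything, more careful than the paper's informal treatment. Uniform prices also need not be assumed: if $\vrel$ depends only on per-period totals, any $\vpi\in\partial\vrel(\vD)$ satisfies $\vpi^\top\mathbf{h}=0$ for every $\mathbf{h}$ with $\sum_k h_{kt}=0$ for all $t$.
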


    An important example in which the conditions of Corollary \ref{th: loc_eq} hold is convex-hull pricing for DCUC without network constraints, since $\zfeas(\vD) = V(\vD)$ corresponds to an optimal solution. In this setting, \citet{gribik2007market} show that the convex hull prices, which are derived as Lagrange multipliers $\vpi^{\rm L}$ from a Lagrangian relaxation, ensure $V^{\rm L}(\vD) = V^{cc}(\vD)$, where $V^{\rm L}(\vD)$ and $V^{cc}(\vD)$ are respectively the Lagrangian relaxation and biconjugate of $V(\vD)$. As a result, $V^c(\vpi^{\rm L}) = (V^{cc})^c(\vpi^{\rm L}) = (V^{L})^c(\vpi^{\rm L})$, which coincides with the second condition in Corollary \ref{th: loc_eq}. Thus, under convex-hull pricing and when there is no congestion, the bound of Theorem \ref{th: loc} is tight. 
    \section{Numerical Experiments}\label{sec:pricingscheme}
     In this section, we present multiple case studies for DCUC and ACUC. Our experiments are implemented in Julia 1.12.4 using the optimization modeling package JuMP 1.25.0 \citep{legat2022mathoptinterface}. Optimization problems are solved on the Palmetto Cluster \citep{antao2024modernizing} with 10 Intel Xeon CPUs and 350GB of memory. We use Gurobi 12.0.1 to solve MILP and linear programming (LP) models, while MOSEK 11.0.0 \citep{aps2020mosek} is used for SDP models. 
     
     We use the data from the UnitCommitment.jl package \citep{alinson_s_xavier_2022_6857290} for UC coefficients. Specifically, we use the MATPOWER/UW-PSTCA instances \citep{zimmerman2010matpower, IEEE} corresponding to January 1st from the package. In addition, the data from the PowerModels.jl package \citep{8442948} are used for AC power flow coefficients in the ACUC model. To generate multiple scenarios for each instance, we multiply loads at all buses by a load multiplier chosen from a set of 13 discrete values $\{0.1,...,1.3\}$.

    We impose several algorithmic enhancements \citep{fattahi2017conic} to improve the tightness or tractability of the SDP relaxation for both DCUC and ACUC. To improve tightness, we impose triangle inequalities on some binary variables (see Section \ref{ch: ineq} of the Appendix for detail). To improve tractability, we relax the PSD conic constraint $\bm{Y}\in\mc{S}^+_{n+1}$ with a block-diagonal relaxation, decomposing it by time period. Specifically, for each time period $t\in\mc{T}$ we impose a PSD conic constraint on a submatrix of $\bm{Y}$, consisting of all entries of $\vx$ and $\bm{X}$ corresponding to $t$. This relaxation enables us to solve larger instances while maintaining a tight formulation in our experiments.


\subsection{DCUC Case Study} \label{ch: dcuc_results}
In this section, we assess the tightness of $\uc^\sdpbd$ (with algorithmic enhancements) by presenting its optimality gap relative to the optimal value of $\uc$. We also compare this gap with that of the linear relaxation $\mc{UC}^{\text{LP}}$, obtained by relaxing all binary variables to continuous variables in $[0,1]$. The optimality gap is calculated as follows: $$\text{Optimality gap of }\mc{UC}^{\rel}= \frac{\text{opt}(\mc{UC})-\text{opt}(\mc{UC}^{\rel})}{\text{opt}(\mc{UC})},$$
where $\text{opt}(\cdot)$ denotes the optimal value of a problem. $\mc{UC}^{\rel}$ denotes either $\uc^\sdpbd$ or $\uc^{\text{LP}}$.

Table \ref{table: dcuc_mult_time} summarizes the optimality gaps of $\mc{UC}^{\text{LP}}$ and $\mc{UC}^\sdpbd$ for the IEEE 14-, 30-, and 57-bus instances for each load multiplier. The 14-bus and 30-bus instances were solved over 24 time periods, whereas the 57-bus instance was solved over 6 time periods, representing the largest instance size we solve to optimality. 

With the default tolerance setting of MOSEK, some instances have negative optimality gaps of $-0.1\%$. These are possibly due to numerical issues, as when we use a tighter interior-point optimizer tolerance for primal feasibility and relative gap termination (tightened from the default $e^{-8}$ to $e^{-10}$), such negative gaps disappear. We thus report results for these instances under the tighter tolerance, as noted by $*$ in the table.


\begin{table}[htbp!]
\centering
         \caption{Optimality gap (\%) comparisons for DCUC (*: tighter solver tolerance used.)}\label{table: dcuc_mult_time}
{\footnotesize
\begin{tabular}{rrrrrrr}
\toprule
\multicolumn{1}{c}{Load} & \multicolumn{2}{c}{ 14-bus} &
 \multicolumn{2}{c}{ 30-bus}  & \multicolumn{2}{c}{ 57-bus}\\

\multicolumn{1}{c}{multiplier} & \multicolumn{1}{c}{$\mc{UC}^{\text{LP}}$}           & \multicolumn{1}{c}{$\mc{UC}^\sdpbd$} & \multicolumn{1}{c}{$\mc{UC}^{\text{LP}}$} & \multicolumn{1}{r}{$\mc{UC}^\sdpbd$}  & \multicolumn{1}{c}{$\mc{UC}^{\text{LP}}$} & \multicolumn{1}{r}{$\mc{UC}^\sdpbd$} \\ 
\cmidrule(l){1-1} \cmidrule(l){2-3} \cmidrule(l){4-5} \cmidrule(l){6-7}   \multicolumn{1}{r}{0.1} &  \multicolumn{1}{r}{33.6} &  \multicolumn{1}{r}{0.0*} &  \multicolumn{1}{r}{28.8} &  \multicolumn{1}{r}{0.0*}&  \multicolumn{1}{r}{10.9} &  \multicolumn{1}{r}{0.0*}\\ 
\multicolumn{1}{r}{0.2} &  \multicolumn{1}{r}{13.2} &  \multicolumn{1}{r}{0.0*}  &  \multicolumn{1}{r}{9.2} &  \multicolumn{1}{r}{0.0*}&  \multicolumn{1}{r}{3.1} &  \multicolumn{1}{r}{1.5}\\ 

\multicolumn{1}{r}{0.3} &  \multicolumn{1}{r}{3.3} &  \multicolumn{1}{r}{0.0*} &  \multicolumn{1}{r}{10.3} &  \multicolumn{1}{r}{8.4}&  \multicolumn{1}{r}{2.8} &  \multicolumn{1}{r}{1.5}\\ 

\multicolumn{1}{r}{0.4} &  \multicolumn{1}{r}{7.4} &  \multicolumn{1}{r}{5.5} &  \multicolumn{1}{r}{9.9} &  \multicolumn{1}{r}{8.4}&  \multicolumn{1}{r}{2.1} &  \multicolumn{1}{r}{1.8}\\ 

\multicolumn{1}{r}{0.5} &  \multicolumn{1}{r}{8.6} &  \multicolumn{1}{r}{7.3} &  \multicolumn{1}{r}{4.6} &  \multicolumn{1}{r}{3.7}&  \multicolumn{1}{r}{2.3} &  \multicolumn{1}{r}{1.4}\\ 

\multicolumn{1}{r}{0.6} &  \multicolumn{1}{r}{3.9} &  \multicolumn{1}{r}{3.1} &  \multicolumn{1}{r}{3.4} &  \multicolumn{1}{r}{2.8}&  \multicolumn{1}{r}{4.7} &  \multicolumn{1}{r}{0.8}\\ 

\multicolumn{1}{r}{0.7} &  \multicolumn{1}{r}{3.4} &  \multicolumn{1}{r}{2.7} &  \multicolumn{1}{r}{4.5} &  \multicolumn{1}{r}{4.0}&  \multicolumn{1}{r}{1.0} &  \multicolumn{1}{r}{0.0*}\\ 

\multicolumn{1}{r}{0.8} &  \multicolumn{1}{r}{4.9} &  \multicolumn{1}{r}{4.2} &  \multicolumn{1}{r}{3.1} &  \multicolumn{1}{r}{2.6}&  \multicolumn{1}{r}{2.5} &  \multicolumn{1}{r}{0.0*}\\ 

\multicolumn{1}{r}{0.9} &  \multicolumn{1}{r}{3.4} &  \multicolumn{1}{r}{2.9} &  \multicolumn{1}{r}{2.2} &  \multicolumn{1}{r}{1.8}&  \multicolumn{1}{r}{1.5} &  \multicolumn{1}{r}{0.0*}\\ 

\multicolumn{1}{r}{1.0} &  \multicolumn{1}{r}{3.1} &  \multicolumn{1}{r}{2.7} &  \multicolumn{1}{r}{3.1} &  \multicolumn{1}{r}{2.7}&  \multicolumn{1}{r}{1.0} &  \multicolumn{1}{r}{0.0*}\\ 

\multicolumn{1}{r}{1.1} &  \multicolumn{1}{r}{1.3} &  \multicolumn{1}{r}{0.9} &  \multicolumn{1}{r}{2.7} &  \multicolumn{1}{r}{2.3}&  \multicolumn{1}{r}{0.6} &  \multicolumn{1}{r}{0.0*}\\ 

\multicolumn{1}{r}{1.2} &  \multicolumn{1}{r}{2.7} &  \multicolumn{1}{r}{2.2} &  \multicolumn{1}{r}{1.8} &  \multicolumn{1}{r}{1.1}&  \multicolumn{1}{r}{0.4} &  \multicolumn{1}{r}{0.0*}\\ 

\multicolumn{1}{r}{1.3} &  \multicolumn{1}{r}{3.0} &  \multicolumn{1}{r}{2.4} &  \multicolumn{1}{r}{3.0} &  \multicolumn{1}{r}{1.8}&  \multicolumn{1}{r}{0.2} &  \multicolumn{1}{r}{0.0*}\\ \bottomrule
\end{tabular}}
\end{table}

For all instances, the SDP relaxation consistently yields smaller optimality gaps than the LP relaxation. The average optimality gap across all instances for the LP and SDP relaxations are 5.4\% and 2.1\%, respectively. Moreover, the optimality gaps show a decreasing trend as the problem size increases. For example, when the load multiplier equals 0.8, the optimality gaps of $\uc^\sdpbd$ for the 14-, 30-, and 57-bus instances are 4.2\%, 2.6\%, and 0.0\%, respectively. 

\subsection{ACUC Case Study}
In this section, we assess the tightness of $\ac^\sdp$ (with algorithmic enhancements). It is difficult to obtain optimality gaps for ACUC instances, as the exact ACUC MINLP model $\ac$ is difficult to solve. Therefore, we instead report the objective values of solved instances. We compare with a relaxed binary relaxation $\ac^{\text{RB}}$, obtained by (i) relaxing all binary variables in $\ac$ to continuous variables in $[0,1]$ and (ii) replacing AC-related nonconvex nonlinear constraints \eqref{eq: ac_flow_2_1} - \eqref{eq: ac_flow_2_3} with the SDP-based rectangular formulation \eqref{eq: rectangular}. Therefore, $\ac^\sdp$ and $\ac^{\text{RB}}$ differ only in how the UC-related constraints are relaxed, with $\ac^\sdp$ employing the lifting \eqref{eq: sdp_2}-\eqref{eq: sdp_6}, which is strictly stronger than a direct relaxation of the binary variables.

Table \ref{table: acuc_obj_val} summarizes the objective values of $\ac^\sdp$ and $\mc{AC}^{\text{RB}}$ for the IEEE 14-, 30-, and 57-bus instances for each load multiplier. Each instance was solved over 24 time periods. Instances that terminated with a primal status ``UNKNOWN\_RESULT\_STATUS" are indicated by ``*" in the table, while those terminated with a primal status ``NO\_SOLUTION" are indicated with ``-".

\begin{table}[htbp!]
\centering
    \begin{footnotesize}
         \caption{Objective value (\$) comparisons for ACUC (*: Instances with ``UNKNOWN\_RESULT\_STATUS", \\ -: Instances with ``NO\_SOLUTION")}\label{table: acuc_obj_val}
    
\begin{tabular}{rrrrrrr}

\toprule
\multicolumn{1}{c}{Load} & \multicolumn{2}{c}{ 14-bus} &
 \multicolumn{2}{c}{ 30-bus}  & \multicolumn{2}{c}{ 57-bus}\\

\multicolumn{1}{c}{multiplier} & \multicolumn{1}{c}{$\ac^\sdp$}           & \multicolumn{1}{c}{$\mc{AC}^{\text{RB}}$}  & \multicolumn{1}{c}{$\ac^\sdp$}           & \multicolumn{1}{c}{$\mc{AC}^{\text{RB}}$}  & \multicolumn{1}{c}{$\ac^\sdp$} & \multicolumn{1}{c}{$\mc{AC}^{\text{RB}}$}   \\ 
\cmidrule(l){1-1} \cmidrule(l){2-3} \cmidrule(l){4-5} \cmidrule(l){6-7} 

\multicolumn{1}{c}{0.1} &  \multicolumn{1}{r}{ 246739.6} &  \multicolumn{1}{r}{21129.2} &  \multicolumn{1}{r}{267818.3} &  \multicolumn{1}{r}{24633.0*}&  \multicolumn{1}{r}{ 210252.7} &  \multicolumn{1}{r}{55828.3*}\\

\multicolumn{1}{c}{0.2} &  \multicolumn{1}{r}{259109.3} &  \multicolumn{1}{r}{42291.3} &  \multicolumn{1}{r}{287728.0} &  \multicolumn{1}{r}{49313.5*}&  \multicolumn{1}{r}{256692.4} &  \multicolumn{1}{r}{112766.1*}\\ 

\multicolumn{1}{c}{0.3} & \multicolumn{1}{r}{276211.1} &  \multicolumn{1}{r}{63822.1*} &  \multicolumn{1}{r}{ 307629.2} &  \multicolumn{1}{r}{74360.0*}&  \multicolumn{1}{r}{ 306047.4} &  \multicolumn{1}{r}{170734.6*}\\

\multicolumn{1}{c}{0.4} &  \multicolumn{1}{r}{293372.9} &  \multicolumn{1}{r}{85529.5*} &  \multicolumn{1}{r}{327875.2} &  \multicolumn{1}{r}{100259.0*}&  \multicolumn{1}{r}{ 458938.5} &  \multicolumn{1}{r}{229032.7*}\\ 

\multicolumn{1}{c}{0.5} &  \multicolumn{1}{r}{310988.6} &  \multicolumn{1}{r}{107317.4*} &  \multicolumn{1}{r}{348964.1} &  \multicolumn{1}{r}{126284.9*}&  \multicolumn{1}{r}{685221.8} &  \multicolumn{1}{r}{289436.6}\\ 

\multicolumn{1}{c}{0.6} &  \multicolumn{1}{r}{ 329386.2} &  \multicolumn{1}{r}{129058.9*} &  \multicolumn{1}{r}{371336.5} &  \multicolumn{1}{r}{152268.1*}&  \multicolumn{1}{r}{917233.5} &  \multicolumn{1}{r}{361122.5}\\ 

\multicolumn{1}{c}{0.7} &  \multicolumn{1}{r}{348417.5} &  \multicolumn{1}{r}{150893.9*} &  \multicolumn{1}{r}{395784.9} &  \multicolumn{1}{r}{178408.0*}&  \multicolumn{1}{r}{1155242.1} &  \multicolumn{1}{r}{496195.8}\\ 

\multicolumn{1}{c}{0.8} &  \multicolumn{1}{r}{ 369245.2*} &  \multicolumn{1}{r}{172948.4*} &  \multicolumn{1}{r}{463715.1*} &  \multicolumn{1}{r}{204506.6*}&  \multicolumn{1}{r}{-} &  \multicolumn{1}{r}{ 708813.3*}\\ 

\multicolumn{1}{c}{0.9} &  \multicolumn{1}{r}{416871.9*} &  \multicolumn{1}{r}{195332.1*} &  \multicolumn{1}{r}{758598.3*} &  \multicolumn{1}{r}{233887.2*}&  \multicolumn{1}{r}{-} &  \multicolumn{1}{r}{940014.9}\\ 

\multicolumn{1}{c}{1.0} &  \multicolumn{1}{r}{806954.2*} &  \multicolumn{1}{r}{217885.2*} &  \multicolumn{1}{r}{1406965.0*} &  \multicolumn{1}{r}{261744.7*}&  \multicolumn{1}{r}{-} &  \multicolumn{1}{r}{1174381.5}\\ 

\multicolumn{1}{c}{1.1} &  \multicolumn{1}{r}{843345.6*} &  \multicolumn{1}{r}{241215.1*} &  \multicolumn{1}{r}{-} &  \multicolumn{1}{r}{291853.1*}&  \multicolumn{1}{r}{-} &  \multicolumn{1}{r}{1423090.0}\\ 

\multicolumn{1}{c}{1.2} &  \multicolumn{1}{r}{-} &  \multicolumn{1}{r}{264320.8*} &  \multicolumn{1}{r}{-} &  \multicolumn{1}{r}{325184.9*}&  \multicolumn{1}{r}{-} &  \multicolumn{1}{r}{1641650.0}\\ 

\multicolumn{1}{c}{1.3} &  \multicolumn{1}{r}{-} &  \multicolumn{1}{r}{289141.2*} &  \multicolumn{1}{r}{-} &  \multicolumn{1}{r}{363458.7*}&  \multicolumn{1}{r}{-} &  \multicolumn{1}{r}{-}\\ \bottomrule

\end{tabular}
\end{footnotesize}
\end{table}

For all instances that terminated with a solution, our SDP relaxation obtains tighter relaxation bounds than $\ac^{\rm RB}$, often by a large margin. This highlights the importance of lifting the UC-related constraints in strengthening the convex relaxation of ACUC. In addition, our SDP relaxation produces more reliable solutions, as it reports substantially fewer instances with an unknown result status. One possible explanation is that the tighter relaxation leads to better numerical behavior.

As the load multiplier increases, instances become increasingly difficult to solve, likely due to higher network congestion. A promising future direction is to improve the scalability of AC power flow constraints, by breaking large PSD constraints into smaller blocks via chordal decomposition. This approach is helpful to reduce solver memory usage and has been shown to improve the tractability of ACOPF problems.

Finally, we briefly comment on the solvability of the MINLP $\ac$. We experimented with solving a small $\ac$ instance with 14 buses and a single time period, omitting ramping and minimum up/down time constraints \eqref{eq: uc_model_sf_8}-\eqref{eq: uc_model_sf_11}. We used Gurobi 12.0.3, which provides a global solver for MINLPs. At a load multiplier 1.0, the solver reached a 0.4\% gap in 433.58 seconds. Compared with the upper bound provided by this run, our SDP relaxation counterpart (without algorithmic enhancements) has a 2.4\% optimality gap, indicating that the SDP relaxation is relatively tight. The $\ac$ instance becomes significantly harder at a load multiplier of 0.1, with the solver returning a 12.0\% gap after 3500 seconds. The worse performance at a low load level appears to be instance-dependent, and could be induced by e.g., tight lower-bound constraints that complicate the MINLP solution process.

\subsection{LOC for DCUC}\label{ch: loc_result}
In this section, we compare the total LOC of our SDP-based pricing scheme with the fixed-binary pricing scheme that is currently used in practice. In the fixed-binary pricing for DCUC, denoted as $\mc{UC}^{\text{FB}}$, the binary variables in $\uc$ are fixed to their optimal levels, with prices obtained from the dual solution of the resulting linear program. 

We report LOC comparisons only for DCUC, as extending fixed-binary pricing to ACUC is challenging: ACUC is an MINLP that is hard to solve, and fixing binary variables does not remove nonconvexities arising from AC power flow constraints, complicating price construction.

Table \ref{table: loc_sdp_fixbin} summarizes the total LOC for the same DCUC instances solved in Section \ref{ch: dcuc_results}, under both the fixed-binary pricing scheme ($\uc^{\text{FB}}$) and our proposed SDP-based pricing scheme ($\uc^\sdpbd$). The instances where SDP-based pricing produces a lower total LOC are highlighted with boldface. For most instances, the total LOC under our SDP-based pricing scheme is lower, with an average reduction of 46\%.

\begin{table}[htbp!]
\centering
    \begin{footnotesize}
         \caption{Total LOC (\$) comparisons for DCUC ({\bf boldface}: lower total LOC with SDP-based pricing)}\label{table: loc_sdp_fixbin}
{\cgreen 
\begin{tabular}{rrrrrrr}

\toprule
\multicolumn{1}{c}{Load} & \multicolumn{2}{c}{ 14-bus} &
 \multicolumn{2}{c}{ 30-bus}  & \multicolumn{2}{c}{ 57-bus}\\

\multicolumn{1}{c}{multiplier} & \multicolumn{1}{c}{$\mc{UC}^{\text{FB}}$}           & \multicolumn{1}{c}{$\mc{UC}^\sdpbd$}  & \multicolumn{1}{c}{$\mc{UC}^{\text{FB}}$}           & \multicolumn{1}{c}{$\mc{UC}^\sdpbd$}  & \multicolumn{1}{c}{$\mc{UC}^{\text{FB}}$}           & \multicolumn{1}{c}{$\mc{UC}^\sdpbd$}   \\ 

\cmidrule(l){1-1} \cmidrule(l){2-3} \cmidrule(l){4-5} \cmidrule(l){6-7} 

\multicolumn{1}{c}{0.1} &  \multicolumn{1}{r}{14919.6} &  \multicolumn{1}{r}{15009.1} &  \multicolumn{1}{r}{14920.0} &  \multicolumn{1}{r}{\bf14642.0}&  \multicolumn{1}{r}{3872.7} &  \multicolumn{1}{r}{\bf3782.5}\\ 

\multicolumn{1}{c}{0.2} &  \multicolumn{1}{r}{14922.3} &  \multicolumn{1}{r}{\bf14905.7} &  \multicolumn{1}{r}{14923.2} &  \multicolumn{1}{r}{\bf14664.9}&  \multicolumn{1}{r}{4627.5} &  \multicolumn{1}{r}{\bf1277.9}\\ 

\multicolumn{1}{c}{0.3} &  \multicolumn{1}{r}{14925.0} &  \multicolumn{1}{r}{14939.4} &  \multicolumn{1}{r}{18710.6} &  \multicolumn{1}{r}{48437.1}&  \multicolumn{1}{r}{7873.9} &  \multicolumn{1}{r}{\bf1339.1}\\ 

\multicolumn{1}{c}{0.4} &  \multicolumn{1}{r}{18711.0} &  \multicolumn{1}{r}{\bf6935.6} &  \multicolumn{1}{r}{25243.2} &  \multicolumn{1}{r}{\bf11207.3}&  \multicolumn{1}{r}{8847.8} &  \multicolumn{1}{r}{\bf2298.9}\\

\multicolumn{1}{c}{0.5} &  \multicolumn{1}{r}{30886.9} &  \multicolumn{1}{r}{\bf10571.0} &  \multicolumn{1}{r}{30887.0} &  \multicolumn{1}{r}{\bf6837.3}&  \multicolumn{1}{r}{11400.1} &  \multicolumn{1}{r}{\bf1481.0}\\ 

\multicolumn{1}{c}{0.6} &  \multicolumn{1}{r}{30887.0} &  \multicolumn{1}{r}{\bf5852.8} &  \multicolumn{1}{r}{28530.1} &  \multicolumn{1}{r}{\bf5703.4}&  \multicolumn{1}{r}{16010.8} &  \multicolumn{1}{r}{\bf10585.8}\\ 

\multicolumn{1}{c}{0.7} &  \multicolumn{1}{r}{28530.1} &  \multicolumn{1}{r}{\bf8253.6} &  \multicolumn{1}{r}{36001.8} &  \multicolumn{1}{r}{\bf9324.9}&  \multicolumn{1}{r}{16009.5} &  \multicolumn{1}{r}{\bf10398.6}\\

\multicolumn{1}{c}{0.8} &  \multicolumn{1}{r}{36001.7} &  \multicolumn{1}{r}{\bf9128.0} &  \multicolumn{1}{r}{45878.8} &  \multicolumn{1}{r}{\bf6386.5}&  \multicolumn{1}{r}{3448.8} &  \multicolumn{1}{r}{\bf3439.0}\\

\multicolumn{1}{c}{0.9} &  \multicolumn{1}{r}{37737.0} &  \multicolumn{1}{r}{\bf8599.3} &  \multicolumn{1}{r}{41677.4} &  \multicolumn{1}{r}{\bf55359.8}&  \multicolumn{1}{r}{3450.1} &  \multicolumn{1}{r}{\bf3442.6}\\ 

\multicolumn{1}{c}{1.0} &  \multicolumn{1}{r}{41677.1} &  \multicolumn{1}{r}{\bf8218.1} &  \multicolumn{1}{r}{49149.1} &  \multicolumn{1}{r}{\bf 8481.3}&  \multicolumn{1}{r}{3451.4} &  \multicolumn{1}{r}{ 3475.0}\\ 

\multicolumn{1}{c}{1.1} &  \multicolumn{1}{r}{41677.8} &  \multicolumn{1}{r}{\bf3569.8} &  \multicolumn{1}{r}{53397.2} &  \multicolumn{1}{r}{\bf8553.6}&  \multicolumn{1}{r}{3452.6} &  \multicolumn{1}{r}{3478.2}\\ 

\multicolumn{1}{c}{1.2} &  \multicolumn{1}{r}{49727.6} &  \multicolumn{1}{r}{\bf8742.7} &  \multicolumn{1}{r}{55556.6} &  \multicolumn{1}{r}{\bf6792.0}&  \multicolumn{1}{r}{3453.9} &  \multicolumn{1}{r}{3495.1}\\ 

\multicolumn{1}{c}{1.3} &  \multicolumn{1}{r}{54932.1} &  \multicolumn{1}{r}{\bf10516.5} &  \multicolumn{1}{r}{65625.9} &  \multicolumn{1}{r}{\bf16875.0}&  \multicolumn{1}{r}{3455.2} &  \multicolumn{1}{r}{3530.3}\\ \bottomrule

\end{tabular}
}
\end{footnotesize}

\end{table}

    \section{Conclusion} \label{ch: conclusion}
    SDPs provide tight relaxations for discrete and nonlinear market-clearing problems, making them useful tools for assessing the marginal effect of demand on social welfare. Based on this perspective, we develop a tractable, SDP-based pricing scheme applicable to electricity markets modeled with both DCUC and ACUC.

    There are several promising future directions. Motivated by the increasing penetration of weather-dependent generation (e.g., solar and wind), it is useful to consider uncertainty in the market-clearing model and how it reshapes incentives. Our SDP-based framework is well-suited for such extensions, as common stochastic modeling tools, including chance constraints and distributionally robust optimization, also have conic programming representations. In addition, it would also be helpful to further improve the tractability of the SDP relaxation by exploiting its sparsity structure and by identifying strengthening valid inequalities.


    \FloatBarrier

\bibliographystyle{informs2014} 
\begingroup
    \setlength{\bibsep}{-0.5pt}
    \linespread{1}\selectfont
\bibliography{reference}
\endgroup

%
\newpage
\begin{APPENDIX}{Appendix}

\section{Proofs for Section \ref{ch: pricing_sdp}}\label{ch: proof}
    \proof{Proof of Proposition \ref{th: dual}}
    
    {\it Step 1:} We first show that {\cgreen $\mc{P}^\sdpbd(\theta)$} is feasible. By assumption, $\mc{P}^\mbqp$ is feasible and that Assumption \ref{ass: bound} holds. We fix $\theta$ and write $b_j(\theta)$ as $b_j$ {\cgreen and $\mc{P}^\sdpbd(\theta)$ as $\mc{P}^\sdpbd$} for concreteness. Let $\bm{x}$ be a feasible solution to $\mc{P}^\mbqp$. Define $\bm{X}=\bm{x}\bm{x}^\top$ and {\cgreen $\bm{Y} = \begin{bmatrix}1 & \vx^\top \\ \vx & \bm{X}\end{bmatrix}$}. Then, $\bm{Y}\succeq \bm{0}$ and $\bm{Y}\geq \bm{0}$ since $\bm{x}\geq \bm{0}$. Moreover, we have $\mathrm{tr}(\bm{a}_j \bm{a}_j ^\top \bm{X})=(\bm{a}_j^\top \bm{x})^2=b_j^2$, {\cgreen $x_i = X_{ii}$ (for binary variables), and $X_{ii}\le U_i^2$ (for continuous variables)}, and thus $(\bm{x}, \bm{X}, \bm{Y})$ is feasible in $\mc{P}^\sdpbd$. Hence, $\mc{P}^\sdpbd$ is feasible. \\
    
    \noindent{\it Step 2:} We now prove that the feasible region of $\mc{P}^\sdpbd$ is {\cgreen bounded}. By {\cgreen \eqref{eq: sdp_bd3}} we have $X_{ii} \leq U_{i}^2$ for each $i$ where $x_i$ is continuous. On the other hand, for $X_{ii}$ where $i \in \mathcal{B}$ we have $x_i=X_{ii}$. {\cgreen Since $\bm{Y}\succeq 0$, we also have $\begin{vmatrix}1 & x_i\\x_i & X_{ii}\end{vmatrix}\geq 0 \implies$} $X_{ii}\geq x_{i}^2$. {\cgreen Thus,} $x_{i}\geq x_{i}^2 \geq 0 \implies$ {\cgreen $x_i\in [0,1] \implies$} $X_{ii} \in [0, 1]$, and thus we also have the bound $X_{ii} \leq U_i:=1$ for $i \in \mathcal{B}$. Therefore, by the $2 \times 2$ minors constraints of $\bm{X}\succeq \bm{0}$, this implies that $X_{ij}^2 \leq X_{ii}X_{jj}\leq U_i^2U_j^2$, and hence $(\bm{x}, \bm{X}, \bm{Y})$ is bounded.

    {\cgreen A quick note that the feasible region is also closed and thus is compact, a fact that is useful for the proof of Proposition \ref{th: continuous}: The feasible region is closed } since it is the intersection of finitely many closed sets: {\cgreen closed convex cones and a set with finitely many linear equalities and inequalities}. Combined with the boundedness result shown above, the feasible region is thus compact.\\ 

    {\cgreen \noindent{\it Step 3: }We now prove that strong duality holds for a homogenized $\mc{P}^\sdpbd$. This step follows the same argument as the proof of Theorem 6.1 in \citep{brown2024copositive}, which proves the strong duality of a homogenized CPP reformulation of MBQP, by showing that the homogenized CPP reformulation's feasible region is nonempty, bounded, and consists of two cones and one hyperplane. Consequently, it has strong duality by Corollary 1.2 of \citep{kim2025strong}. 
    
    For our proof, the main difference is that $\mc{P}^\sdpbd$ no longer has the completely positive conic constraint, and instead has constraints \eqref{eq: sdp_bd3}, $Y\geq 0$, and $Y\in\mc{S}_{n+1}^+$, and thus feasible region of the corresponding homogenized problem consists of slightly different cones. Specifically, consider the following homogenized SDP relaxation of $\mc{P}^\mbqp$:
    \begin{subequations}\label{eq: hom_sdp}
    \begin{align}
    \mc{P}^\hom: \min~~&\tr(\bm{C} \Ytilde)\\
    \st ~~& \tr(\Atilde_j \Ytilde) = 0 && \forall j = 1,...,m && (\sigmatilde_j) \label{eq: hom_sdp_1}\\
    & \tr(\bm{B}_j \Ytilde) = 0 && \forall j\in\mc{B} && (\kappatilde_j)\label{eq: hom_sdp_2}\\
    & \tr(\Gtilde_j \Ytilde) \le 0 && \forall j\in [n]\sm\mc{B} && (\phitilde_j)\label{eq: hom_sdp_3}\\
    & \tr(\bm{H} \Ytilde) = 1 && && (\mutilde) \label{eq: hom_sdp_4}\\
    & \Ytilde \in \mc{S}^{n+1}_+ && && (\Omegatilde)\label{eq: hom_sdp_5}\\
    & \Ytilde \ge \vo && && (\Thetatilde), \label{eq: hom_sdp_6}
    \end{align}
    \end{subequations}
    where 
    \[
    \bm{C} = \begin{bmatrix} 0 & \vc^\top \\ \vc &  \bm{Q} \end{bmatrix}, \Atilde_j = \begin{bmatrix} b_j^2 & - b_j \va_j^\top \\ - b_j \va_j & \va_j \va_j^\top \end{bmatrix}, \bm{B}_j = \begin{bmatrix} 0 & \frac{1}{2}\ve_j^\top \\ \frac{1}{2}\ve_j & -\ve_j \ve_j^\top \end{bmatrix}, \Gtilde_j = \begin{bmatrix} -U_j^2 &  \vo^\top \\ \vo & \ve_j \ve_j^\top \end{bmatrix}, \bm{H} = \begin{bmatrix} 1 & \vo^\top \\ \vo & \vo \end{bmatrix}.
    \]
    
    Note that $\ve_j$ is a vector with 1 at the $j$th entry and 0's elsewhere.

    Following similar arguments as in Steps 1 and 2, the feasible region of $\mc{P}^\hom$ is nonempty and bounded. In addition, $\mc{P}^\hom$ has a hyperplane constraint \eqref{eq: hom_sdp_4} and two conic constraints \eqref{eq: hom_sdp_5} and $\Ytilde\in\kcone$, where $\kcone$ is the homogenized constraint cone defined by $\kcone := \{\Ytilde | \Ytilde~\text{satisfies}~\eqref{eq: hom_sdp_1}-\eqref{eq: hom_sdp_3}, \eqref{eq: hom_sdp_6}\}$. Therefore, by Corollary 1.2 of \citep{kim2025strong}, $\mc{P}^\hom$ has strong duality.\\

    \noindent{\it Step 4: }Unlike for CPPs where equivalence holds between the homogenized relaxation and the original formulation, for our SDP relaxation this is not true in general. Instead, we prove that $\opt(\mc{P}^\hom)\geq \opt(\mc{P}^\sdpbd)$. It suffices to show that any feasible solution $\Ytilde$ of $\mc{P}^\hom$ is also feasible to $\mc{P}^\sdpbd$. In particular, we show that $\Ytilde$ satisfies \eqref{eq: sdp_2} and \eqref{eq: sdp_3}, as other constraints are identical. 

    Denote $\Ytilde = \begin{bmatrix}1 & \vxtilde^\top \\ \vxtilde & \Xtilde \end{bmatrix}$. Since $\Ytilde\succeq 0$, its Schur complement $\Xtilde - \vxtilde \vxtilde^\top \succeq 0$. Thus, $\va_j^\top \Xtilde \va_j \ge (\va_j^\top \vxtilde)^2$. By \eqref{eq: hom_sdp_1}, $0 = \va_j^\top \Xtilde\va_j - 2b_j\va_j^\top \vxtilde + b_j^2$. Therefore,
    \[
    0 = \va_j^\top \Xtilde\va_j - 2b_j\va_j^\top \vxtilde + b_j^2 \ge (\va_j^\top \vxtilde)^2 - 2b_j\va_j^\top \vxtilde + b_j^2 = (\va_j^\top \vxtilde - b_j)^2 \Rightarrow \va_j^\top \vxtilde - b_j = 0.
    \]
    Thus, $\Ytilde$ satisfies \eqref{eq: sdp_2}. Substituting $\va_j^\top \vxtilde = b_j$ back to $0 = \va_j^\top \Xtilde\va_j - 2b_j\va_j^\top \vxtilde + b_j^2$, we have $\va_j^\top \Xtilde \va_j - b_j^2 = 0$, proving that $\Ytilde$ satisfies \eqref{eq: sdp_3}. 
    
    Therefore, $\Ytilde$ is feasible to $\mc{P}^\sdpbd$ and thus $\opt(\mc{P}^\hom)\geq \opt(\mc{P}^\sdpbd)$.\\

    \noindent{\it Step 5: }Finally, we prove that the optimal value of $\mc{P}^{\sdpdual}$ is no less than the optimal value of the dual for $\mc{P}^\hom$ (denoted as $\mc{P}^{\homdual}$), and thus strong duality between $\mc{P}^\hom$ and its dual ensures the same for $\mc{P}^{\sdpdual}$. The argument follows the proof of Theorem 6.2 in \cite{brown2024copositive} for CPPs, with modifications to account for the different dual formulation due to different primal constraints in SDPs. 
    
    The formulation of $\mc{P}^{\sdpdual}$ is as follows:
    \begin{subequations}\label{eq: sdpdual_full}
    \begin{align}
    \mc{P}^{\sdpdual}: \max~~& \sum_{j=1}^m b_j \gamma_j + \sum_{j=1}^m b^2_j \omega_j + \mu + \sum_{j\in [n]\sm \mc{B}} U_j^2 \phi_j\\
    \st ~~& M(\pmb \gamma, \pmb \omega, \pmb \kappa, \pmb \phi, \mu) = \bm{\Theta} + \bm{\Omega}\\
    & \pmb\phi\le \vo, \bm{\Theta} \geq \vo, \bm{\Omega}\in\mc{S}^{n+1}_+,
    \end{align}
    \end{subequations}
    where $M(\pmb \gamma, \pmb \omega, \pmb \kappa, \pmb \phi, \mu) = \bm{C} - \sum_{j=1}^m \bm{A}_j \gamma_j - \sum_{j=1}^m \hat{\bm{A}}_j \omega_j - \sum_{j\in\mc{B}} \bm{B}_j\kappa_j - \sum_{j\in [n]\sm\mc{B}} \bm{G}_j \phi_j - \bm{H}\mu$, with 
    \[
    \bm{A}_j = \begin{bmatrix} 0 & \frac{1}{2}\va_j^\top \\ \frac{1}{2}\va_j & \vo \end{bmatrix}, \hat{\bm{A}}_j = \begin{bmatrix} 0 & \vo^\top \\ \vo & \va_j \va_j^\top \end{bmatrix}, \bm{G}_j = \begin{bmatrix} 0 &  \vo^\top \\ \vo & \ve_j \ve_j^\top \end{bmatrix}.
    \]
    
    Also, $\mu$ denotes the dual variable of $\tr(\bm{H} \Ytilde) = 1$, which is a valid constraint for $\mc{P}^\sdpbd$.

    The formulation of $\mc{P}^{\homdual}$ is as follows:
    \begin{subequations}\label{eq: hom_dual}
    \begin{align}
    \mc{P}^{\homdual}: \max~~& \mutilde\\
    \st ~~& \tilde{M}(\pmb \sigmatilde, \pmb \kappatilde, \pmb \phitilde, \mutilde) = \Thetatilde + \Omegatilde\\
    & \pmb\phitilde\le \vo, \Thetatilde \geq \vo, \Omegatilde\in\mc{S}^{n+1}_+,
    \end{align}
    \end{subequations}
    where $\tilde{M}(\pmb \sigmatilde, \pmb \kappatilde, \pmb \phitilde, \mutilde) = \bm{C} - \sum_{j=1}^m \Atilde_j \sigmatilde_j - \sum_{j\in\mc{B}} \Btilde_j\kappatilde_j - \sum_{j\in [n]\sm\mc{B}} \Gtilde_j \phitilde_j - \bm{H}\mutilde$.

    To prove that $\opt(\mc{P}^{\sdpdual})\ge \opt(\mc{P}^{\homdual})$, it suffices to show that any feasible solution $(\pmb \sigmatilde, \pmb \kappatilde, \pmb \phitilde, \mutilde, \Thetatilde, \Omegatilde)$ of $\mc{P}^{\homdual}$ can be mapped to a feasible solution of $\mc{P}^{\sdpdual}$ with the same objective value. In $\mc{P}^{\sdpdual}$, let $\gamma_j = -2b_j\sigmatilde_j, \omega_j = \sigmatilde_j, \kappa_j = \kappatilde_j, \phi_j = \phitilde_j, \Theta = \Thetatilde, \Omega = \Omegatilde$, and $\mu = \mutilde + \sum_{j=1}^m b_j^2 \sigmatilde_j - \sum_{j\in [n]\sm\mc{B}} U^2_j \phitilde_j$. Then we can verify that this solution leads to an objective value of $\mutilde$ and is feasible to $\mc{P}^{\sdpdual}$. For feasibility, it is useful to note that $\Atilde_j = \hat{\bm{A}}_j - 2b_j \bm{A}_j + b_j^2 \bm{H}$ and $\Gtilde_j = \bm{G}_j - U_j^2 \bm{H}$. 
    
    Therefore, $\opt(\mc{P}^{\sdpdual})\ge \opt(\mc{P}^{\homdual})$. Combined with $\opt(\mc{P}^\hom) = \opt(\mc{P}^{\homdual})$ (Step 3), $\opt(\mc{P}^\hom)\geq \opt(\mc{P}^\sdpbd)$ (Step 4), and $\opt(\mc{P}^{\sdpdual}) \le \opt(\mc{P}^{\sdpbd})$ (weak duality), we conclude that strong duality holds for $\mc{P}^\sdpbd$.
    }
    \qed
    \endproof
\begin{proof}{Proof of Theorem \ref{prop:envelopetheorem1}}
    {Fix $\theta \in [\underline{\theta}, \overline{\theta}] $ and let $\Psi^*\in \mathrm{Feas}^*(P^{\mathrm{SDP\text{-}dual}}(\theta))$ be any dual optimal solution. Then, since strong duality is attained (Proposition \ref{th: dual}), we have
\[
V^{\mathrm{SDP}}(\theta) \;=\; V^{\mathrm{SDP\text{-}dual}}(\theta) \;=\; f^d(\Psi^*,\theta),
\qquad
V^{\mathrm{SDP}}(\theta') \;=\; \max_{\Psi\in \feas(\psdpdual(\theta))} f^d(\Psi,\theta')
\]
for every $\theta' \in [\underline{\theta}, \overline{\theta}]$. We emphasize that since $\theta$ appears in the right-hand sides of $\psdpbd(\theta)$, the dual feasible set, $\feas(\psdpdual(\theta))$, does not depend on $\theta$. Therefore, \citet[Theorem 1]{milgrom2002envelope} applies to $V^{\mathrm{SDP\text{-}dual}}(\theta)$ and yields the envelope inequalities for one-sided derivatives. In particular, we have:

\paragraph{Right derivative.}
Let $\theta'>\theta$.
Since $\Psi^*$ is feasible for the dual at $\theta'$, we have
\[
V^{\mathrm{SDP}}(\theta') \;=\; \max_{\Psi\in \feas(\psdpdual(\theta))} f^d(\Psi,\theta') \;\ge\; f^d(\Psi^*,\theta').
\]
Subtracting $V^{\mathrm{SDP}}(\theta)=f^d(\Psi^*,\theta)$ and dividing by $\theta'-\theta>0$ yields
\[
\frac{V^{\mathrm{SDP}}(\theta')-V^{\mathrm{SDP}}(\theta)}{\theta'-\theta}
\;\ge\;
\frac{f^d(\Psi^*,\theta')-f^d(\Psi^*,\theta)}{\theta'-\theta}.
\]
Since $b_j(\theta), j=1,...,m$ are differentiable and $f^d(\Psi^*,\theta)$ is a polynomial in $b_j$'s, $f_{\theta}^d(\Psi^*,\theta)$ exists.  Taking $\theta'\rightarrow\theta+$ at $\theta$ gives
\[
V^{\mathrm{SDP}'}(\theta+) \;\ge\; f_{\theta}^d(\Psi^*,\theta).
\]

\paragraph{Left derivative.}
Let $\theta'<\theta$.
Again, $V^{\mathrm{SDP}}(\theta')\ge f^d(\Psi^*,\theta')$ and $V^{\mathrm{SDP}}(\theta)= f^d(\Psi^*,\theta)$ imply
\[
V^{\mathrm{SDP}}(\theta)-V^{\mathrm{SDP}}(\theta')
\;\le\;
f^d(\Psi^*,\theta)-f^d(\Psi^*,\theta').
\]
Dividing by $\theta-\theta'>0$ and taking $\theta'\rightarrow\theta-$ yields
\[
V^{\mathrm{SDP}'}(\theta-) \;\le\; f_{\theta}^d(\Psi^*,\theta).
\]

\paragraph{Differentiable points.}
If $V^{\mathrm{SDP}}$ is differentiable at $\theta$, then $V^{\mathrm{SDP}'}(\theta-)=V^{\mathrm{SDP}'}(\theta+)=V^{\mathrm{SDP}'}(\theta)$.
Combining the two inequalities above gives
\[
V^{\mathrm{SDP}'}(\theta)=f_{\theta}^d(\Psi^*,\theta). \quad \qed
\]}

   \end{proof}

\begin{proof}{Proof of Corollary \ref{cor:vector-envelope}}
First, due to strong duality of $\psdpbd$, we have $V(\vu)=\\ \max_{\Psi\in\mathrm{Feas}(P^{\mathrm{SDP\text{-}dual}}(\vu))} f^d(\Psi,\vu)$. Since $f^d(\Psi^*,\vu)$ is a polynomial in $b_j, j=1,...,m$, and $b_j(\vu)'s$ are differentiable in $\vu$, $\nabla_{\vu} f^d(\Psi^*,\vu)$ exists. 

To obtain the expression for $\nabla V(\vu)$, we first consider a perturbation along one direction. Fix $\vu=\vuhat\in\mathcal U$. Let $\vh\in\mathbb R^d$ be any direction and let $\underline{\varepsilon}, \overline{\varepsilon} >0$ be such that
$\vuhat+\varepsilon \vh\in\mathcal U$ for all $\varepsilon\in[-\underline{\varepsilon},\overline{\varepsilon}]$.
Define the one-dimensional value function along $\vh$ by
$\phi(\varepsilon):=V(\vuhat+\varepsilon \vh)$. If $V(\vuhat)$ is differentiable at $\vuhat$, then $\phi(\varepsilon)$ is differentiable at $\varepsilon = 0$. 
Also, if $V(\vuhat)$ is differentiable at $\vuhat$, then for every direction $\vh$ we have 
\begin{align}\label{eq: phi_0}
\phi'(0)=\nabla V(\vuhat)^\top \vh.
\end{align}



Define $\vu(\varepsilon):=\vuhat+\varepsilon \vh$. The dual objective function can be written as $f^d(\Psi,\vu(\varepsilon))$. By Theorem \ref{prop:envelopetheorem1} and the chain rule, if $V(\vuhat)$ is differentiable at $\vuhat$, then 
\begin{align}\label{eq: phi}
\phi'(0) = f^d_{\varepsilon}(\Psi^*,\vu(\varepsilon))\Big|_{\varepsilon=0}
=\nabla_{\vu} f^d(\Psi^*,\vu)^\top \vh.
\end{align}

Combining \eqref{eq: phi_0} and \eqref{eq: phi} for all $\vh$ yields $\nabla V(\vuhat)=\nabla_{\vu} f^d(\Psi^*,\vuhat)$.
\qed
\end{proof}
    \proof{Proof of Proposition \ref{th: foc_d}}
    The price $r^d_{kt}$ is obtained by taking the partial derivative of $f^{\rm uc-d}(\Psi^*, D_{kt})$ with respect to $D_{kt}$. Since we will take the derivative with respect to $D_{kt}$ and the constant terms do not impact the derivative, we only write the terms of $f^{\rm uc-d}(\Psi^*, D_{kt})$ that include $D_{kt}$ for ease of exposition: 

    \begin{align} \label{dkt_deriv_fixed_k}
        &\lambda^*_t D_{kt} + \hat{\lambda}^*_t D_{kt}^2 + 2\hat{\lambda}^*_t\sum\limits_{\substack{k' \in \mc{N}\\ k' \neq k }}D_{kt}D_{k't}  + \sum\limits_{(i,j)\in \mc{L}}B_{ij}(S_{ik} - S_{jk}) (\xi_{ijt}^{\min*} + \xi_{ijt}^{\max*})D_{kt} \nonumber\\
        & + \sum\limits_{(i,j) \in \mc{L}} B_{ij}^2 (S_{ik} - S_{jk})^2 D_{kt}^2(\hat{\xi}_{ijt}^{\min*} + \hat{\xi}_{ijt}^{\max*}) \nonumber\\
        & + 2\sum\limits_{(i,j) \in \mc{L}} B_{ij}^2 \left(\sum\limits_{\substack{k' \in \mc{N}\\ k' \neq k }}(S_{ik} - S_{jk})(S_{ik'} - S_{jk'})D_{k't}D_{kt}\right) (\hat{\xi}_{ijt}^{\min*} + \hat{\xi}_{ijt}^{\max*}) \nonumber\\
        & +2\sum\limits_{(i,j) \in \mc{L}} B_{ij}(S_{ik} - S_{jk})(P^{\text{Trans},\min}_{ij}\hat{\xi}^{\min*}_{ijt} + P^{\text{Trans},\max}_{ij}\hat{\xi}^{\max*}_{ijt})D_{kt},  
    \end{align}
    where the terms are obtained by multiplying the right-hand side of relevant parameter in \eqref{eq: uc_sdp} with corresponding dual variables. Specifically, the first term is from \eqref{eq: uc_sdp_1}, the second and third terms are from \eqref{eq: uc_sdp_5}, the fourth term is from \eqref{eq: uc_sdp_2} and \eqref{eq: uc_sdp_3}, and the last three terms are from \eqref{eq: uc_sdp_6} and \eqref{eq: uc_sdp_7}.  
    
    Taking the derivative of \eqref{dkt_deriv_fixed_k} with respect to $D_{kt}$, we get \eqref{eq: foc_d}.  \qed\endproof
\begin{proof}{Proof of Corollary \ref{cor:multi-demand-abscont}}
The proof follows similar steps as the proof of Proposition \ref{th: continuous}, with $\theta$ as the scalar parameter. Namely, let $\bm{w}$ collect all primal decision variables of $\uc^\sdpbd((D(\theta))$ (including lifted matrix
entries), and rewrite the objective as $\vc^\top \vw$.
Then, the analogue of the set $S$ in the proof of Proposition \ref{th: continuous},
\[
S:=\{(\theta,w,z)\in \R\times\mathbb{R}^p\times\mathbb{R}:\ \theta\in [\underline\theta,\overline\theta], w\in\mathrm{Feas}(D(\theta)),\ z=\vc^\top \vw\},
\]
is semialgebraic, and so is its projection onto $(\theta,z)$.
Analogously to the proof of Proposition \ref{th: continuous}, this implies $V^{\mathrm{UC\text{-}SDP}}(D(\theta))\bigr)$
is a semialgebraic function. The rest of the proof follows the same argument as {\cgreen Steps 2} in the proof of Proposition \ref{th: continuous}. 
\qed
\end{proof}
\section{Proofs for Section \ref{ch: ac}}\label{ch: proof_2}
\proof{Proof of Proposition \ref{th: acuc_strong_dual}}
    Our proof follows a similar argument to the proof of Proposition \ref{th: dual}.

    First, similarly to the argument in Proposition \ref{th: dual}, if the original ACUC problem $\ac$ is feasible, then so is $\ac^\sdp$, via a rank-one lifting argument. 
    
    Second, since the diagonal entries of $\bm{W}_t$'s are nonnegative and bounded by \eqref{eq: rectangular_5}, and any off-diagonal entries $W_{ijt}^2\leq W_{iit}W_{jjt}$ (due to nonnegativity of $2\times 2$ principal minors for $W_t\succeq 0$), entries of $\bm{W}_t$'s are bounded. In addition, other variables are also bounded following the same argument in Proposition \ref{th: dual}. 
    The feasible region is also closed, since it is the intersection of finitely many closed sets: {\cgreen closed convex cones and a set with finitely many linear equalities and inequalities}. Combined with the boundedness result shown above, the feasible region is thus compact.
        
    {\cgreen Following similar arguments as Steps 3 of Proposition \ref{th: dual}'s proof, we can formulate a homogenized SDP relaxation with a block-diagonal matrix that includes all variables. The feasible region is nonempty, closed, and consists of one hyperplane (i.e., $\Ytilde_{11} = 1$) and two cones. Specifically, one of the cones is defined by all conic constraints, and the other is defined by all homogenized linear constraints. Thus, strong duality still follows for this homogenized SDP relaxation. 

    The optimal objective of the homogenized problem is still no less that that of the original SDP relaxation. The feasible region of the homogenized problem is contained in that of the original problem, as the homogenized constraints reduce to the original ones.

    Finally, any feasible dual solution of the homogenized problem can be mapped to a feasible dual solution of $\ac^\sdp$ with the same objective value by (1) keeping the dual variables associated with shared constraints unchanged, and (2) absorbing the constant terms introduced by homogenization into the multiplier of the hyperplane constraint $\Ytilde_{11} = 1$. Therefore, the optimal solution of $\ac^\sdp$'s dual is no less than that of the homogenized problem's dual. Combining this with previous results and with weak duality for $\ac^\sdp$ yields strong duality for $\ac^\sdp$ and its dual.}
    \qed
    \endproof
    \proof{Proof of Proposition \ref{th: acuc_price}}{
    Observe that the parameter $D_{kt}$ appears only on the right-hand sides of \eqref{eq: ac_sdp_2} and \eqref{eq: ac_sdp_4}. Hence, in the dual objective, the dependence on $D_{kt}$ appears entirely  through the terms
    \[
    \lambda^{R*}_{kt}D^R_{kt}+\lambda^{I*}_{k't}D^I_{k't}+
    \hat\lambda^{R*}_t\Bigl(\sum_{k'\in N}D^R_{k't}\Bigr)^2,
    \]
    up to an additive constant independent of $D_{kt}$. Thus, differentiating with respect to $D^R_{kt}$ gives
    $r^{d\text{-}R}_{kt}=\lambda^{R*}_{kt}+2\hat\lambda^{R*}_t\sum_{k'\in N}D^R_{k't}$,
    and differentiating with respect to $D^I_{kt}$ gives $r^{d\text{-}I}_{kt}=\lambda^{I*}_{kt}$. 
    \qed}
    \endproof
\section{Proofs for Section \ref{ch: loc}}\label{ch: proof_3}
\proof{Proof of Lemma \ref{th: conjugate}}
    We have:
    \begin{align*}
        V^c(\vpi) := &\sup_{\vD\in\R^{|\mc{I}|}} \left\{\vpi^\top \vD - V(\vD)\right\} \\
         = & \sup_{\vD\in\R^{|\mc{I}|}} \left\{\vpi^\top \vD + \max_{\substack{\vx_g\in \X_g, \forall g\in\G;\\ \vr\in\mc{R};\\ \bm{A}^s \vs - \vr = \vD}} - \sum_{g\in\G} C_g(\vx_g)\right\}\\
         = & \sup_{\vD\in\R^{|\mc{I}|}} \max_{\substack{\vx_g\in \X_g, \forall g\in\G;\\ \vr\in\mc{R};\\ \bm{A}^s \vs - \vr = \vD}}\left\{\vpi^\top \vD - \sum_{g\in\G} C_g(\vx_g)\right\}\\
         = & \sup_{\vD\in\R^{|\mc{I}|}} \max_{\substack{\vx_g\in \X_g, \forall g\in\G;\\ \vr\in\mc{R};\\ \bm{A}^s \vs - \vr = \vD}}\left\{\vpi^\top (\bm{A}^s \vs - \vr) - \sum_{g\in\G} C_g(\vx_g)\right\}\\
         = & \sup_{\substack{\vx_g\in \X_g, \forall g\in\G;\\ \vr\in\mc{R}}} \left\{\vpi^\top (\bm{A}^s \vs - \vr) - \sum_{g\in\G} C_g(\vx_g)\right\}\\
         = & \sup_{\vx_g\in \X_g, \forall g\in\G}\left(\vpi^\top \bm{A}^s \vs - \sum_{g\in\G} C_g(\vx_g)\right)+\sup_{\vr\in\mc{R}} \{-\vpi^\top \vr\}\\
         = & \sum_{g\in\G} \max_{\vx_g\in\X_g}\left(\pi_{k(g)}^\top \vs_g - C_g(\vx_g)\right) + \sup_{\vr\in\mc{R}} \{-\vpi^\top \vr\},
    \end{align*}
    where the second equality follows from the definition of $V(\vD)$. The third equality holds because $\vpi^\top \vD$ is constant with respect to the inner maximization problem. The fourth equality is due to the constraint $\bm{A}^s \vs - \vr = \vD$. 

    The fifth equality holds because the outer supremum is taken over \(\vD \in \mathbb{R}^{|\mc{I}|}\), and \(\vD = \bm{A}^s \vs - \vr\). Thus, taking the supremum over all feasible \(\vx_g \in \X_g\) for all \(g \in \G\) and \(\vr \in \mc{R}\) implicitly spans all values of \(\vD\) that can possibly lead to a supremum, i.e., those $\vD$ corresponding to feasible solutions. Any \(\vD\) outside this range would correspond to an infeasible inner maximization problem with a value of $-\infty$.

    The sixth equality is due to the feasible regions \(\vx_g \in \X_g, \forall g\in\G\) and $\vr\in\mc{R}$ being separable. The final equality holds because, for each bus $k\in\N$ and time $t\in\T$, the term $[\bm{A}^s \vs]_{(kt)}$ represents the total generation at bus $k$, obtained by summing the production from all generators located at that bus. Thus, $\vpi^\top \bm{A}^s \vs = \sum_{g\in\G} \pi_{k(g)}^\top \vs_g$, which corresponds to the total revenue from the production of all generators. This argument remains valid in the AC setting, where $\bm{A}^s \vs$ includes both real and reactive power generation.
    \qed
    \endproof
    \proof{Proof of Corollary \ref{th: loc_eq}}
    The bound is tight if the two inequalities of \eqref{eq: loc_proof} in the proof of Theorem \ref{th: loc} become equalities. The first inequality is due to the order-reversing property of the conjugate, and thus becomes equality when $V^c(\vpi)=(\vrel)^c(\vpi)$. 
    
    The second inequality follows from $\sup_{\vr\in\mc{R}}\{-\vpi^\top \vr\} - (-\vpi^\top \vrfeas)\geq 0$. When there is no congestion, the flow constraints in $\mc{R}$ become redundant, allowing \eqref{eq: min_cost} to be reformulated without $\vr$ terms and constraints \eqref{eq: min_cost} to be aggregated across all buses. As a result, the $\vr$ and $\vrfeas$ terms are dropped from \eqref{eq: loc_proof}, eliminating the second inequality.\qed
    \endproof
\section{Triangle Inequalities}\label{ch: ineq}
In our experiments, the following triangle inequalities are imposed on binary variables $z_{gt}$ ($\forall g_1,g_2,g_3 \in \mc{G}$ and $t\in \mc{T}$): 
 \begin{align}
     &z_{g_1t}z_{g_2t} + z_{g_3t}\geq z_{g_1t}z_{g_3t} + z_{g_2t}z_{g_3t} \label{triangle1} \\ 
     &z_{g_1t}z_{g_2t} + z_{g_1t}z_{g_3t} + z_{g_2t}z_{g_3t} + 1 \geq z_{g_1t} + z_{g_2t} + z_{g_3t}. \label{triangle2}
 \end{align}
 
 Since those inequalities contain bilinear terms, we linearize them by replacing the bilinear terms with corresponding lifted terms.

%

\end{APPENDIX}
%
%






%
%
%

\end{document}